\pgfplotsset{compat=1.17}
\numberwithin{equation}{section}
\newcounter{dummy} \numberwithin{dummy}{section}
\newtheorem{corollary}[dummy]{Corollary}
\newtheorem{proposition}[dummy]{Proposition}
\newtheorem{theorem}[dummy]{Theorem}
\newtheorem{definition}[dummy]{Definition}
\newtheorem{lemma}[dummy]{Lemma}
\newtheorem*{remark}{Remark}
\newcommand{\N}{\ensuremath{\mathbb{N}}}
\newcommand{\R}{\ensuremath{\mathbb{R}}}
\newcommand{\Z}{\ensuremath{\mathbb{Z}}}
\newcommand{\Q}{\ensuremath{\mathbb{Q}}}
\newcommand{\PP}{\ensuremath{\mathbb{P}}}
\newcommand{\floor}[1]{\lfloor #1 \rfloor}
\newcommand{\ceil}[1]{\lceil #1 \rceil}
\newcommand{\supp}[1]{\mathrm{supp}_{-\infty}(#1)}
\newcommand{\norm}[1]{\left\lVert#1\right\rVert}
 \newcommand\restrict[2]{
   \left.\kern-\nulldelimiterspace 
   #1
   \littletaller 
   \right|_{#2}%
   }
 \newcommand{\littletaller}{\mathchoice{\vphantom{\big|}}{}{}{}}
 \newcommand*{\scaleddelims}[3]{%
   \ensuremath{%
     \mathpalette{\@scaleddelims{#1}{#2}}{#3}%
   }%
 }   
 \newcommand*{\@scaleddelims}[4]{%
   \begingroup
     #3%
     \sbox0{$\m@th#3\vphantom{A}#4$}%
     \setbox2\vbox{\hbox{$\m@th#3#1$}\kern\z@}%
     \setbox4\vbox{\hbox{$\m@th#3#2$}\kern\z@}%
     \setbox6\hbox{$#3\vcenter{}$}%
     \ifx\downharpoonleft#1\relax  
       \let\DelimLeft=L%
     \else\ifx\upharpoonleft#1%
       \let\DelimLeft=L%
     \else\ifx\downharpoonright#1%
       \let\DelimLeft=R%
     \else\ifx\upharpoonright#1%
       \let\DelimLeft=R%
     \fi\fi\fi\fi
     \ifx\downharpoonleft#2\relax
       \let\DelimRight=L%
     \else\ifx\upharpoonleft#2\relax
       \let\DelimRight=L%
     \else\ifx\downharpoonright#2\relax
       \let\DelimRight=R%
     \else\ifx\upharpoonright#2\relax
       \let\DelimRight=R%
     \fi\fi\fi\fi
     \ifx\DelimLeft L%
       \wd2=.6\wd2
     \fi
     \ifx\DelimRight L%
       \wd4=.6\wd4
     \fi
     \ifx\DelimLeft R%
       \sbox2{\kern-.4\wd2\box2}%
     \fi
     \ifx\DelimRight R%
       \sbox4{\kern-.4\wd4\box4}%
     \fi
     \dimen0=\ht0 %
     \advance\dimen0 by -\ht6 %
     \dimen2=\dp0 %
     \advance\dimen2 by \ht6 %
     \ifdim\dimen2>\dimen0 %
       \dimen0=\dimen2 %
     \else
       \dimen0=\dimen0 %
     \fi
     \dimen2=\ht6 %
     \advance\dimen2 by -\dimen0 %
     \dimen0=2\dimen0 %
     \def\DelimCorr{%
       \mskip.5\thinmuskip
       \nonscript\mskip.5\thinmuskip
     }%
     \mathopen{%
       \ifx\DelimLeft R\DelimCorr\fi
       \raisebox{\dimen2}{\resizebox{!}{\dimen0}{\box2}}%
       \ifx\DelimLeft L\DelimCorr\fi
     }%
     \begingroup
       #3#4%
     \endgroup
     \mathclose{%
       \ifx\DelimRight R\DelimCorr\fi
       \raisebox{\dimen2}{\resizebox{!}{\dimen0}{\box4}}%
       \ifx\DelimRight L\DelimCorr\fi
     }%
   \endgroup
 }\makeatother
 \newcommand{\restr}[2]{#1\scaleddelims{\kern-0.5\nulldelimiterspace\upharpoonright}{\vphantom{.}}{_{#2}}}
\newcommand{\diff}{\ensuremath{\operatorname{d}\!}}
\newcommand{\customlabel}[2]{%
   #2\def\@currentlabel{#2}\label{#1}%
}
\title{Quantitative Brownian regularity of the KPZ fixed point with arbitrary initial data}
\author{Pantelis Tassopoulos}
\address{Department of Pure Mathematics and Mathematical Statistics, 
University of Cambridge, Cambridge, United Kingdom}
\email{pkt28@cam.ac.uk}
\author{Sourav Sarkar}
\address{Department of Pure Mathematics and Mathematical Statistics, 
University of Cambridge, Cambridge, United Kingdom}
\email{ss2871@cam.ac.uk}
\begin{document}

\subjclass[2010]{$82B23$, $82C22$ and $60H15$}
\date{}

\begin{abstract}
We show that the spatial increments of the KPZ fixed point starting from arbitrary initial data, exhibit strong quantitative comparison against rate two Brownian motion on compacts. The above estimates are uniform in the initial data supported in some compact set. 
As applications, we obtain a one-sided large deviation inequality for spatial increments of the KPZ fixed point and show that the Wiener density of the centred KPZ fixed point started from arbitrary initial data has finite entropy.
\color{black}
\end{abstract}

\maketitle

\tableofcontents

\section{Introduction}\label{intro}

In 1986, Kardar, Parisi and Zhang \cite{kardar1986dynamic} predicted that many planar random growth processes possess universal scaling behaviour. In particular, models in the KPZ universality class have an analogue of the height function which is conjectured to converge at large time and small length scales under the KPZ $1:2:3$ scaling (i.e. $h(t, x)\mapsto \varepsilon h(\varepsilon^{-3}t, \varepsilon^{-2}x)$, as $\varepsilon \searrow 0$) to a universal object $\mathfrak{h}_t(\cdot)$ called the KPZ fixed point. Matetski-Quastel-Remenik \cite{matetski2016kpz} constructed the KPZ fixed point as a Markov process in $t$, and they showed that it is a limit of the height function evolution of the totally asymmetric simple exclusion process (TASEP) with arbitrary initial condition. Later in \cite{nica2020one}, Nica-Quastel-Remenik constructed the KPZ fixed point as a scaling limit of Brownian last passage percolation (LPP). For an introduction to the KPZ universality class, see \cite{quastel2011introduction}, \cite{ferrari2010random}, \cite{romik2015surprising}, \cite{corwin2016kardar}, \cite{weiss2017reflected}, \cite{ganguly2021random} and \cite{ZygourasalgKPZ}.

The directed landscape $\mathcal L $ was constructed from Brownian last passage percolation (BLPP) in \cite{DOV} as a four-parameter scaling limit of the Brownian last passage value from different spatial locations and curves in the Brownian environment. It is conjectured to be the full scaling limit of all KPZ models. Reinforcing this claim, in \cite{dauvergne2025characterizationdirectedlandscapekpz}, the authors provide a general framework for proving convergence to the directed landscape and apply it to a range of models, proving convergence. It is a random continuous function from $$
\R^4_\uparrow = \{(p; q) = (x, s; y, t) \in \R^4 : s < t\}
$$
to $\R$. They showed that the KPZ fixed point also admits a variational formula in terms of the directed landscape, where for initial data $h_0:\mathbb R \to \mathbb R\cup \{-\infty\}$ the KPZ fixed point can be expressed as
\begin{equation*}\mathfrak{h}_t(y) = \sup_{x\in \R} (h_0(x) + \mathcal{L}(x, 0; y, t))\,,\end{equation*}
for all $y\in \R$ almost surely. This, and the metric composition law inherited from the Brownian LPP, means the directed landscape can be interpreted as a stochastic semi-group. For the narrow wedge initial condition, $h_0(0)=0$ and $h_0(x)=-\infty$ elsewhere, $\mathfrak{h}_1(\cdot)=\mathcal{A}_1(\cdot)$ is the parabolic Airy$_2$ process, that is the top line of the Airy line ensemble. For $h_0\equiv 0$, the flat initial condition, $\mathfrak{h}_1(\cdot)$ is called the Airy$_1$ process. 

The directed landscape at unit time $\mathcal{L}(\cdot, 0; \cdot, 1)$, is also called the \emph{Airy sheet}, and denoted by $\mathcal{S}(\cdot\,, \,\cdot)$. In \cite{DOV}, the authors obtain a coupling between the Airy sheet and the differences in last passage values with respect to the Airy line ensemble. 

Sarkar and Vir\'ag show in \cite{sarkar2021brownian}, that the spatial increments of the KPZ fixed point at any fixed time for general initial data are absolutely continuous with respect to Brownian motion on compacts. One would like to know for which $p\in (1,\infty)$, the Radon-Nikodym derivative of spatial increments of the KPZ fixed point is in $L^p$. In fact, in Conjecture $1.3$ in \cite{hammond2019patchwork}, Hammond conjectures that the Radon-Nikodym derivative of the KPZ fixed point started from arbitrary initial data is in $L^p$ for all $p>1$. This would be a desirable property to have since it would quantitatively strengthen the relationship between low-probability events of Brownian motion and that of the KPZ fixed point and give a way to bound the KPZ fixed point probabilities of events by the corresponding probabilities for Brownian motions (which are easy to compute). This will have a plethora of applications, including tail bounds on the fluctuations of the KPZ fixed point, occurrence of near-maxima at some distance from the unique maximizer of the KPZ fixed point in a compact interval, bounding the number of such near maxima etc. (see, for example, \cite{hammond2019patchwork} for some of these applications for the Airy$_2$ process, all of which will then extend to the KPZ fixed point with general initial data). 

Our result in this paper is an interpolation, (in the sense of Corollary \ref{cor: interpolation orlicz},) between \cite[Theorem 1.2]{sarkar2021brownian} and the above conjecture (Conjecture $1.3$ in \cite{hammond2019patchwork}).

More generally, in the setting of two finite measures $\nu\ll\mu$ ($\mu$ absolutely continuous with respect to $\nu$), one wants if possible to quantify the relationship between the $\delta>0$ and $\varepsilon>0$ so that the implication $\mu(A)<\delta$ guarantees $\nu(A)<\varepsilon$ for all measurable $A$\footnote{Recall the definition of absolute continuity of measures $\nu$ with respect to $\mu$, namely, that for all $\varepsilon>0$, there exists $\delta>0$ such that for all $A$ measurable, if $\mu(A)<\delta$, then $\nu(A)<\varepsilon$.}. This can be achieved if, for instance, one imposes that the Radon-Nikodym derivative $\diff\nu/\diff\mu\in L^p (
\mu)$, for some $p>1$. Then, for $A$ measurable, 
\begin{equation}\label{eq: rn measure bnd}
\nu(A) = \displaystyle \int_{A}\frac{\diff\nu}{\diff\mu}\diff\mu
\leq \Bigl(\int_{A}\Bigl(\frac{\diff\nu}{\diff\mu}\Bigr)^pd\mu\Bigr)^{\frac{1}{p}}(\mu(A))^{\frac{p-1}{p}}
= \left\lVert\frac{\diff\nu}{\diff\mu}\right\rVert_{L^p(\mu)}(\mu(A))^{1-\frac{1}{p}} \,,
\end{equation}
by applying H\"{o}lder's inequality. One can also easily verify that the above inequality is also sufficient to deduce that the Radon-Nikodym derivative exists and $\diff\nu/\diff\mu\in L^{p-}$ \footnote{Indeed, if \eqref{eq: rn measure bnd} holds for some $c> 0$, then we have by Markov's inequality for all $t> 0$, $t\mu(\diff\nu/\diff \mu \ge t)\le \int\diff\nu/\diff \mu\cdot \mathbf{1}(\diff\nu/\diff \mu\ge t)\diff \mu= \nu(\diff\nu/\diff \mu\ge t)\le c(\mu(\diff\nu/\diff \mu\ge t))^{1-\frac{1}{p}}$ and so $\mu(\diff\nu/\diff \mu \ge t)\le c'/t^p$. }. One can relax this type of inequality and impose the following comparison of two measures for all $A$ measurable (in an appropriate measure space) for some Borel function $f:\R_{\ge 0}\to \R_{\ge 0}$ satisfying $\lim_{t \searrow 0}f(t) = 0$, 
\begin{equation}\label{eq: quantitative br reg}
\nu(A) = O(f(\mu(A)))\,.
\end{equation}
When $\mu$ is replaced with various restrictions of the Wiener measure on compacts, we will call this type of estimate a form of \emph{quantitative Brownian regularity} with \emph{rate function} $f$.

In this paper, we prove a form of quantitative Brownian regularity of the KPZ fixed point with a certain rate function $f$ as defined below in Theorem~\ref{thm: main informal}. Such quantitative control on the Radon-Nikodym derivative requires much more detailed analysis of the geometry of the geodesics and many novel ideas and techniques than what is needed to prove just the absolute continuity of the KPZ fixed point. In page 3 in \cite{sarkar2021brownian}, the authors point out that stronger control on the Radon-Nikodym derivative will require new ideas and estimates. Indeed,
except for the use of the Brownian Gibbs property (in fact, we need its strong formulation from the recent paper \cite{dauvergne2024wienerdensitiesairyline}) and the coupling between the Airy sheet and Airy line ensemble, our proof of Theorem~\ref{thm: main informal} in this paper proceeds along very different lines to that in \cite{sarkar2021brownian}. 

Before proceeding further, we need to discuss a bit about the initial condition of the KPZ fixed point. For $t>0$, recall the definition of \emph{$t$-finitary} initial data from \cite{sarkar2021brownian}.
\begin{definition}($t$-finitary)\label{def: finitary}
    Let $h_0:\R\to \R \cup \{-\infty\}$ be a locally bounded measurable function such that
    \begin{equation*}
    \displaystyle\lim_{|x|\to \infty}\frac{h_0(x) - x^2/t}{|x|} = -\infty\,.
    \end{equation*}
\end{definition}

This condition on the initial data (for any $t>0$ fixed) is both necessary and sufficient to guarantee that the KPZ fixed point (at time $t>0$) is globally finite, see \cite[Proposition 6.1]{sarkar2021brownian}.

Now we state the main result of the paper informally. See Theorem~\ref{thm: KPZ reg finitary} for the proper statement of the result. 
{
\begin{theorem}[Quantitative Brownian regularity]\label{thm: main informal}
The spatial increments on a compact interval of the KPZ fixed point at time $1$ started from arbitrary (finitary in the above sense) initial data, exhibit a form of quantitative Brownian regularity with rate function (as defined in \eqref{eq: quantitative br reg}) of the form
\begin{equation*}
f(\mu(A))=\exp\left(-d\log^{r}\big(1/\mu(A)\big)\right)\,, 
\end{equation*}
for all $A$ Borel measurable sets on paths and some positive constants  $d>0, r\in (0,1)$, where $\mu$ denotes an appropriate restriction of the rate two Wiener measure. 
\end{theorem}
}

Moreover, the above rate function turns out to be \textbf{uniform} in the class of initial data $h_0$ supported on some fixed compact set. We will give some applications of the uniform quantitative Brownian regularity in Section \ref{sec: applications}. We give a one-sided large deviation inequality, uniform integrability of Radon-Nikodym derivatives and the integrability of some super-linearly growing convex function of the Radon-Nikodym derivatives, which in particular implies finiteness of entropy.

We believe that the bounds in Theorem~\ref{thm: main informal} can be improved to the point where the exponents $r>1$ and $d>0$ can be tuned appropriately to reduce to the type of bound as in (\ref{eq: rn measure bnd}), which would give the conjectured $L^{\infty-}(\mu)$ control.

The variational characterisation of the KPZ fixed point and the coupling of the Airy sheet with the Airy line ensemble and the Brownian Gibbs property enjoyed by the Airy line ensemble, together imply that a question on Brownian absolute continuity of the KPZ fixed point can ultimately be transferred to that of an `inhomogeneous' Brownian LPP, see Definition \ref{def: inhom brownian lpp}. This was done in \cite[Theorem~4.3]{sarkar2021brownian}, where it was shown that away from zero, inhomogeneous Brownian LPP is absolutely continuous with respect to Brownian motion on compacts. A quantitative Brownian regularity of the KPZ fixed point would thus require, as a first step, a strong control on the Radon-Nikodym derivative of the inhomogeneous BLPP with respect to Brownian motion. This is established in our companion paper \cite[Theorem~7.1]{tassopoulos2025inhomogeneousbrownian} (stated here as Theorem~\ref{thm: main companion}), where it is shown that the Radon-Nikodym derivative of the law of the spatial increments (with endpoints away from zero) of the inhomogeneous BLPP  against the Wiener measure $\mu$ on compacts is in $L^{\infty-}(\mu)$, and in particular, that for any fixed $p>1$, one has that the $L^p$ norm is at most of the order $O_p(\mathrm{e}^{d_pm^2\log m})$ for some $p$-dependent constant $d_p>0$.

Next, the coupling of the Airy sheet with the Airy line ensemble, see Definition \ref{def: Airy sheet}, allows us to express the KPZ fixed point (at unit time) on a compact interval $[1,y_0]$, for $y_0>1$ as 
\begin{equation}\label{eq: KPZ fixed point}
\begin{array}{ll}
    \mathfrak{h}(y) &= \displaystyle\max_{x\in \R}(h_0(x)+\mathcal{S}(x,y))\,,  \quad \text{ for } y\in [1,y_0] \,.
\end{array}
\end{equation}
The right hand side of the equation (\ref{eq: KPZ fixed point}) can be expressed as a maximisation problem of a random depth of last passage percolation values over the Airy line ensemble with random boundary data. 

Now, what remains is to obtain quantitative control of the random depth and the boundary data. Both require a refinement of the picture of the geodesic geometry in the Airy line ensemble. The former will amount to obtaining concentration bounds for the transversal fluctuations of semi-infinite geodesics, which is Theorem~\ref{thm: intercept tail bound}. 

\subsection{Organisation of the paper}
First, in Section \ref{sec: notation} we establish notation that will be used throughout. In Section \ref{sec: preliminaries} we provide necessary background material including estimates of Radon-Nikodym derivatives of the laws of Brownian bridges against that of Brownian motion and some path-wise properties of the Airy line ensemble and its last passage values. Section \ref{sec: geod geom} is devoted to studying geodesic geometry in the Airy line ensemble. In particular, we obtain \emph{exponentially stretched} tail bounds on intercepts of semi-infinite geodesics, Theorem~\ref{thm: intercept tail bound}. Then, in Section \ref{sec: finite depth truncation}, we use the variational formula for the KPZ fixed point and the coupling in Definition \ref{def: Airy sheet} and rely on a series of favourable events and technical inputs from \cite{wu2025applicationsoptimaltransportdyson}, thereby reducing the problem to estimating the Radon-Nikodym derivatives of inhomogeneous Brownian LPP with non-decreasing initial data. For this we use \cite[Theorem~7.1]{tassopoulos2025inhomogeneousbrownian} as a crucial input to obtain an analytically tractable quantitative comparison of finite depth truncations of the KPZ fixed point against the rate two Wiener measure in Theorem~\ref{thm: finite depth KPZ estimates a priori}. In Section \ref{sec: brownian regularity combined} we combine the above estimates to obtain the quantitative Brownian regularity of the KPZ fixed point started from compactly supported upper semi-continuous initial data, namely Theorem~\ref{thm: KPZ law local uniform comparison}. It is here where the tail bounds on the transversal fluctuations play a critical role. Then, by a localisation argument, we extend this quantitative comparison to all finitary initial data to get the main result of this paper, Theorem~\ref{thm: KPZ reg finitary}. In Section \ref{sec: applications}, we give some applications of the main result Theorem \ref{thm: KPZ reg finitary}. In particular,  we provide a one sided large deviation inequality for the KPZ fixed point, Corollary \ref{cor: ldp one sided}, and deduce the finiteness of the relative entropy of the law of the increments of the KPZ fixed point against the Wiener measure on compacts. Finally, in Section \ref{sec: outlook} we briefly outline possible avenues of strengthening the comparison of the KPZ fixed point with respect to Brownian motion. Below is a flowchart depicting the main ingredients in the proof of Theorem~\ref{thm: KPZ reg finitary}.

\begin{figure}[ht]
 \begin{center}
 \begin{NoHyper}
\resizebox{\textwidth}{!}{\begin{tikzpicture}[
    node distance=1.5cm and 1.5cm, 
    startstop/.style={rectangle, minimum width=3cm, minimum height=1cm, text centered, draw=black},
    process/.style={rectangle, minimum width=3cm, minimum height=1cm, text centered, draw=black, align=center},
    arrow/.style={thick,->,>=stealth,shorten >=2pt,shorten <=2pt} 
  ]

  \node (start) [startstop, align=center] 
    {Brownian regularity of the KPZ fixed point\\ started from finitary initial data};

    \node (finitary reg) [process, below=1.6cm of start, align=center] 
    {Brownian regularity of the KPZ fixed point\\ started from compactly-supported initial data};

  \node (above) [process, below=1.6cm of finitary reg] 
    {Brownian regularity of finite depth truncations of the KPZ fixed point };

  \node (coupling) [process, below=1.6cm of above] 
    {Brownian regularity of Airy line ensemble last passage values};

  \node (extra) [process, below left= 0.8cm and -15cm of coupling] 
    {Geodesic geometry\\
     of the parabolic Airy line ensemble:\\
     transversal fluctuation};

  \node (gibbs) [process, below left= 0.8cm and -5cm of coupling] 
    {Brownian Gibbs resampling\\
     and inhomogeneous\\
     Brownian LPP $L^p$ norm estimates};

   \draw [arrow] (gibbs) -- node[midway, left]
    {\small\cite[Theorem~7.1]{tassopoulos2025inhomogeneousbrownian}} (coupling);

  \draw [arrow] (coupling) -- node[midway, right]
    {Theorem~\ref{thm: finite depth KPZ bounds}} (above);
  \draw [arrow] (extra) to[out=85, in=0, looseness=1.0] 
    node[pos=0.5, sloped, transform shape, anchor=south, font=\scriptsize, yshift=2pt, align=center]{Theorem~\ref{thm: intercept tail bound}} (finitary reg); 
  \draw [arrow] (above) -- node[midway, right]{Theorem~\ref{thm: KPZ law local uniform comparison}} (finitary reg); 
  \draw [arrow] (finitary reg) -- node[midway, right]{Theorem~\ref{thm: KPZ reg finitary}} (start); 
\end{tikzpicture}}
\end{NoHyper}
\end{center}
   \caption{Flowchart of main steps in the proof of Theorem~\ref{thm: KPZ reg finitary}.}
  \label{fig:flowchart}
\end{figure}

\subsection{Related works}

The Brownian nature of models in the KPZ universality class, including the Airy line ensemble and the KPZ fixed point, has been a subject of intense research in recent times. Aside from integrable inputs, see for instance \cite{baik1999distribution, matetski2016kpz, liu2019multi} and \cite{johansson2019multi, johansson2017two, johansson2018two}, probabilistic and geometric methods have featured prominently ever since Corwin and Hammond proved in \cite{corwin2014brownian} that the parabolic Airy line ensemble admits a Brownian Gibbs resampling property. For a more detailed account of recent developments, one can consult the work of Calvert, Hammond and Hegde \cite{hammond2019patchwork} and the references therein.

One version of local Brownianness is to show that the local limits of the
$\text{Airy}_2$ process (the narrow wedge solution to the KPZ fixed point at unit time) converge in law to a Brownian motion, \cite{hagg2008local}, \cite{cator2015local}, \cite{quastel2013local}. In fact, \cite{quastel2013local} also establishes H\"{o}lder $1/2$-
continuity of the $\text{Airy}_2$ and $\text{Airy}_1$ processes (solution to KPZ fixed point at unit time started from flat, i.e. $h_0\equiv 0$ initial data). The  H\"{o}lder $1/2$- continuity and the locally Brownian nature (in
terms of convergence of the finite dimensional distributions) were established in \cite{matetski2016kpz}. Such H\"{o}lder continuity
results and local limits for certain initial conditions have also been established in \cite{pimentel2018local} and \cite{pimentel2020brownian} (see also \cite{johansson2017two},
\cite{johansson2018two}). A stronger notion of the locally Brownian nature is absolute continuity with respect to Brownian motion
on compact intervals, which we call Brownian on compacts. That the $\text{Airy}_2$ process is Brownian on compacts
was first proved in \cite{corwin2014brownian} using the Brownian Gibbs property.

Very recently, the locally Brownian nature of the Airy line ensemble (and so for $\text{Airy}_2$ process) has been considerably strengthened in \cite{dauvergne2024wienerdensitiesairyline}, where Dauvergne gave an explicit form for the density of the finite depth truncations of increments of the Airy line ensemble against Brownian motion on compacts and established ways of estimating inverse acceptance probabilities following ideas from the `tangent method'. This geometric approach, first introduced in \cite{aggarwal2022arcticboundariesicemodel}  was also used in \cite{ganguly2024sharpuppertailbehavior} to provide sharp estimates for the one-point density of the KPZ fixed point started from fairly general initial data at the origin. Also, in \cite{wu2025applicationsoptimaltransportdyson}, Wu introduced ideas from the theory of optimal transportation to the study of spatial regularity of the Airy line ensemble and has provided sub-Gaussian tail bounds (with universal coefficients) on the modulus of continuity of any given level of the Airy line ensemble on a compact interval.

However, for general initial conditions, the picture is less clear with more questions open. A result providing a more quantitative notion of Brownian regularity, called \emph{patchwork quilt of Brownian fabrics}, was established in Hammond \cite{hammond2019patchwork} and \cite{hammond2019patchwork}. Roughly the result states that the KPZ fixed point $\mathfrak{h}(\cdot)\equiv \mathfrak{h}_1(\cdot)$ on a unit interval is the result of `stitching' a random number of profiles, where each profile is absolutely continuous with respect to a Brownian motion with Radon-Nikodym derivative in $L^p$ for all $p<3$. The author conjectured (Conjectured $1.3$ in \cite{hammond2019patchwork}) that one can dispense with these random patches and establish $L^p$ estimates for all $p>1$ for the Radon-Nikodym derivative, a problem which remains open. A first step in this direction was the proof of absolute continuity on a single non-random patch for general initial conditions, which has been established in \cite[Theorem~1.2]{sarkar2021brownian}, using methods different from those in \cite{hammond2019patchwork}.

Our main result in Theorem~\ref{thm: KPZ reg finitary} of this paper strengthens quantitatively the absolute continuity result of \cite{sarkar2021brownian} for finitarty initial data (see Definitions \ref{def: max support} and \ref{def: finitary}). Our proof of this result crucially depends upon refining certain aspects of the construction of the directed landscape in \cite{DOV}, the variational characterisation of the KPZ fixed point from \cite{sarkar2021brownian}, the Brownian Gibbs property of the parabolic Airy line ensemble established in \cite{corwin2014brownian}, the strong comparison against Brownian motion on compacts of inhomogeneous Brownian LPP (the Radon-Nikodym derivative of the law of the spatial increments against the Wiener measure $\mu$ on compacts being in $L^{\infty-}(\mu)$) established in \cite{tassopoulos2025inhomogeneousbrownian} (stated here as Theorem~\ref{thm: main companion}), as well as technical inputs from \cite{dauvergne2021} and \cite{wu2025applicationsoptimaltransportdyson} used in estimating Brownian inverse acceptance probabilities with random boundary points and global modulus of continuity estimates for the stationary version of the Airy line ensemble respectively. 

\section{Notation}\label{sec: notation}

We introduce some notation and conventions we will be using throughout.

When in some estimates a constant appears that will depend on some parameters $a,b,c,\cdots$, it will be denoted by $C_{a,b,c,\cdots}$, unless otherwise specified. Constants without subscripts are deemed to be universal. Additionally, for ease of notation, such constants are allowed to change from line to line. Moreover,  such constants may be dropped and instead replaced with the symbols $\lesssim_{a,b,c,\cdots}(\equiv O_{a,b,c,\cdots}(\cdot))$ and $\gtrsim_{a,b,c,\cdots}$ for some parameters $a,b,c,\cdots$ which stand for $\le C_{a,b,c,\cdots}$ and $\ge C'_{a,b,c,\cdots}$ for some positive constants $C_{a,b,c,\cdots}, C'_{a,b,c,\cdots}$ respectively.

We take the set of natural numbers $\N$ to be $\{1,2, \ldots \}$. For $k\in \N$, we use an underbar to denote a $k$-vector, that is, $\underline{x}\in \R^k$. We denote the integer interval $\{i,i+1, \ldots, j\}$ by $\llbracket i, j\rrbracket$. A $k$-vector $\underline x = (x_1, \ldots, x_k)\in \R^k$ is called a $k$-decreasing list if $x_1>x_2> \ldots >x_k$. For a set $I\subseteq \R$, let $I^k_> \subseteq I^k$ be the set of $k$-decreasing lists of elements of $I$, and $I^k_{\geq}$ be the analogous set of $k$-non-increasing lists.

The symbols $\cdot \land \cdot , \cdot \lor \cdot$ denote $\min\{\cdot, \cdot \}$ and $\max\{\cdot, \cdot \}$ respectively. For any $a
\in \R$, $a_+$ denotes $a\lor 0$ and $a_-$ denotes $-a\lor 0$.

 We define the affinely shifted bridge version, that is zero at both endpoints, of a real-valued function $f$ on an interval $[a,b]$, $f^{[a,b]}:[a,b]\to \R$ by
\begin{equation}\label{eq: affine shift}
f^{[a,b]}(x) := f(x) - \frac{x-a}{b-a}\cdot f(b) - \frac{b-x}{b-a}\cdot f(a)
\end{equation}
for all $x\in [a,b]$. 

We now turn to some notational conventions for the path spaces that will be used throughout. For general domains of paths $J$, we denote the space of continuous paths, in the usual topologies, by $\mathscr{C}_{*,*}(J, \R)$. More specifically, if the domain is an interval $[a,b]\subseteq \R$, we denote the space of continuous functions with domain $[a,b]$ which vanish at $a$ by $\mathscr{C}_{0,*}([a,b], \R)$. For random functions taking values in these spaces, we will always endow them with their respective Borel $\sigma$-algebras generated by the topologies of uniform convergence (which makes them into Polish spaces). Similarly, for $k\ge 1, a < b$, define $\mathscr{C}^k_{*,*}([a,b], \R)\equiv \prod_{i=1}^k \mathscr{C}_{*,*}([a,b], \R)$ and equip it with the product of the uniform topologies. Furthermore, for $a<b$, $k \in \N$ and $\underline{x},\underline{y}\in \R^k_{>}$, let $\mathscr{C}^k_{\underline{x}, \underline{y}}([a,b], \R)$ denote the space $\Big\{g \in \mathscr{C}^k_{*, *}([a,b], \R): \forall i\in \llbracket 1, k\rrbracket,\,  g_{i}(a) = x_i \; \text{and}\; g_{i}(b) = y_i\Big\}$. 

We say that a Brownian motion or a Brownian bridge has \emph{rate $v$} if its quadratic variation in an interval $[s,t]$ is equal to $v(t-s)$. We say that a Dyson's Brownian motion or a Brownian $k$-melon has rate $v$ if the component Brownian motions have rate $v$. From now on, all Brownian motions are rate two unless stated otherwise. 

For $0\leq a <b$, in analogy to the above, let $\mathfrak{B}^{[a,b]}_{*, *}(\cdot)$ denote the law of a rate two Brownian motion on $[0,\infty)$ starting from the origin restricted to the interval $[a,b]$ (the two star symbols indicate that the Brownian motion starts from the origin at time zero, which might be outside of the interval $[a,b]$). When $k\ge 1$ independent copies are considered, we will be using the usual product measure notation $(\mathfrak{B}^{[a,b]}_{*, *})^{\otimes k}$. Moreover, for $\underline{x}, \underline{y}\in \R^k$ let $\mathfrak{B}^{[a,b]}_{\underline{x}, \underline{y}}(\cdot)$ denote the law of $k$ independent rate two Brownian bridges on $[a,b]$ with endpoints $(a, \underline{x})$ and $(b, \underline{y})$, hence it is a measure on $\mathscr{C}^k_{\underline{x}, \underline{y}}([a,b]\,, \R)$ equipped with the usual Borel $\sigma$-algebra on the product topology of local uniform convergence. 

For $k\in \N$, $a<b$, $\underline{x},\underline{y}\in \R^k_{>}$ and  $f:[a,b] \to \R$ a measurable function such that $x_{k}>f(a)$ and $y_k>f(b)$, the \emph{non-crossing} event on any fixed union of finite sub-intervals $J \subseteq [a,b]$ is denoted by
\begin{align}\label{eq: noint}
    \mathrm{NoInt}(J\,,f) :=\Big\{&g \in \mathscr{C}^k_{*, *}(J, \R):  \textrm{ for all }   r\in J, 1\leq i<j\leq k, \nonumber \\
     &g_{i}(r) > g_{j}(r)\,, g_i(a)=x_i, g_i(b)=y_i\textrm{ and } g_k(r)>f(r) \Big\}\,.
\end{align} 
In what is to follow, the probability $ \mathfrak{B}^{[a,b]}_{\underline{x},\underline{y}}(\mathrm{NoInt}(J\,,f))$ is called an {\it acceptance probability}. Roughly speaking, it is the probability of the event that a collection of $k$ independent Brownian bridges on $[a,b]$ with endpoints $\underline{x}, \underline{y}$ do not intersect, and also stay above the `lower barrier' $f$ on $J$. We note this event has a positive probability owing to standard facts of Brownian bridges, see Section 2.2.2 in \cite{corwin2014brownian}.

\section{Preliminaries}\label{sec: preliminaries}
In this section, we will recall some basic definitions that appear in the KPZ universality class, namely, last passage percolation, the Pitman transform and melons; and collect some basic results that will be useful later;  these include some elementary estimates involving Brownian bridge, Radon-Nikodym derivatives (against Brownian motion) estimates and pathwise properties of the Airy line ensemble. We start with the central probabilistic object of study, namely random line ensembles.

\subsection{Line ensembles} The following definition makes precise the notion of a \emph{random line ensemble}, a probabilistic object of central importance in the KPZ universality class. It is a random variable taking values in an indexed (at most countably infinite) family of continuous paths defined on a common subset of $\R$.

\begin{definition}[Random ensemble]\label{def: random ensemble}
    Let $\Sigma$ be a (possibly infinite) interval of $\Z$, and let $\Lambda$ be an interval of $\R$. Consider the set \(X\equiv \mathscr{C}^\Sigma\) of continuous functions $f: \Lambda \times\Sigma\rightarrow \R$. We endow it with the topology of uniform convergence on compact subsets of $\Lambda \times\Sigma$. Let $C$ denote the sigma-field  generated by Borel sets in $X$.

    A {\it $\Sigma$-indexed line ensemble} $L$ is a random variable defined on a probability space $(\Omega,\mathcal{B},\PP)$, taking values in $X$ such that $L$ is a $(\mathcal{B},C)$-measurable function. Furthermore, we write $L_i\equiv(L(\omega))(i,\cdot)$ for the line indexed by $i\in\Sigma$. 
\end{definition}

\subsection{Last passage percolation} We begin with the collection of some preliminary facts regarding last passage percolation (sometimes abbreviated as LPP in the paper) over ensembles of functions following \cite{DOV}.

Formally, let \(I\subset \mathbb{Z}\) be a possibly finite index set and define the space \(\mathscr{C}^{I}\) of sequences of continuous functions with real domains, that is, the space
\begin{equation*}f: \mathbb{R}\times I\to \mathbb{R}\quad (x,i)\mapsto f_i(x)\,.\end{equation*}

\begin{definition}[Path] Let \(x\leq y \in \mathbb{R}\), and \(m\leq \ell \in \mathbb{Z}\) respectively. A \textbf{path}, from \((x,\ell)\) to \((y,m)\) is a non-increasing  function \(\pi: [x,y] \to \mathbb{N}\) which is cadlag on \((x,y)\) and takes the values \(\pi(x)= \ell\) and \(\pi(y)= m\).
\label{def: path}
\end{definition}

\begin{remark}
    The convention that the paths be non-increasing is so that they match the natural indexing of the Airy line ensemble, see Section \ref{subsec: Airy line ensemble}. 
\end{remark}

In what is to follow, since we will primarily be considering the Airy line ensemble (see Section \ref{subsec: Airy line ensemble} for a definition), we will take the indexing set to be \(I = \mathbb{N}\). We now define an important quantity associated to each such path, namely, its \emph{length} as the sum of increments of \(f\) along \(\pi\). This also leads one to naturally define a derived quantity, namely the \emph{last passage value}.

\begin{definition}[Length]\label{def: length} Let \(x\leq y \in \mathbb{R}\) and \(m < k\in\mathbb{Z}\). For each \(m\leq i <k\), let \(t_{k-i}\) denote the jump of the path \(\pi\), on an ensemble \((f_i)_{i\in I}\), from \(f_{i+1}\) to \(f_{i}\). Then the length of \(\pi\) is defined as
\begin{equation*}
\ell(\pi) = f_m(y)-f_m(t_{k-m}) + \displaystyle\sum_{i = 1}^{\ell-m-1}(f_{k-i}(t_{i+1})-f_{k-i}(t_{i}))+f_{k}(t_{1})-f_{k}(x)\,.
\end{equation*}
\end{definition}

\begin{figure}[ht]
  \centering
    \begin{tikzpicture}[scale=1.0]

    \def\startx{1}
    \def\endx{9}
    \def\startg{2.5}
    \def\endg{7.5}
    \def\spacing{2.0}
    \def\amp{0.15}

    \pgfmathsetseed{2025}

    \pgfmathsetmacro{\tOne}{\startg+1*(\endg-\startg)/4}
    \pgfmathsetmacro{\tTwo}{\startg+2*(\endg-\startg)/4}
    \pgfmathsetmacro{\tThree}{\startg+3*(\endg-\startg)/4}

    \foreach \i in {1,...,4} {
      \draw[thick, blue!60]
        plot[domain=\startx:\endx, samples=100, smooth]
          (\x, { \i*\spacing - 2.5 - 0.05*(\i)*(\x-5)^2 + (2*rnd-1)*\amp });

      \pgfmathsetmacro{\segA}{\startg+(\i-1)*(\endg-\startg)/4}
      \pgfmathsetmacro{\segB}{\startg+\i*(\endg-\startg)/4}
      \draw[thick, Green]
        plot[domain=\segA:\segB, samples=100, smooth]
          (\x, { \i*\spacing - 2.5 - 0.05*(\i)*(\x-5)^2 });

      \pgfmathsetmacro{\xmid}{0.5*(\segA+\segB)}
      \pgfmathsetmacro{\ytoplabel}{\i*\spacing - 2.5 - 0.05*(\i)*(\xmid-5)^2 + 0.3}
      \node[Green, above] at (\xmid, \ytoplabel) {$\Delta_{\i}$};

      \ifnum\i>1
          \pgfmathsetmacro{\jumpx}{\startg+(\i-1)*(\endg-\startg)/4}
          \pgfmathsetmacro{\ylow}{(\i-1)*\spacing - 2.5 - 0.05*(\i-1)*(\jumpx-5)^2}
          \pgfmathsetmacro{\yhigh}{\i*\spacing - 2.5 - 0.05*(\i)*(\jumpx-5)^2}

          \draw[Green, dashed] (\jumpx,\ylow) -- (\jumpx,\yhigh);
          \draw[Green, thick, fill=white] (\jumpx,\ylow) circle (2pt);
          \fill[Green] (\jumpx,\yhigh) circle (2pt);
      \fi
    }

    \pgfmathsetmacro{\ystart}{1*\spacing - 2.5 - 0.05*(1)*(\startg-5)^2}
    \pgfmathsetmacro{\yend}{4*\spacing - 2.5 - 0.05*(4)*(\endg-5)^2}
    \fill[Green] (\startg,\ystart) circle (2pt);
    \fill[Green] (\endg,\yend) circle (2pt);

    \node[left] at (\startg, \ystart+0.2) {$(x,4)$};
    \node[right] at (\endg, \yend) {$(y,1)$};

    \draw[->] (\startg-0.5,-1.5) -- (\endg+0.5,-1.5);

    \pgfmathsetmacro{\ytOne}{1*\spacing - 2.5 - 0.05*(1)*(\tOne-5)^2}
    \pgfmathsetmacro{\ytTwo}{2*\spacing - 2.5 - 0.05*(2)*(\tTwo-5)^2}
    \pgfmathsetmacro{\ytThree}{3*\spacing - 2.5 - 0.05*(3)*(\tThree-5)^2}

    \foreach \px/\py/\lab in {
        \startg/\ystart/$x$,
        \tOne/\ytOne/$t_1$,
        \tTwo/\ytTwo/$t_2$,
        \tThree/\ytThree/$t_3$,
        \endg/\yend/$y$
    } {
      \draw[dotted, shorten >=2pt] (\px,-1.5) -- (\px,\py);
      \filldraw[black] (\px,-1.5) circle (2pt);
      \node[below] at (\px,-1.5) {\lab};
    }

\end{tikzpicture}
   \caption{Visualisation of a possible path (\color{Green}green\color{black}) “embedded” in the Airy line ensemble (\color{blue}blue\color{black}), here \((\mathcal{A}_1, \mathcal{A}_2, \mathcal{A}_3, \mathcal{A}_4)\) from top to bottom, and \(m = 1, k = 4\) (see Section \ref{subsec: Airy line ensemble}). Here $\Delta_1 = \mathcal{A}_4(t_1)-\mathcal{A}_4(x)$, $\Delta_2 = \mathcal{A}_3(t_2)-\mathcal{A}_3(t_1)$, $\Delta_3 = \mathcal{A}_2(t_3)-\mathcal{A}_2(t_2)$, $\Delta_4 = \mathcal{A}_1(y)-\mathcal{A}_1(t_3)$ and $\ell = \sum_{i=1}^4 \Delta_i.$}
\end{figure}

\begin{definition}[Last passage value]\label{Definition: last passage}
    With \(x\leq y, m<k\) as before and \(f\in \mathscr{C}^{I}\), define the \textbf{last passage value} of \(f\) from \((x,k)\) to \((y,m)\) as
    \begin{equation*}
    f[(x,k)\to(y,m)] \stackrel{\mathrm{def}}{=}\displaystyle \sup_{\pi}\ell(\pi)\,,
    \end{equation*}
where the supremum is over precisely the paths \(\pi\) from \((x,k)\) to \((y,m)\).
\end{definition}
\begin{remark}
    Any path \(\pi\) from \((x,k)\) to \((y,m)\) such that its length is equal to its last passage value is called a \textbf{geodesic}. To establish the existence of geodesics one can proceed by first noticing that the length of a path \(\ell(\pi)\), can be viewed as a function on the subset \(\mathcal{Z}\) of non-increasing cadlag functions with fixed endpoints in \(\mathbf{D}\), the space of cadlag functions \(\mathbf{D}\stackrel{\mathrm{def}}{=}\mathbf{D}([x,y], \N)\). When endowed with respect to the  Skorokhod topology, which is metrisable, the above function is continuous. Since \(\mathcal Z\) is closed with respect to the above topology of \textquotedblleft jump times\textquotedblright, a compactness argument using Arzela-Ascoli, see \cite[ch. 3]{billingsley2013convergence}, implies that the supremum over admissible paths is indeed attained.
\end{remark}

Last passage percolation enjoys the following \textbf{metric composition law}, Lemma 3.2 in DOV \cite{DOV}.

\begin{lemma}[Metric composition law]\label{Lemma: Metric Composition}
    Let \(x\leq y \in \mathbb{R}\), \(m < \ell\in\mathbb{Z}\) and \(f\in \mathscr{C}^I\). If \(k\in \{m, \dots, \ell\}\), then we have
    \begin{equation*}
    f[(x,\ell)\to(y,m)] = \displaystyle \sup_{z\in[x,y]}(f[(x,\ell)\to(z,k)]+f[(z,k)\to(y,m)])\,,
    \end{equation*}
    and if \(k\in \{m+1, \dots, \ell\}\), then 
    \begin{equation*}
    f[(x,\ell)\to(y,m)] = \displaystyle \sup_{z\in[x,y]}(f[(x,\ell)\to(z,k)]+f[(z,k-1)\to(y,m)])\,.
    \end{equation*}
    Furthermore for any \(z\in [x,y]\), 
    \begin{equation}\label{eq: composition}
    f[(x,\ell)\to(y,m)] = \displaystyle \sup_{k\in \{m, \dots, \ell\}}(f[(x,\ell)\to(z,k)]+f[(z,k)\to(y,m)])
    \end{equation}
\end{lemma}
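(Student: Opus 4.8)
The plan is to deduce all three displayed identities from a single \emph{additivity} property of the length functional under splitting a path at a point, together with the existence of geodesics recorded in the remark after Definition~\ref{Definition: last passage}. Throughout I will use the convention, implicit in Definition~\ref{def: length}, that paths decrease by exactly one at each of their jumps and hence visit every intermediate level; restricting to such paths does not change the last passage value, since a jump of size greater than one can be replaced by a cascade of unit jumps over an arbitrarily short interval, the extra increments tending to $0$ by continuity of the $f_i$.

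The additivity statement I will establish is: for any level $k\in\{m,\dots,\ell\}$ and point $z\in[x,y]$, concatenating a path $\pi_1$ from $(x,\ell)$ to $(z,k)$ with a path $\pi_2$ from $(z,k)$ to $(y,m)$ (set $\pi=\pi_1$ on $[x,z)$ and $\pi=\pi_2$ on $[z,y]$, which is non-increasing and cadlag since both pieces take the value $k$ at $z$) produces a path from $(x,\ell)$ to $(y,m)$ with $\ell(\pi)=\ell(\pi_1)+\ell(\pi_2)$; and conversely every path $\pi$ from $(x,\ell)$ to $(y,m)$ that is at level $k$ at the point $z$ decomposes this way, with $\pi_1=\pi|_{[x,z]}$ and $\pi_2=\pi|_{[z,y]}$. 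The length identity is read off directly from Definition~\ref{def: length}: the jump times of $\pi$ above level $k$ lie in $[x,z]$ and those below in $[z,y]$, the level-$k$ run of $\pi$ is cut at $z$, and the shared value $f_k(z)$ enters $\ell(\pi_1)$ as the right end of its level-$k$ summand and $\ell(\pi_2)$ as the left end of its level-$k$ summand, so it cancels in the sum. Given this, the first identity follows by a two-sided comparison: for $\le$, take a geodesic $\pi$ from $(x,\ell)$ to $(y,m)$, let $z$ be the time it reaches level $k$ (with $z=x$ when $k=\ell$ and $z=y$ when $k=m$), and use $f[(x,\ell)\to(y,m)]=\ell(\pi)=\ell(\pi|_{[x,z]})+\ell(\pi|_{[z,y]})\le f[(x,\ell)\to(z,k)]+f[(z,k)\to(y,m)]\le\sup_{z\in[x,y]}(\cdot)$; for $\ge$, fix $z$, concatenate geodesics realising the two half last-passage values, apply additivity, and take $\sup$ over $z$. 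Equation~\eqref{eq: composition} is the same argument with $z$ held fixed and the level $k=\pi(z)$ now the quantity being optimised.

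For the second identity the split is taken at a jump rather than on a plateau: given $k\in\{m+1,\dots,\ell\}$ and a geodesic $\pi$, let $z$ be the time of its jump from level $k$ to level $k-1$; then $\pi|_{[x,z)}$ completed by the value $k$ at $z$ is a path from $(x,\ell)$ to $(z,k)$, while $\pi|_{[z,y]}$ is a path from $(z,k-1)$ to $(y,m)$, the level-$k$ run of $\pi$ lies entirely in the first piece and the level-$(k-1)$ run entirely in the second, so the summands of $\ell(\pi)$ partition without remainder; the $\le$ inequality follows as before and $\ge$ by the matching concatenation (a geodesic to $(z,k)$ followed by a geodesic from $(z,k-1)$ meet at $z$ with left value $k$ and right value $k-1$, again yielding an admissible non-increasing cadlag path), followed by a $\sup$ over $z$. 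The one point requiring care ---the expected main obstacle--- is the path bookkeeping in the additivity step: verifying that restrictions and concatenations of paths are again admissible (non-increasing, cadlag, with the prescribed endpoints) under the cadlag convention, handling the boundary levels $k\in\{m,\ell\}$ and the case in which several jumps of the geodesic coincide at the chosen split point $z$, and matching the terms of Definition~\ref{def: length} across the split so that the value $f_k(z)$ (for the first and third identities), respectively the level-$k$ and level-$(k-1)$ runs (for the second), is accounted for exactly once. Everything past that is a routine comparison of suprema.
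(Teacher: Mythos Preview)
Your argument is correct and is the standard direct proof of the metric composition law via additivity of the length functional under path splitting and concatenation, together with the existence of geodesics. The paper itself does not prove this lemma but cites it as Lemma~3.2 of \cite{DOV}; your proof is essentially the one given there, so there is nothing further to compare.
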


We are now in a position to state the main result of \cite{tassopoulos2025inhomogeneousbrownian} which gives pathwise estimates for the Radon-Nikodym derivatives of Brownian LPP started from inhomogeneous `initial data', that will be crucial in obtaining quantitative Brownian regularity of the KPZ fixed point.

First we define inhomogeneous Brownian LPP started from non-increasing initial data.

\begin{definition}[Inhomogeneous Brownian LPP]\label{def: inhom brownian lpp}
    Fix $m\ge 1$, $B_1, \cdots, B_m$ be independent Brownian motions starting from the origin, $\underline{g} = (g_\ell)_{\ell =1}^m \in\R^m_{\ge}$ and $B=(B_1,
    \ldots,B_m)$. Then, the process 
     \begin{equation*}
     \displaystyle\max_{1\le \ell \le m}(g_{\ell} + B[(0,\ell)\to (y, 1)])\,, \qquad  y\in [0,\infty)
     \end{equation*}
     is called the \textbf{inhomogeneous Brownian LPP} started from the initial data $\underline{g}$.
\end{definition}

Now we can proceed to the statement of the main result of \cite{tassopoulos2025inhomogeneousbrownian}.

\begin{theorem}(\cite[Theorem~7.1]{tassopoulos2025inhomogeneousbrownian})\label{thm: main companion}
     Fix $m\ge 1$, $(g_\ell)_{\ell =1}^m \in\R^m_{\ge}$ and let $\mathfrak{h}(\cdot)\equiv \mathfrak{h}_1(\cdot)$ be the inhomogeneous Brownian LPP started from the initial data $\underline{g}$. Then, for all $0<\ell<r<\infty$, we have that the Radon-Nikodym derivative of the law of $\mathfrak{h}(\cdot)\equiv \mathfrak{h}_1(\cdot)$ against a rate two Brownian motion starting from the origin $\mu$ on $[\ell,r]$ is in $L^{\infty-}(\mu\vert_{[\ell, r]})$. In particular, with $\xi_{\ell, r, m, \underline{b}}$ denoting the law of $H$ as defined above on $[\ell, r]$
     \begin{equation*}
         \norm{\frac{\diff \xi_{\ell, r, m, \underline{b}}}{\diff\mu\vert_{[\ell, r]}}}_{L^p(\mu\vert_{[\ell, r]})} =  O_p(\mathrm{e}^{d_pm^2\log m}),\quad \mathrm{for all}\quad p>1.
     \end{equation*}
     for some universal in $m\in \N$ (though possibly $p$-dependent) constant $d_p>0$ for all $p>1$.

     In particular, we obtain the estimates
     \begin{equation*}
     \begin{array}{cc}
         \norm{\frac{\diff {\xi}_{\ell, r, m, \underline{b}}}{\diff\mu}}_{L^p(\mu)} &= \displaystyle\prod^m_{i=1}\exp\big(-(b_i-b_m)^2/(4\ell)\big)\cdot \left(\frac{(b_1-b_m)}{2\ell}\lor 1\right)^{m^2}\\
         & \quad \cdot O_{p,\ell, r}\Bigg(\mathrm{e}^{d m^2\log m + c_{\ell} \big(\sum_{i=1}^m (b_i-b_m)\big)^2}\Bigg),
    \end{array}
     \end{equation*}
     for some constants $c_{\ell,r},d>0$ independent of $m\in \N$ and all $p>1$.
\end{theorem}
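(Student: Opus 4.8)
The plan is to realise $H$ as the top curve of a finite line ensemble with the Brownian Gibbs property, read off the Radon--Nikodym derivative of its recentred law against Brownian motion as an averaged inverse acceptance probability, and then estimate the relevant moments. (Here ``recentred'' means the increment process $H(\cdot)-H(\ell)$: this is the object the estimate really concerns, which is why the bound is translation invariant in $\underline b$.)

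\textbf{Step 1 (Pitman/melon representation).} Iterating the metric composition law (Lemma \ref{Lemma: Metric Composition}) --- equivalently, applying the Pitman transform to $B_1,\dots,B_m$ --- one writes $H$ as the top curve $\mathcal{H}_1$ of a finite line ensemble $\mathcal{H}=(\mathcal{H}_1,\dots,\mathcal{H}_m)$ of non-crossing continuous curves on $[0,\infty)$, a deterministic functional of $(B_1,\dots,B_m)$ and $\underline g$, which enjoys the Brownian Gibbs property of \cite{corwin2014brownian}; the initial data $\underline g$ enters only through the entrance law of the ensemble at time $0$. In particular, conditionally on the endpoint vectors $\underline x:=\mathcal{H}(\ell)$, $\underline y:=\mathcal{H}(r)$, on $\mathcal{H}$ restricted to $[0,\ell]\cup[r,\infty)$, and on $\mathcal{H}_2,\dots,\mathcal{H}_m$ restricted to $[\ell,r]$, the curve $H|_{[\ell,r]}=\mathcal{H}_1|_{[\ell,r]}$ is a rate-two Brownian bridge from $x_1$ to $y_1$ conditioned to stay above $\mathcal{H}_2$. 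Recentring $H$ at $\ell$ removes an overall additive shift of $\underline g$, so the comparison below sees $\underline g$ only through the gaps $b_i-b_m$.

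\textbf{Step 2 (the density).} With $\mu$ the rate-two Brownian motion restricted to $[\ell,r]$, disintegrate both $\xi:=\xi_{\ell,r,m,\underline b}$ and $\mu$ over the pair of endpoint values $(\omega(\ell),\omega(r))$: the conditional law of $\mu$ given its endpoints is a Brownian bridge, while the conditional law of $\xi$ given them is a mixture, over the remaining melon data, of Brownian bridges with those endpoints conditioned to stay above $\mathcal{H}_2$. Since a Brownian bridge conditioned above a fixed barrier has Radon--Nikodym derivative $\mathbf{1}\{\text{stays above}\}/(\text{acceptance probability})$ against the unconditioned bridge, with the acceptance probability as in \eqref{eq: noint}, this gives
\[
\frac{\diff\xi}{\diff\mu}(\omega)=W\big(\omega(\ell),\omega(r)\big)\cdot
\mathbb{E}\!\left[\frac{\mathbf{1}\{\omega>\mathcal{H}_2\text{ on }[\ell,r]\}}
{\mathfrak{B}^{[\ell,r]}_{\omega(\ell),\omega(r)}\big(\mathrm{NoInt}([\ell,r],\mathcal{H}_2)\big)}\ \big|\ \mathcal{H}_1(\ell)=\omega(\ell),\ \mathcal{H}_1(r)=\omega(r)\right],
\]
where $W$ is the ratio of the two-time marginal density of $(\mathcal{H}_1(\ell),\mathcal{H}_1(r))$ to the Gaussian density of $(\mu(\ell),\mu(r))$, and one passes between Brownian bridges and Brownian motion using the elementary Radon--Nikodym estimates of Section \ref{sec: preliminaries}. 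Computing $W$ from the Karlin--McGregor formula for the melon exhibits it as a Gaussian weight times a (suitably normalised) Vandermonde factor $\prod_{i<j}(x_i-x_j)\prod_{i<j}(y_i-y_j)$, the remaining endpoints $x_i,y_i$ ($i\ge 2$) being integrated out inside the conditional expectation. The prefactors in the statement are then transparent: $\prod_i\exp(-(b_i-b_m)^2/(4\ell))$ is the leading Gaussian weight of the recentred endpoints, $\big((b_1-b_m)/(2\ell)\lor 1\big)^{m^2}$ is the crude power-$m^2$ bound on the normalised Vandermonde in terms of the endpoint spread, and the residual $\mathrm{e}^{d m^2\log m}$ absorbs the melon normalising constants via Stirling.

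\textbf{Step 3 ($L^p$ bound, and the main obstacle).} Raising the identity to the $p$-th power, moving the power inside the average by Jensen at the cost of a constant, and applying H\"older to decouple $W$ from the inverse acceptance probability, the $L^p(\mu)$ norm is controlled by (i) a $p$-th moment of $W$ under $\mu$ --- a Gaussian computation producing the $\exp\!\big(c_\ell(\sum_i(b_i-b_m))^2\big)$ factor --- and (ii) a negative moment $\mathbb{E}\big[\mathfrak{B}^{[\ell,r]}_{x_1,y_1}(\mathrm{NoInt}([\ell,r],\mathcal{H}_2))^{-q}\big]$ with random boundary points $x_1,y_1$ and random lower barrier $\mathcal{H}_2$. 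For (ii) one invokes the inverse-acceptance-probability estimates of Dauvergne \cite{dauvergne2024wienerdensitiesairyline} (and the tangent-method bounds there and in \cite{dauvergne2021}), together with Gaussian tail bounds on the gaps $x_1-\mathcal{H}_2(\ell)$, $y_1-\mathcal{H}_2(r)$ and the sub-Gaussian modulus-of-continuity estimates of Wu \cite{wu2025applicationsoptimaltransportdyson} for $\mathcal{H}_2$; since $\ell>0$ the interval $[\ell,r]$ has positive length, the typical gaps are of order $\sqrt{\ell}$, and the acceptance probability is bounded below with overwhelming probability, so all negative moments are finite with growth no worse than $\mathrm{e}^{O(m^2\log m)}$. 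Assembling (i) and (ii) gives the explicit bound, hence membership of the density in $L^{\infty-}(\mu)$, with the $p$-dependence tracked through the H\"older exponent into the constant $d_p$. The main obstacle is precisely (ii): the acceptance probability collapses when $x_1,y_1$ lie close to $\mathcal{H}_2$ or to each other relative to the interval length, and controlling the smallness of this event simultaneously for all $m$ with only $\mathrm{e}^{O(m^2\log m)}$ loss is what forces the fine estimates of \cite{dauvergne2024wienerdensitiesairyline} and their careful combination with Gaussian concentration for the melon boundary values and uniform modulus-of-continuity control on $\mathcal{H}_2$; a secondary point --- that paths $\omega$ with atypically large supremum norm do not spoil the bound --- is handled by the Gaussian tails of $\mu$ and the decay of $W$.
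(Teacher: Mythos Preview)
This theorem is not proved in the present paper: it is quoted verbatim as \cite[Theorem 7.1]{tassopoulos2025inhomogeneousbrownian} and used throughout as a black-box input (see the flowchart in Figure~\ref{fig:flowchart} and the discussion preceding Theorem~\ref{thm: finite depth KPZ estimates a priori}). There is therefore no proof here to compare your proposal against.

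For what it is worth, the paper does leave a hint about the companion paper's route: in the proof of Theorem~\ref{thm: finite depth KPZ estimates a priori} it is noted that $Y^{m,G}$ ``can be expressed as the top line of a sequence of upwardly reflected Brownian motions'' via \cite[Theorem 4.3]{sarkar2021brownian}, and it is to that representation that Theorem~\ref{thm: main companion} is applied. This suggests the companion argument proceeds through the iterated Skorokhod/Pitman reflection picture rather than through a single Gibbs resampling on $[\ell,r]$ as you outline. Your sketch is in the right spirit --- Gibbs-type conditioning plus inverse acceptance probability control --- but a few steps would need real work: the claim that the melon built from $(B_1,\dots,B_m)$ with \emph{general} entrance data $\underline g$ at time $0$ enjoys the Brownian Gibbs property on $[\ell,r]$ is not standard (the usual statement is for $\underline g=0$, i.e.\ Dyson Brownian motion); the disintegration in Step~2 mixes conditioning on $\mathcal H_1$'s endpoints with conditioning on all of $\mathcal H_2,\dots,\mathcal H_m$ on $[\ell,r]$, and one has to be careful that the resulting $W$ and the conditional expectation are well-defined and separable as you write them; and the inverse-acceptance estimates of \cite{dauvergne2024wienerdensitiesairyline} are developed for the Airy line ensemble with its specific curvature and gap structure, so porting them to a finite melon with arbitrary $\underline g$ while retaining the $\mathrm{e}^{O(m^2\log m)}$ growth is presumably where most of the actual work lies.
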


\subsection{Pitman transform}

Recall that with $f=(f_1,f_2)$ where $f_i:[0,\infty)\mapsto \R$ for $i=1,2$, for $f\in \mathscr{C}^2_{*,*}([0,\infty))$, we define $\mathrm \mathrm{W}f=(\mathrm{W} f_1,\mathrm{W} f_2)\in \mathscr{C}^2_{*,*}([0,\infty))$, the \textbf{Pitman transform} of $f$ as follows. For $x<y\in [0,\infty)$, define the maximal gap size
\begin{equation*}G(f_1,f_2)(x,y)\equiv\max\left(\max_{s\in [x,y]}\big(f_2(s)-f_1(s)\big)\,,\,0\right)\,.\end{equation*}
Then define for all $t\in [0,\infty)$
\begin{equation}\label{eq: pitmantrans}
\mathrm{W} f_1(t)=f_1(t)+G(f_1,f_2)(0,t)\,, \quad \mbox{ and }\quad \mathrm{W} f_2(t)=f_2(t)-G(f_1,f_2)(0,t)\,.
\end{equation}

One can express the top line of the Pitman transform in terms of last passage values. 

\begin{lemma}\label{lemma: Pitman melon}
    Let \(f\in \mathscr{C}^2_{*,*}([0,\infty))\) and let \(Wf = (Wf_1, Wf_2)\) be as above. Then for all \(t\in[0,\infty)\),
    \begin{equation*}
    Wf_1(t) = \displaystyle \max_{i=1,2}\{f_i(0) + f [(0, i) \to (t, 1)]\} \,.
    \end{equation*}
\end{lemma}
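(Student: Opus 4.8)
The proof is a direct unwinding of the definitions of the Pitman transform and of the last passage value over a two-line ensemble, so the plan is essentially to evaluate the two contributions $i=1$ and $i=2$ on the right-hand side and match them against \eqref{eq: pitmantrans}.

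First I would treat the term $i=1$. A path from $(0,1)$ to $(t,1)$ has $\ell=m=1$ and makes no jumps, so it is forced to be the constant path $\pi\equiv 1$ on $[0,t]$; reading Definition \ref{def: length} in this degenerate case, its length is simply the increment $f_1(t)-f_1(0)$ of the bottom line. Hence $f[(0,1)\to(t,1)]=f_1(t)-f_1(0)$, and therefore
\[ f_1(0)+f[(0,1)\to(t,1)]=f_1(t)\,. \]

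Next I would treat the term $i=2$. A path $\pi$ from $(0,2)$ to $(t,1)$ is a non-increasing, $\N$-valued cadlag function on $[0,t]$ with $\pi(0)=2$ and $\pi(t)=1$; being non-increasing and integer-valued between these endpoints, it is completely determined by the single jump location $t_1=z\in[0,t]$ at which it passes from line $2$ to line $1$. Applying Definition \ref{def: length} with $\ell=2$, $m=1$ (so that the sum appearing there is empty) gives $\ell(\pi)=f_1(t)-f_1(z)+f_2(z)-f_2(0)$. Taking the supremum over $z\in[0,t]$ and using that $f_2-f_1$ is continuous on the compact interval $[0,t]$ (so the supremum is attained), we get
\[ f_2(0)+f[(0,2)\to(t,1)]=f_1(t)+\max_{s\in[0,t]}\big(f_2(s)-f_1(s)\big)\,. \]

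Finally I would combine the two cases: taking the maximum over $i\in\{1,2\}$ of the two displayed expressions yields
\[ \max_{i=1,2}\{f_i(0)+f[(0,i)\to(t,1)]\}=f_1(t)+\max\Big(0,\ \max_{s\in[0,t]}\big(f_2(s)-f_1(s)\big)\Big)=f_1(t)+G(f_1,f_2)(0,t)\,, \]
which is exactly $Wf_1(t)$ by \eqref{eq: pitmantrans}. There is no substantial obstacle here; the only points requiring a little care are the correct reading of Definition \ref{def: length} in the degenerate case $\ell=m$ (where the length is the plain increment of the single line traversed) and the observation that the maximal gap $G$ encodes precisely the $\max(0,\cdot)$ that reconciles the $i=1$ and $i=2$ contributions.
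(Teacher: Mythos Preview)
Your proof is correct and follows essentially the same approach as the paper: both compute the two last passage values $f[(0,i)\to(t,1)]$ for $i=1,2$ directly from the definitions and match the resulting maximum against the Pitman transform formula \eqref{eq: pitmantrans}. The only cosmetic difference is that the paper phrases the identification of the two contributions by invoking the metric composition law \eqref{eq: composition}, whereas you unpack Definition~\ref{def: length} directly; your route is arguably the more transparent of the two.
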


\begin{proof}
    By definition, 
    \begin{align*}
        Wf_1(t)&= f_1(t)+G(f_1, f_2)(0,t)= f_1(t) + \max\{\displaystyle \max_{s\in[x,y]}(f_2(s)-f_1(s)), 0 \}\\
        &= \max\{\displaystyle \max_{s\in[x,y]}(f_2(s)+f_1(t)-f_1(s)), f_1(t) \}\,.
    \end{align*}
    From \eqref{eq: composition}, we get \(f_1(t) = f_1(0) + f [(0, 1) \to (t, 1)]\) and \begin{equation*} \displaystyle \max_{s\in[0,t]}(f_2(s) + f_1(t) - f_1(s)) = f_2(0) + f [(0, 2) \to (t, 1)]\,.\end{equation*} Combining the above gives the result. 
\end{proof}

Particularly in the case where \(f_1(0) = f_2(0) = 0\), we obtain that 
\begin{equation*}
Wf_1(t) = f [(0, 2) \to (t, 1)]\,.
\end{equation*}
\(Wf\) is commonly referred to as the \textbf{2-melon} (which will be generalised in the following section to the so-called $n$-melons) of $f$, since paths in \(Wf\) avoid each other and thus resemble the stripes of a watermelon.

\subsection{Dyson Brownian motion} 

Fix any $\varepsilon, t > 0$ and let $B^n$ be the collection of $n$ independent Brownian motions with initial conditions $B^n_i(0) = 0$ conditioned not to intersect on $[\varepsilon, t]$ (note the non-intersection event has positive probability). Then, as $\varepsilon \searrow 0$, $t \nearrow \infty$, Kolmogorov's extension theorem~gives that the $B^n$ converges in law to a limiting process, namely, $n$-level Dyson Brownian motion.

An alternative construction is to first take $x\in \R^n_{>}$ and with $ \PP_x$ denoting the law of $n$ independent Brownian motions $B$ started at $x$ and $\hat{\PP}_x$ the law of the {Doob's} $h$-transform of $B$ started at $x$, where $h(x_1, x_2, \cdots, x_n) = \prod_{1\le i < j\le n}(x_i-x_j)_+$. Then the weak limit of $\hat{\PP}_x$ as $\R^n_> \ni x \to 0$ can be realised as a random ensemble with law on paths $\hat{\PP}_{0^+}$ which agrees with the $n$-level Dyson Brownian motion starting from the origin. The advantage of this construction is that it is more amenable to Radon-Nikodym derivative estimates.

It is worth mentioning that the Dyson Brownian motion was initially described as the eigenvalues of $n \times n$ time-dependent Hermitian matrices with entries independent complex-valued Brownian motion, \cite{dyson1962brownian}. 

\subsection{Melons}\label{subsec: melons}An application of the above that is of interest is that of two independent standard Brownian motions (starting from zero) \(B = (B_1, B_2)\).  Let \(\hat{B}=(\hat{B}_1, \hat{B}_2)\) be two independent Brownian motions conditioned not to collide, in the sense of Doob (a $2$-Dyson Brownian motion). Then, the law of the melon \(WB\) as defined above in (\ref{eq: pitmantrans}) is the same as that of \(\hat{B}\). In \cite{o2002representation}, a generalisation was proved for \(n\) Brownian motions, using a continuous analogue of the Robinson–Schensted–Knuth (RSK) correspondence, where each level in the \emph{$n$-melon} $WB^n = (WB^n_1, WB^n_2, \cdots, WB^n_n)$ is obtained from a family of $n$ Brownian motions by a sequence of deterministic operations that are analogous to the sorting algorithm `bubble sort' where the top curve $WB^n_1$ coincides with the top level of an $n$-Dyson Brownian motion. The term melon comes from the ordering of paths: for some continuous $n-$tuple $f$, $(Wf)^n_1 \ge (Wf)^n_2 \ge \dots \ge (Wf)^n_n$ and their initial value which is $0$, which means they look like stripes on a watermelon. When clear from context, we will abuse notation and drop the superscript, writing instead $Wf$.

In particular, \cite[Proposition 4.1]{DOV} gives an important property of melon paths in that they preserve last passage values (with no restriction on their starting point). In particular, 
\begin{equation*}
WB[(0,n)\to (t,1)] = B[(0,n)\to (t,1)]\,,\quad t\ge 0\,.
\end{equation*}
Using the fact that $WB^n(0) = \underline{0}$ and the ordering of melon paths, one gets that the left-hand-side of the above equation is just $WB_1(t)$. Thus the top line of melon paths is completely characterised in terms of Brownian last passage percolation. For a more complete definition of melons involving the remaining lines, see \cite[sec. 2]{DOV} and \cite{o2002representation}.

\subsection{Airy line ensemble and the Brownian Gibbs property}\label{subsec: Airy line ensemble}
After appropriate rescaling, $WB^n$ converges in law as $n\to \infty$ to a non-intersecting random
ensemble \(\mathcal{A} = (\mathcal{A}_1, \mathcal{A}_2, \dots)\) in $\mathscr{C}^{\N}$ (see Theorem~2.1 in \cite{DOV}), such that $\mathcal{A}_1 > \mathcal{A}_2 > \cdots$. The random ensemble $\mathcal{A}$ is called the (\textbf{parabolic)
Airy line ensemble}. It was introduced by Pr\"{a}hofer and Spohn \cite{prahofer2002scale} in the version \((\mathcal{A}^{\mathrm{stat}}_i)_{i\in \N}\stackrel{\mathrm{def}}{=}(\mathcal{A}_i(\cdot)+(\cdot)^2)_{i\in \N}\), which is stationary
in time, see also \cite{corwin2014brownian} and \cite{corwin2014ergodicity}. We will thus call it the \textbf{stationary Airy line ensemble}. The top line $\mathcal{A}_1$ is known as the parabolic $\text{Airy}_2$ process that appears as the
limiting spatial fluctuation of random growth models starting from a single point.

\begin{theorem}\label{thm: melon scaling Airy}
    Let \(WB^n\)  be a Brownian \(n\)-melon.  Define the rescaled melon \(A^n = (A^n_1, \dots, A^n_n)\) by
\begin{equation*}
A^n_i(y) = n^{1/6} \left((WB^n)_i(1 + 2yn^{-1/3}) - 2\sqrt{n} - 2yn^{1/6} \right).
\end{equation*}
Then \(A^n\) converges to a random sequence of functions \(\mathcal{A} = (\mathcal{A}_1, \mathcal{A}_2, \dots) \in \mathscr{C}^\mathbb{N}\) in law with respect to product of uniform-on-compact topology on \(\mathscr{C}^\mathbb{N}\). For every \(y \in \mathbb{R}\) and \(i < j\), we have \(\mathcal{A}_i(y) > \mathcal{A}_j(y)\). The function \(\mathcal{A}\) is called the \textbf{(parabolic) Airy line ensemble.} 
\end{theorem}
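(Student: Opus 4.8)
The plan is to realise the Brownian $n$-melon $WB^n$ as an object with exact determinantal correlations and then perform a soft-edge asymptotic analysis. First I would invoke the representation discussed in Section \ref{subsec: melons}: by O'Connell's theorem the melon $WB^n$ is an $n$-level Dyson Brownian motion started from the origin, so that the point configuration $\{(WB^n)_i(s)\}_{i=1}^n$, taken jointly over finitely many times $s$, forms a determinantal point process whose space--time correlation kernel is the extended Hermite kernel $\mathcal{K}^{(n)}_{\mathrm{H}}$. The rescaling $A^n_i(y) = n^{1/6}\big((WB^n)_i(1+2yn^{-1/3}) - 2\sqrt n - 2yn^{1/6}\big)$ is exactly the Plancherel--Rotach soft-edge scaling: the spatial window of width $n^{-1/6}$ is centred at the spectral edge $2\sqrt n$, and the time window of width $n^{-1/3}$ produces, after the parabolic shift $2yn^{1/6}$, a non-degenerate limit.

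Next I would carry out the steepest-descent analysis. Starting from a contour-integral representation of $\mathcal{K}^{(n)}_{\mathrm H}$ in terms of Hermite polynomials, one shows that under the above scaling $\mathcal{K}^{(n)}_{\mathrm H}$ converges, locally uniformly in the rescaled spectral and time variables, to the extended Airy kernel $\mathcal{K}_{\mathrm{Ai}}$. This yields convergence of all finite-dimensional distributions of $(A^n_1,\dots,A^n_k)$, evaluated at finitely many points $y$, to those of the determinantal process with kernel $\mathcal{K}_{\mathrm{Ai}}$, and one then \emph{defines} $\mathcal{A}=(\mathcal{A}_1,\mathcal{A}_2,\dots)\in C^{\mathbb{N}}$ to be the (a.s. unique) line ensemble realising these finite-dimensional laws.

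To promote finite-dimensional convergence to convergence in law in $C^{\mathbb{N}}$ with the product uniform-on-compacts topology, I would establish tightness of $(A^n)$; by the product topology it suffices to show tightness of $(A^n_1,\dots,A^n_k)$ in $C([-T,T],\R)^k$ for every $k$ and $T$. For the top line this follows from known one- and two-point fluctuation bounds for Brownian LPP / the largest eigenvalue of Dyson Brownian motion, which give uniform tail control at a point together with a Kolmogorov-type modulus-of-continuity estimate. To reach the lower lines I would propagate this control downward using the interlacing and non-intersection structure of the finite-$n$ melon --- equivalently its Brownian Gibbs resampling property --- which bounds each $A^n_{i+1}$ on compacts in terms of $A^n_i$. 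Combining tightness with the kernel convergence identifies every subsequential limit as $\mathcal{A}$, so the whole sequence converges. Finally, for the strict ordering $\mathcal{A}_i(y)>\mathcal{A}_j(y)$ for $i<j$: the weak limit inherits the non-strict ordering $\mathcal{A}_1\ge \mathcal{A}_2\ge \cdots$ from the melon, while for fixed $y$ the configuration $\{\mathcal{A}_i(y)\}$ is the determinantal Airy point process, whose points are a.s. pairwise distinct; hence the inequalities are strict for $y$ in a countable dense set, and then everywhere by continuity.

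The main obstacle is the combination of the soft-edge kernel asymptotics with the tightness step: the steepest-descent estimates must be uniform enough in the spectral and time variables to control the processes on compacts, and propagating the tail and modulus bounds from the top line --- where one-point estimates are classical --- down to all lower lines, where such estimates degrade, is the delicate point, handled through the non-intersection / Brownian Gibbs structure of the finite-$n$ melon.
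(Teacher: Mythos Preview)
The paper does not supply a proof of this theorem; it is recorded in Section~\ref{subsec: Airy line ensemble} as a preliminary result quoted from \cite[Theorem~2.1]{DOV}, so there is no in-paper argument to compare against. Your sketch is essentially the standard route taken in the literature (determinantal identification of the melon with Dyson Brownian motion, Plancherel--Rotach/soft-edge asymptotics of the extended Hermite kernel to the extended Airy kernel for finite-dimensional convergence, and tightness via the Brownian Gibbs property as in \cite{corwin2014brownian}), and is an accurate high-level outline of how the result is obtained.
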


We now recall the Brownian Gibbs resampling property enjoyed by the Airy line ensemble (see Figure \ref{fig: Gibbs}), first established in \cite{corwin2014brownian}. Informally, it states that for $a< b$, $k\in \N$, the law of the Airy line ensemble restricted to $\{1,2,\cdots,k\}\times(a,b)$, ${\mathcal{A}}|_{\{1,2,\cdots,k\}\times(a,b)}$, conditionally on all the data generated by the Airy line ensemble outside of this region, $\mathscr{F}_k\equiv \sigma(\{A_i(x): (i,x)\notin \llbracket 1,k\rrbracket \times (a,b)\})$, is given by non-intersecting Brownian bridges with entry data $\underline{x} = (\mathcal{A}_i(a))_{1\leq i\leq k}$, $\underline{y} = (\mathcal{A}_i(b))_{1\leq i\leq k}$ and also conditioned to stay above $f = \mathcal{A}_{k+1}$ on $(a,b)$. More precisely, the Brownian Gibbs property allows us to specify the regular conditional distribution
\begin{equation*}
\mathrm{Law}\Bigl(\mathbf{\mathcal{A}}|_{\{1,2,\cdots,k\}\times(a,b)}\, \mathrm{ conditioned\, on }\, \mathscr{F}_k\Bigr) = \mathfrak{B}_{\underline{x},\underline{y}}^{f, [a,b]}\,,
\end{equation*}

\begin{figure}[t]
    \centering
    \begin{tikzpicture}[scale = 1.2, 
    block/.style={draw, thick, minimum height=3.5cm, minimum width=3.5cm}]
    
    \draw[black, thick] (0,0) -- (0.5,1) -- (0.75,0.7) -- (1,1.3) -- (1.5,1.2) -- (2,2.4) -- (2.5, 2.7) -- (3, 3) node[above right] {\(WB_1\)};
    \draw[black, thick] (0,0) -- (0.5,0.7) -- (0.75,0.5) -- (1,0.5) -- (1.5,1) -- (2,1.4) -- (2.5, 2) -- (3, 1.5) node[above right] {\(WB_2\)};
    \draw[black, thick] (0,0) -- (0.5,-0.3) -- (1,0.1) -- (1.5,0.5) -- (1.75, 0) -- (2,-0.2) -- (2.5, 0.8) -- (2.75, 0.3) -- (3, -0.3) node[above right] {\(WB_3\)};
    \draw[black, thick] (0,0) -- (0.5,-1) -- (1,-0.3) -- (1.5,-0.2) -- (2,-0.7) -- (2.5, -1.2) -- (3, -1) node[above right] {\(WB_4\)};
    
    \begin{scope}[on background layer]
    \draw[blue, thick, fill=blue!10] (1, 0.8) -- (2.5, 1.3) -- (2.5, 2.3) -- (1, 1.8) -- cycle;
    \draw[blue, thick] (0.8,0.8) -- (0.8,1.8) node[midway, left] {\(O(n^{-1/6})\)};
    \draw[blue, thick] (0.75,1.8) -- (0.85,1.8);
    \draw[blue, thick] (0.75,0.8) -- (0.85,0.8);

    \draw[blue, thick] (1,2) -- (2.5,2.5) node[midway, above, pos = 0.7] {\(O(n^{-1/3})\)};
    \draw[blue, thick] (1,1.95) -- (1,2.05);
    \draw[blue, thick] (2.5,2.45) -- (2.5,2.55);
    
    \end{scope}

    \end{tikzpicture}
    \caption{Brownian melon scaling limit. Above is a realisation of the $WB^4$ melon. `Zooming in' \space on the parallelogram at small scales and taking the limit as \(n\to\infty\) yields the convergence in law to the (parabolic) Airy line ensemble.}
    \label{fig: Brownian melon}
\end{figure}

\begin{figure}[t]
  \centering
  \includegraphics[width = 0.4\linewidth]{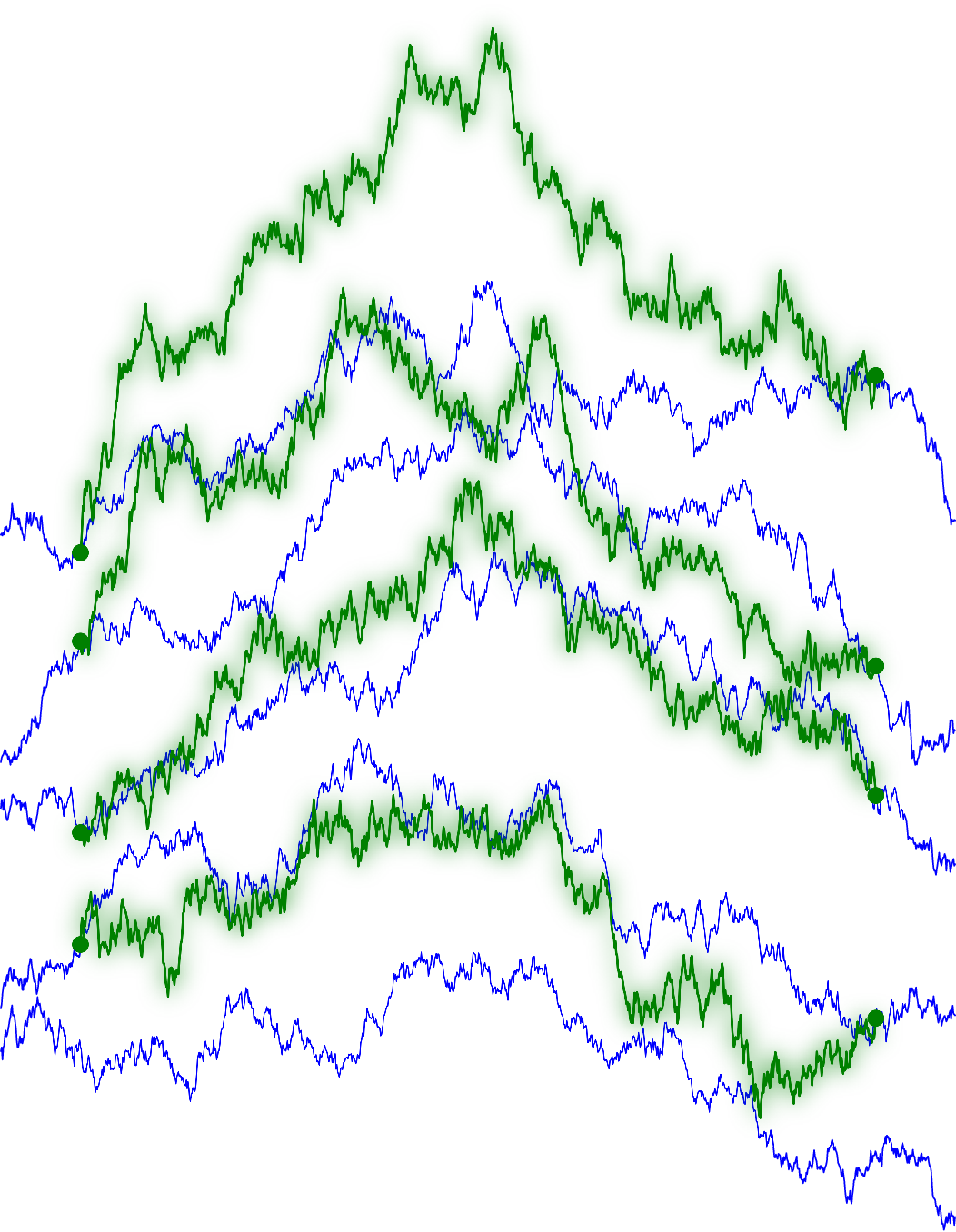}
  \caption{Figure illustrating the Brownian Gibbs property on the first four lines of the parabolic Airy Line ensemble \(\mathcal{A} = \{\mathcal{A}_1>\mathcal{A}_2>\dots\}\) (in \color{blue}blue\color{black}) between two sets of points (black dots). The \color{Green} green \color{black} curves represent resampled versions of first four lines in the ensemble between the endpoints, conditioning on $\mathscr{F}_4$.}
  \label{fig: Gibbs}
\end{figure}

where
\begin{equation*}
\mathfrak{B}_{\underline{x},\underline{y}}^{f, [a,b]} (\cdot):=\frac{\mathfrak{B}^{[a,b]}_{\underline{x},\underline{y}}(\cdot\cap \mathrm{NoInt}([a,b], f)}{\mathfrak{B}^{[a,b]}_{\underline{x},\underline{y}}(\mathrm{NoInt}([a,b], f))}\,.\end{equation*}

Notice that for fixed data $\underline{x}, \underline{y}, f$ with $x_k> f(a), y_k> f(b)$ , the measure $\mathfrak{B}_{\underline{x},\underline{y}}^{f, [a,b]}$ is absolutely continuous with respect to $\mathfrak{B}^{[a, b]}_{\underline{x},\underline{y}}$, that is the law of $k$ independent Brownian bridges on $[a,b]$ starting at $(a,x_i)$ and ending at $(b,y_i)$ respectively, for $1\leq i\leq k$. 

We now include the following global modulus of continuity result from \cite{wu2025applicationsoptimaltransportdyson} obtained using techniques from optimal transport, using the fact that the Dyson Brownian motion can be viewed as a log-concave perturbation of Brownian motion, and is inherited by a large class of random ensembles, including the stationary Airy line ensemble (see Theorem~\ref{thm: melon scaling Airy} and the remark thereafter). It essentially shows that lines in the stationary Airy line ensemble have the same modulus of continuity as that of Brownian motion.

\begin{proposition}(\cite[Corollary 1.4]{wu2025applicationsoptimaltransportdyson})\label{prop: global modulus airy}
    There exist universal constants $C_1, C_2>0$ such that for any $a<b$, $j\in\mathbb{N}$, and  $K\geq 0$, 
    \begin{align}\label{equ:Airybeta2}
    \mathbb{P}\left( \sup_{t,s\in [a,b], t\neq s}  \frac{|\mathcal{A}_j(t)-\mathcal{A}_j(s)+t^2-s^2|}{\sqrt{|t-s| \log (2(b-a)/|t-s|)}} > K \right)\leq C_1\mathrm{e}^{-C_2K^2}.
\end{align}  
\end{proposition}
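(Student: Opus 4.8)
The plan is to recognise the quantity in the statement as the modulus of continuity of the \emph{stationary} Airy line ensemble, to reduce it by weak convergence to a bound for finite Dyson Brownian motions that is uniform in the number of lines, and then to exploit the fact that Dyson Brownian motion is a log-concave perturbation of Brownian motion, so that the Gaussian (Lévy) modulus of continuity transfers with dimension-free constants through an optimal-transport contraction. I would not attempt the Brownian-Gibbs-plus-chaining route: that yields absolute continuity and a modulus bound on a fixed interval, but it does not readily produce \emph{universal} sub-Gaussian constants, which is exactly the improvement the transport method delivers.

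First I would record the identity $\mathcal{A}_j(t)-\mathcal{A}_j(s)+t^2-s^2=\mathcal{A}^{\mathrm{stat}}_j(t)-\mathcal{A}^{\mathrm{stat}}_j(s)$, so that, writing $\psi_{a,b}(u):=\sqrt{u\log(2(b-a)/u)}$ and $M_{[a,b]}(X):=\sup_{s\neq t\in[a,b]}|X(t)-X(s)|/\psi_{a,b}(|t-s|)$, the left-hand side is $\mathbb{P}(M_{[a,b]}(\mathcal{A}^{\mathrm{stat}}_j)>K)$. The functional $X\mapsto M_{[a,b]}(X)$ is a supremum of the continuous functionals $X\mapsto |X(t)-X(s)|/\psi_{a,b}(|t-s|)$, hence lower semicontinuous for uniform convergence, so by the Portmanteau theorem it suffices to bound $\mathbb{P}(M_{[a,b]}(Y^{(n)}_j)>K)$ uniformly in $n\geq j$, where $Y^{(n)}_i(y):=A^n_i(y)+y^2$ and $A^n$ is the rescaled $n$-melon of Theorem~\ref{thm: melon scaling Airy}, which converges in law on compacts to $\mathcal{A}$, so $Y^{(n)}\to\mathcal{A}^{\mathrm{stat}}$. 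By the melon representation (Section~\ref{subsec: melons} and \cite{o2002representation}) $WB^n$ is an $n$-level Dyson Brownian motion, and an affine time reparametrisation, a spatial rescaling and the subtraction of a deterministic curve (the edge centring together with the parabola) turn $Y^{(n)}$ into an $n$-line Dyson Brownian motion on $[a,b]$ shifted by a fixed smooth path; moreover Brownian scaling maps $M_{[a,b]}$ to $M_{[0,1]}$ while preserving the Dyson structure, so the problem is genuinely independent of the interval.

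Then comes the transport step, which is the heart of the argument. Conditioning the $n$-line Dyson bridge on $[a,b]$ on its endpoint data (and on the line $\mathcal{A}_{j+1}$ below, for the Airy version) exhibits it as the law of $n$ independent Brownian bridges with those endpoints \emph{restricted to the convex event} that the coordinates stay pointwise ordered (and above $\mathcal{A}_{j+1}$) on $(a,b)$: a product Gaussian measure weighted by the indicator $e^{-U}$ of a convex set, hence a log-concave perturbation of a Gaussian. By the (infinite-dimensional) Caffarelli contraction theorem such a measure is the image $T_*\gamma$ of the Gaussian reference $\gamma$ under a map $T$ that is $1$-Lipschitz for the Cameron--Martin geometry. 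The functional $h\mapsto M_{[a,b]}(h)$ is Cameron--Martin Lipschitz with a universal constant, since $|h(t)-h(s)|=|\int_s^t\dot h|\leq\sqrt{|t-s|}\,\|h\|_H$ gives $M_{[a,b]}(h)\leq\|h\|_H/\sqrt{\log 2}$; composing $M_{[a,b]}$ with the ($1$-Lipschitz) coordinate projection onto line $j$ and with $T$ produces a functional on Wiener space that is Lipschitz with a universal constant, so by the Gaussian concentration inequality $M_{[a,b]}(Y^{(n)}_j)$ has a sub-Gaussian tail around its mean with universal variance proxy. A comparison argument---the contraction cannot enlarge the Gaussian width of the supremum, and the Gaussian reference has expected modulus bounded by Lévy's theorem---bounds the conditional mean by a universal constant plus a multiple of $|\mathcal{A}^{\mathrm{stat}}_j(a)-\mathcal{A}^{\mathrm{stat}}_j(b)|/\sqrt{b-a}$; averaging over the endpoints and the lower line, and separating the event that this endpoint increment is atypically large, gives $\mathbb{P}(M_{[a,b]}(\mathcal{A}^{\mathrm{stat}}_j)>K)\leq C_1\mathrm{e}^{-C_2K^2}$ for $K$ large, and hence for all $K\geq 0$ after enlarging $C_1$.

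The hard part will be making the transport step rigorous in path space while keeping every constant universal. Caffarelli's theorem is classically finite-dimensional, so one either runs it on the finite-$n$ Dyson bridges and passes to the limit or invokes an infinite-dimensional contraction result; in either case one must check that the modulus functional---a supremum over a continuum of linear functionals which is \emph{not} Lipschitz for the uniform norm---interacts correctly with the Cameron--Martin-Lipschitz transport map, and that the bound on the conditional mean, the treatment of the random endpoint increments, and the contribution of the lower barrier $\mathcal{A}_{j+1}$ are all uniform in $n$, in $j$ and in the interval length $b-a$. A secondary point, largely routine, is to confirm that adding back the parabola and the deterministic edge centring only shifts the ensemble by a fixed curve, so that it remains a log-concave perturbation and the increment functional (which already subtracts $t^2-s^2$) is unaffected.
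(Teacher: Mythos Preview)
The paper does not prove this proposition: it is quoted verbatim as \cite[Corollary~1.4]{wu2025applicationsoptimaltransportdyson}, with only a one-sentence description of the method (``obtained using techniques from optimal transport, using the fact that the Dyson Brownian motion can be viewed as a log-concave perturbation of Brownian motion''). Your sketch is therefore not being compared against a proof in this paper, but against the strategy of the cited source, and on that score your outline is on target: the reduction to the stationary ensemble, the passage through the prelimiting Dyson Brownian motion, and the Caffarelli-type contraction from a log-concave perturbation of a Gaussian to obtain dimension-free sub-Gaussian concentration for a Cameron--Martin Lipschitz functional are precisely the ingredients alluded to.

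One point to flag: your conditioning on the lower line $\mathcal{A}_{j+1}$ mixes the Brownian--Gibbs viewpoint with the transport viewpoint. In Wu's setting one works directly with the $n$-line Dyson ensemble, whose density carries the full Vandermonde weight $\prod_{i<j}(x_i-x_j)$ and is log-concave as a perturbation of the product Gaussian on \emph{all} $n$ coordinates simultaneously; one then projects onto the $j$-th coordinate. There is no separate lower barrier to handle, and this is part of why the constants come out uniform in $j$. Your worry about making the infinite-dimensional Caffarelli step rigorous is well placed, and is indeed handled in the cited work by running the contraction at the finite-$n$ level and passing to the limit.
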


These improved bounds on the modulus of continuity of the stationary line ensemble allow us to state the following refinement of \cite[Lemma 2.3]{dauvergne2024wienerdensitiesairyline} that will be needed in the later sections. It gives sub-Gaussian tails for the fluctuations of the Airy lines across indices, while also improving the dependence on the depth of the Airy line ensemble.

\begin{corollary}\label{cor: global mod airy M}
    Fix $t > 0$, then for every $m \in \N$, let
    \begin{align*}
    M = \max_{r, r' \in [-t, t]} |\mathcal{A}_{m+1}(r) - \mathcal{A}_{m+1}(r')| + \max_{i \in \llbracket 1, m\rrbracket } |\mathcal{A}_i(t) - \mathcal{A}_i(-t)|\,.
	\end{align*}
    We have that there exist some positive constants $C_1, C_2,d > 0$ independent of $t, m$ such that for all $a>0$,
     \begin{equation*}
     \PP(M > a) \le C_1 m \mathrm{e}^{dt^3} \mathrm{e}^{-C_2 a^2/t}\,.
     \end{equation*}
\end{corollary}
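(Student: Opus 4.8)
The plan is to reduce the estimate entirely to Proposition~\ref{prop: global modulus airy} for the stationary Airy line ensemble $\mathcal{A}^{\mathrm{stat}}_j(\cdot)=\mathcal{A}_j(\cdot)+(\cdot)^2$, to split $M$ into its two summands, and to take a union bound over the first $m$ lines. Write $M=M_1+M_2$ with
\[
M_1=\max_{r,r'\in[-t,t]}|\mathcal{A}_{m+1}(r)-\mathcal{A}_{m+1}(r')|,\qquad M_2=\max_{i\in\llbracket 1,m\rrbracket}|\mathcal{A}_i(t)-\mathcal{A}_i(-t)|.
\]

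First I would handle $M_2$. Since the two parabolic shifts agree at $\pm t$, they cancel exactly, so $\mathcal{A}_i(t)-\mathcal{A}_i(-t)=\mathcal{A}^{\mathrm{stat}}_i(t)-\mathcal{A}^{\mathrm{stat}}_i(-t)$. Applying Proposition~\ref{prop: global modulus airy} on $[-t,t]$ with line index $i$, evaluated at the two endpoints $-t$ and $t$ (for which the increment has size $2t$ and the logarithm in the denominator equals $\log 2$), and then union-bounding over $i\in\llbracket1,m\rrbracket$, yields $M_2\le K\sqrt{2t\log 2}\le 2K\sqrt t$ outside an event of probability at most $C_1 m\,\mathrm{e}^{-C_2K^2}$. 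For $M_1$ I would write $\mathcal{A}_{m+1}(r)-\mathcal{A}_{m+1}(r')=\bigl(\mathcal{A}^{\mathrm{stat}}_{m+1}(r)-\mathcal{A}^{\mathrm{stat}}_{m+1}(r')\bigr)-(r^2-(r')^2)$, noting that on $[-t,t]$ the parabolic part is bounded by $\sup_{r,r'\in[-t,t]}|r^2-(r')^2|=t^2$. Proposition~\ref{prop: global modulus airy} on $[-t,t]$ with line index $m+1$ then gives, outside an event of probability at most $C_1\mathrm{e}^{-C_2K^2}$, that $|\mathcal{A}^{\mathrm{stat}}_{m+1}(r)-\mathcal{A}^{\mathrm{stat}}_{m+1}(r')|\le K\sqrt{|r-r'|\log(4t/|r-r'|)}$ for all $r,r'$; since an elementary computation gives $\sup_{0<u\le 2t}u\log(4t/u)=4t/\mathrm{e}$, on this event $M_1\le 2K\sqrt{t/\mathrm{e}}+t^2\le 2K\sqrt t+t^2$.

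Combining the two bounds, and using $m\ge1$ to absorb the factor $m+1$ in the union bound into $m$, I obtain universal constants $c_1,C_1'>0$ with
\[
\PP\bigl(M>c_1K\sqrt t+t^2\bigr)\le C_1'\,m\,\mathrm{e}^{-C_2K^2}\qquad\text{for all }K\ge0.
\]
The final step is to invert this into a bound valid for every $a>0$. If $a>2t^2$, then $a-t^2>a/2$, so the choice $K=(a-t^2)/(c_1\sqrt t)$ gives $c_1K\sqrt t+t^2=a$ and hence $\PP(M>a)\le C_1'm\,\mathrm{e}^{-C_2a^2/(4c_1^2t)}$, already of the asserted form. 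If $0<a\le 2t^2$, then $a^2/t\le 4t^3$, so choosing $d$ large enough relative to the other constants (and $C_1\ge1$) makes the asserted right-hand side $C_1m\,\mathrm{e}^{dt^3}\mathrm{e}^{-C_2a^2/t}\ge1\ge\PP(M>a)$, and the inequality is trivial. Taking constants that serve both regimes finishes the proof.

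The hard part is really only this last inversion in the small-$a$ regime $a\lesssim t^2$, where the modulus-of-continuity input is useless — the exceptional probability need not be small — and the estimate survives purely thanks to the slack factor $\mathrm{e}^{dt^3}$, which forces the right-hand side above $1$. Everything else is routine once Proposition~\ref{prop: global modulus airy} is in hand, the only genuine computations being the optimisation $\sup_{0<u\le 2t}u\log(4t/u)=4t/\mathrm{e}$ and the exact cancellation of the parabola at the symmetric points $\pm t$ (versus its harmless additive contribution $t^2$ over the full interval $[-t,t]$); the improvement over \cite[Lemma 2.3]{dauvergne2024wienerdensitiesairyline} in the dependence on the depth $m$ is automatic, since the constants in Proposition~\ref{prop: global modulus airy} are uniform in the line index, so the union bound over the first $m$ lines costs only a factor $m$.
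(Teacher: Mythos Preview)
Your proposal is correct and follows essentially the same approach as the paper: pass to the stationary ensemble via Proposition~\ref{prop: global modulus airy}, bound the parabolic correction deterministically, optimise $\sqrt{u\log(ct/u)}\lesssim\sqrt t$, union-bound over the $m+1$ lines, and absorb the additive $t^2$ (or $t^{3/2}$ in the paper's parametrisation) into the slack factor $\mathrm{e}^{dt^3}$. Your write-up is somewhat cleaner than the paper's in that you make the case split $a\gtrless 2t^2$ explicit and observe the exact parabolic cancellation at the symmetric endpoints $\pm t$ for $M_2$, but there is no genuine methodological difference.
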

\begin{proof}
    To prove the bound on $M$, we apply Proposition \ref{prop: global modulus airy} to the process $\mathcal{A}_{m+1}(r), r \in [0, t]$ using the estimates for $r, r+\varepsilon\in [0,t]$ from Proposition \ref{prop: global modulus airy} (with $a = 0, b = t$ in that proposition), to obtain for all $s> 0$
    \begin{align*}
    &\displaystyle\PP\left(\max_{r,r+\varepsilon\in [0,t]}|\mathcal{A}_{m+1}(r) - \mathcal{A}_{m+1}(r + \varepsilon)|> s\sqrt{t}\right)\\
    &\le \displaystyle\PP\left(\max_{r,r+\varepsilon\in [0,t]}|\mathcal{A}_{m+1}(r) + r^2 - \mathcal{A}_{m+1}(r + \varepsilon)-(r+\varepsilon)^2| + \varepsilon^2 + 2\varepsilon r > s\sqrt{t}\right)\\
    &\le \displaystyle\PP\left(\max_{r,r+\varepsilon\in [0,t]}|\mathcal{A}_{m+1}(r) + r^2 - \mathcal{A}_{m+1}(r + \varepsilon)-(r+\varepsilon)^2| + 2\varepsilon t > s\sqrt{t}\right)\,,
    \end{align*}
    (since $\varepsilon^2 + 2\varepsilon r \le 2\varepsilon t$). Moreover, as $\varepsilon \in [0,t]$,
    \begin{equation*}
    \sqrt{\varepsilon\log 2t/\varepsilon}\le \displaystyle\sup_{x\in (0,1]}\sqrt{x\log2/x} t^{1/2} \le Ct^{1/2}
    \end{equation*}
    for some constant $C>0$;
    multiplying and dividing both sides of the term containing the Airy process by $\sqrt{\varepsilon\log 2t/\varepsilon}$ shows that the above probability is
    \begin{align*} 
    & \quad \le  \displaystyle\PP\left(\max_{r,r+\varepsilon\in [0,t]}\frac{|\mathcal{A}_{m+1}(r) + r^2 - \mathcal{A}_{m+1}(r + \varepsilon)-(r+\varepsilon)^2|}{\sqrt{\varepsilon\log(2t/\varepsilon)}} > c(s- 2t^{3/2})_+\right)\\
    & \stackrel{\text{Prop.}\ref{prop: global modulus airy}}{\le} C_1 \exp \left( - C_2 (s - 2t^{3/2})^2_+\right)\\
    & \quad \le C_1 \mathrm{e}^{dt^3}\exp(-C_2 s^2)\,,
    \end{align*}
     for some $C_1, C_2, c, d>0$ universal constants. We thus obtain by a union bound that there exist some $C_1, C_2,d > 0$ independent of $t, m$ such that for all $a>0$,
     \begin{equation*}
     \PP\left(\max_{r, r' \in [-t, t]} |\mathcal{A}_{m+1}(r) - \mathcal{A}_{m+1}(r')|> a\right) \le C_1 \mathrm{e}^{dt^3} \mathrm{e}^{-C_2 a^2/t}\,.
     \end{equation*}
     A similar argument for each of the Airy lines $\mathcal{A}_i, i \in \llbracket 1, m\rrbracket $, and union bounds, give the result.
\end{proof}

We also obtain the following proposition which is a slight variation of Proposition \ref{prop: global modulus airy}, giving sub-Gaussian concentration for the modulus of continuity of the parabolic Airy line ensemble over a fixed interval at any given depth. 
\begin{proposition}\label{prop: parabolic airy level conc}
    Fix $0<s<t$, then for every $m \in \N$, the following tail bounds hold for all $a>0$.
    \begin{align*}
    &\displaystyle\PP\left(\max_{r,r+\varepsilon\in [s,t]}|\mathcal{A}_{m}(r) - \mathcal{A}_{m}(r + \varepsilon)|> a\right)\le C_1 \mathrm{e}^{dt^2(t-s)}\exp(-C_2 a^2/(t-s))\,,
    \end{align*}
     for some $C_1, C_2, c, d>0$ universal constants.  
\end{proposition}

\begin{proof}
       Recall the definition of the stationary Airy line ensemble $\mathcal{A}^{\mathrm{stat}}$ from the Remark after Theorem~\ref{thm: melon scaling Airy}. Now, use Proposition \ref{prop: global modulus airy} (applied for $r, r+\varepsilon\in [s,t]$ ) to obtain for all $a> 0$ 
    \begin{align*}
    &\displaystyle\PP\left(\max_{r,r+\varepsilon\in [s,t]}|\mathcal{A}_{m}(r) - \mathcal{A}_{m}(r + \varepsilon)|> a\sqrt{t-s}\right)\\
    &\le \displaystyle\PP\left(\max_{r,r+\varepsilon\in [s,t]}|\mathcal{A}_{m}(r) + r^2 - \mathcal{A}_{m}(r + \varepsilon)-(r+\varepsilon)^2| + \varepsilon^2 + 2\varepsilon r > a\sqrt{t-s}\right)\\
    &\le \displaystyle\PP\left(\max_{r,r+\varepsilon\in [s,t]}|\mathcal{A}_{m}(r) + r^2 - \mathcal{A}_{m}(r + \varepsilon)-(r+\varepsilon)^2| + 2 t (t-s) > a\sqrt{t-s}\right)\,,
    \end{align*}
    (since $\varepsilon^2 + 2\varepsilon r \le 2t(t-s)$). Moreover, as $\varepsilon \in [0,t-s]$,
    \begin{equation*}
    \sqrt{\varepsilon\log 2(t-s)/\varepsilon}\le \displaystyle\sup_{x\in (0,1]}\sqrt{x\log2/x} (t-s)^{1/2} \le C(t-s)^{1/2}
    \end{equation*}
    for some constant $C>0$,
    multiplying and dividing both sides of the term containing the Airy process by $\sqrt{\varepsilon\log 2(t-s)/\varepsilon}$ shows that the above probability is
    \begin{align*} 
    & \quad \le  \displaystyle\PP\left(\max_{r,r+\varepsilon\in [0,t]}\frac{|\mathcal{A}_{m}(r) + r^2 - \mathcal{A}_{m}(r + \varepsilon)-(r+\varepsilon)^2|}{\sqrt{\varepsilon\log(2t/\varepsilon)}} > c(a - 2t(t-s)^{1/2})_+\right)\\
    & \stackrel{\text{Prop.}\ref{prop: global modulus airy}}{\le} C_1 \exp \left( - C_2 (a - 2t(t-s)^{1/2})^2_+\right)\\
    & \quad \le C_1 \mathrm{e}^{dt^2(t-s)}\exp(-C_2 a^2)\,,
    \end{align*}
     for some $C_1, C_2, c, d>0$ universal constants. 
\end{proof}

\subsection{Brownian bridge properties and lemmas}\label{subsection: some basic lemmas}

Here we put together a few standard facts and basic lemmas on Brownian bridges, that will be needed in the later sections.

We first record a key monotonicity lemma for Brownian bridges.

\begin{lemma}(Monotonic coupling)\label{lemma: bridge monotonicity}
Let $[s, t], J$ be closed intervals in $\R$ with $J \subseteq [s, t]$, let $\underline{x}^1 \le \underline{x}^2, \underline{y}^1 \le \underline{y}^2 \in \R^k_>$ where $\le$ is the coordinate-wise partial order, and let $g_1, g_2$ be two bounded Borel measurable functions from $[s,t] \to \R \cup \{-\infty\}$ such that $g_1(x)\le g_2(x)$ for all $x \in [s,t]$. For $i = 1, 2$, let $B^i$ be a $k$-tuple of Brownian bridges from $(s, \underline{x}^i)$ to $(t, \underline{y}^i)$, conditioned on the event $\mathrm{NoInt}(J, g_i)$ (recall the definition from \eqref{eq: noint}). Then there exists a coupling such that $B^1_j(r) \le B^2_j(r)$ for all $r \in [s, t], j \in \llbracket 1, k\rrbracket$.
\end{lemma}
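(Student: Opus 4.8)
The plan is to prove the monotonic coupling via a sequential (line-by-line) resampling scheme built on the standard two-path monotonicity for Brownian bridges, combined with a limiting argument to handle the conditioning on the non-intersection-and-barrier events. First I would recall the base case $k=1$: for a single Brownian bridge conditioned to stay above a lower barrier $g_i$ on $J$, the monotone coupling in $(\underline{x}^i,\underline{y}^i,g_i)$ is classical. One realises both bridges as deterministic increasing functionals of the same driving Brownian bridge (or, more robustly, approximates the conditioning $\mathrm{NoInt}(J,g_i)$ by conditioning on a finite mesh of points $g_i(r)+\delta$ for $r$ in a finite subset of $J$, using reflection/explicit bridge densities, and passing $\delta\searrow 0$ and refining the mesh). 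The key structural input is that the map sending boundary data and barrier to the law of the conditioned bridge is monotone with respect to stochastic domination, which follows from the fact that the density of a Brownian bridge is totally positive of order two (log-concavity / FKG-type monotonicity), so raising the endpoints or the barrier stochastically raises the path.

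The main step is the induction on $k$. I would condition on the bottom line: given $B^i_k$, the top $k-1$ lines of $B^i$ are a $(k-1)$-tuple of Brownian bridges from $(s,\underline{x}^i_{1:k-1})$ to $(t,\underline{y}^i_{1:k-1})$ conditioned on $\mathrm{NoInt}(J,B^i_k)$; that is, the barrier for the top $k-1$ lines is now the random path $B^i_k$ itself. So the scheme is: first couple $B^1_k \le B^2_k$ using the base case applied with barrier $g_1 \le g_2$ (note the top $k-1$ lines act, conditionally, only through their constraint to exceed the $k$-th, but one must be slightly careful — the correct statement is to couple the full vectors, so I would instead set up the induction by resampling from the bottom up: sample $B^1_k$ and $B^2_k$ monotonically coupled via the $k=1$ result with barriers $g_1,g_2$; then, conditionally on these, apply the inductive hypothesis at level $k-1$ with boundary data $\underline{x}^i_{1:k-1}\le$, $\underline{y}^i_{1:k-1}\le$ coordinate-wise and barriers $B^1_k \le B^2_k$ to couple the remaining lines monotonically). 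The inductive hypothesis applies because $B^1_k \le B^2_k$ as Borel functions on $[s,t]$, matching the hypothesis $g_1 \le g_2$ of the lemma at level $k-1$. One then checks that the resulting joint law has the correct marginals — each $B^i$ is a $k$-tuple conditioned on $\mathrm{NoInt}(J,g_i)$ — which follows from the Gibbs/tower property of the conditioned non-intersecting bridge measure (the bottom line's marginal together with the conditional law of the top $k-1$ lines reconstitutes the full measure $\mathfrak{B}^{[s,t]}_{\underline{x}^i,\underline{y}^i}(\cdot \mid \mathrm{NoInt}(J,g_i))$).

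I expect the main obstacle to be making the base-case monotonicity fully rigorous in the presence of the barrier conditioning on a set $J$ that need not be all of $[s,t]$ and with a possibly $-\infty$-valued, merely bounded-below-by-$-\infty$ Borel $g_i$ (here bounded, which helps). The clean route is the finite-dimensional approximation: replace $\mathrm{NoInt}(J,g_i)$ by the event that the path exceeds $g_i(r_\ell)$ at finitely many points $r_1,\dots,r_N \in J$, for which the conditioned law has an explicit density that is monotone in the data; establish the coupling there by a standard Strassen/FKG argument (or by the explicit construction of the conditioned bridge as a monotone functional of an independent Brownian bridge), then take $N\to\infty$ along a sequence of meshes becoming dense in $J$. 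Monotone couplings are preserved under weak limits (the set $\{B^1 \le B^2\ \text{pointwise on a countable dense set}\}$ is closed, and by path continuity this upgrades to all of $[s,t]$), so the limiting coupling has the desired property. A secondary technical point is verifying that the conditional law of the top $k-1$ lines given the bottom line is indeed of the stated form with barrier $B^i_k$, which is immediate from the product structure of $\mathfrak{B}^{[s,t]}_{\underline{x}^i,\underline{y}^i}$ and the definition \eqref{eq: noint} of $\mathrm{NoInt}$, but should be stated explicitly since it is the engine of the induction.
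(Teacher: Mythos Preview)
Your inductive scheme has a genuine gap in the sampling of the bottom line. When you write ``sample $B^1_k$ and $B^2_k$ monotonically coupled via the $k=1$ result with barriers $g_1,g_2$'', you are implicitly asserting that the marginal law of $B^i_k$ under $\mathfrak{B}^{[s,t]}_{\underline{x}^i,\underline{y}^i}(\,\cdot \mid \mathrm{NoInt}(J,g_i))$ is that of a single Brownian bridge from $(s,x^i_k)$ to $(t,y^i_k)$ conditioned to exceed $g_i$ on $J$. This is false: the true marginal is that law \emph{tilted} by the (path-dependent) acceptance probability that $k-1$ independent bridges with endpoints $\underline{x}^i_{1:k-1},\underline{y}^i_{1:k-1}$ stay ordered and above the path $B^i_k$ on $J$. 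The Gibbs/tower property you invoke correctly identifies the \emph{conditional} law of the top $k-1$ lines given $B^i_k$, but says nothing about the marginal of $B^i_k$ itself. Consequently your one-shot construction produces the wrong joint distribution for $B^i$, and the claimed check ``the resulting joint law has the correct marginals'' fails. (Running the induction from the top down has the symmetric problem: the top line's marginal is tilted by the probability that $k-1$ bridges fit below it and above $g_i$.)

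The paper does not give its own proof; it cites Corwin--Hammond (Lemmas~2.6--2.7) and Dimitrov (Lemma~2.15), and records the key idea: first establish the coupling for \emph{discrete random walk bridges}, where the conditioning event has positive probability in finite dimensions and one can run a monotone Glauber-type single-site resampling chain (each update resamples one coordinate of one walk given its neighbours, an operation that is manifestly monotone in all inputs), and then pass to the Brownian limit. Your base-case discussion and weak-limit argument are in the right spirit, but the missing ingredient for $k\ge 2$ is either this discrete detour, or a genuine iterative Markov-chain coupling at the continuum level in which one repeatedly resamples a \emph{single} line conditional on all the others (so with both an upper and a lower barrier), verifies that each such step preserves the coordinatewise order, and proves convergence of the chain to the target non-intersecting law. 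Neither of these is the one-shot bottom-up sampling you describe.
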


For a sketch of a proof, see the proof of Lemmas $2.6$ and $2.7$ in \cite{corwin2014brownian}. For a more complete argument, see the proof of Lemma $2.15$ in \cite{dimitrov2021characterisation}. The key idea behind their proof is to first establish a similar result in the discrete setting of random walk bridges which is easier, and then pass to a suitable limit where the random walks converge to Brownian bridges.     

The following basic lemma computes the Radon-Nikodym derivative of a Brownian bridge with respect to a Brownian motion. 

\begin{lemma}\label{lemma: brownian bridge comparison lemma}
    Fix $0<x<y$, $m\in \N$ and let $W(\cdot)$ be a rate two Brownian bridge on $[0, y]$ with endpoints $\underline{0}, \underline{a}\in \R^m$, with law $\mathfrak{B}^{[0, y]}_{\underline{0}, \underline{a}}(\cdot)$ on $\mathscr{C}_{\underline{0}, \underline{a}}([0, y])$. Then the law $\mathfrak{B}^{[0, y]}_{\underline{0}, \underline{a}}(\cdot)$ restricted to $[0,x]$ is absolutely continuous with respect to that of a rate two Brownian motion with law $\mathfrak{B}^{[0, x]}_{\underline{0}, *}(\cdot)$ with Radon-Nikodym derivative for $\mathfrak{B}^{[0, x]}_{\underline{0}, *}$-almost all $\omega$ in $\mathscr{C}_{\underline{0},*}([0, x])$,
    \begin{equation*}
        \frac{\diff\mathfrak{B}^{[0, y]}_{\underline{0}, \underline{a}}|_{[0,x]}}{\diff\mathfrak{B}^{[0, x]}_{\underline{0}, *}}(\omega) = (y/(y-x))^{\frac{m}{2}}\cdot\exp\left(-\frac{y\norm{\omega(x)-x/y\underline{a}}^2}{4x(y-x)}\right)\cdot \exp\left(\frac{\norm{\omega(x)}^2}{4x}\right).
     \end{equation*}
        Moreover, we have that $\diff\mathfrak{B}^{[0, y]}_{\underline{0}, \underline{a}}|_{[0,x]}/\diff\mathfrak{B}^{[0, x]}_{\underline{0}, *}$ is in $L^\infty(\mathfrak{B}^{[0, x]}_{\underline{0}, *})$ with norm estimates
        \begin{equation*}
            \begin{array}{cc}
                 & \norm{\frac{\diff\mathfrak{B}^{[0, y]}_{\underline{0}, \underline{a}}|_{[0,x]}}{\diff\mathfrak{B}^{[0, x]}_{\underline{0}, *}}}_{L^p\left(\mathfrak{B}^{[0, x]}_{\underline{0}, *}\right)} = \frac{(y/(y-x))^{\frac{m}{2}}}{(px/(y-x)+1)^{\frac{m}{2}}}\cdot \exp\left({\frac{x\norm{\underline{a}}^2}{4(y-x)}\big(\frac{p}{(p-1)x+y}-\frac{1}{y}\big)}\right)
            \end{array}
        \end{equation*}
        for all $p>1$ and letting $p\to \infty$,
    \begin{equation*}
        \norm{\frac{\diff\mathfrak{B}^{[0, y]}_{\underline{0}, \underline{a}}|_{[0,x]}}{\diff\mathfrak{B}^{[0, x]}_{\underline{0}, *}}}_{L^\infty\left(\mathfrak{B}^{[0, x]}_{\underline{0}, *}\right)} \le (y/(y-x))^{\frac{m}{2}}\cdot \exp\left({\frac{\norm{\underline{a}}^2}{4y}}\right). 
    \end{equation*}
\end{lemma}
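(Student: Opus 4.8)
The plan is to reduce to the scalar case by independence, identify the Radon--Nikodym derivative via the Markov property of Brownian motion (a Bayes/disintegration computation), and then carry out an elementary one-dimensional Gaussian integral.

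\emph{Step 1: reduction to $m=1$.} By construction $\mathfrak{B}^{[0,y]}_{\underline{0},\underline{a}}$ is the product over $i\in\llbracket 1,m\rrbracket$ of the laws of independent rate-two scalar Brownian bridges on $[0,y]$ from $0$ to $a_i$, and likewise $\mathfrak{B}^{[0,x]}_{\underline{0},*}$ is the corresponding product of scalar rate-two Brownian motion laws on $[0,x]$. Hence it suffices to treat one coordinate: if $\rho_a$ denotes the scalar Radon--Nikodym derivative for endpoint $a$, then $\diff\mathfrak{B}^{[0,y]}_{\underline{0},\underline{a}}|_{[0,x]}/\diff\mathfrak{B}^{[0,x]}_{\underline{0},*}(\omega)=\prod_{i=1}^m\rho_{a_i}(\omega_i)$, and by Tonelli the $L^p$-norm factorizes as the product of the scalar $L^p$-norms.

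\emph{Step 2: identifying the scalar density.} Let $B$ be a rate-two Brownian motion on $[0,y]$ with $B(0)=0$, and write $q_s(z):=(4\pi s)^{-1/2}\exp(-z^2/(4s))$ for the rate-two heat kernel. The scalar bridge law is the conditional law of $B$ given $B(y)=a$. By the Markov property, conditionally on $B(x)=w$ the variable $B(y)$ is independent of $(B(r))_{r\in[0,x]}$ and distributed as a $\mathcal{N}(w,2(y-x))$ random variable, while unconditionally $B(y)\sim\mathcal{N}(0,2y)$. A standard disintegration argument then yields, for every bounded measurable $\Phi$ on $C_{0,*}([0,x],\R)$,
\[
\mathbb{E}\big[\Phi((W(r))_{r\in[0,x]})\big]=\mathbb{E}\left[\Phi((B(r))_{r\in[0,x]})\,\frac{q_{y-x}(a-B(x))}{q_y(a)}\right],
\]
so the restricted bridge law is absolutely continuous with respect to $\mathfrak{B}^{[0,x]}_{\underline{0},*}$ with density $\rho_a(\omega)=q_{y-x}(a-\omega(x))/q_y(a)$. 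Completing the square in the exponent rewrites this as $\rho_a(\omega)=(y/(y-x))^{1/2}\exp(-\tfrac{(a-\omega(x))^2}{4(y-x)}+\tfrac{a^2}{4y})$, which is exactly the $m=1$ instance of the displayed formula; taking the $m$-fold product recovers the general case.

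\emph{Step 3: the $L^p$ and $L^\infty$ estimates.} Since $\rho_a$ depends on $\omega$ only through $\omega(x)$, and $\omega(x)\sim\mathcal{N}(0,2x)$ under $\mathfrak{B}^{[0,x]}_{\underline{0},*}$, we have $\norm{\rho_a}_{L^p}^p=\int_{\R}\rho_a(w)^p q_x(w)\,\diff w$; expanding the exponent as a quadratic in $w$ and completing the square evaluates this integral in closed form, and taking the $p$-th root and multiplying over the $m$ coordinates gives the stated $L^p$ bound. For the $L^\infty$ bound it suffices to observe that $-(a-w)^2/(4(y-x))\le 0$ because $x<y$, so $\rho_a(\omega)\le (y/(y-x))^{1/2}\exp(a^2/(4y))$ pointwise; multiplying over coordinates gives $\diff\mathfrak{B}^{[0,y]}_{\underline{0},\underline{a}}|_{[0,x]}/\diff\mathfrak{B}^{[0,x]}_{\underline{0},*}\le (y/(y-x))^{m/2}\exp(\norm{\underline{a}}^2/(4y))$, which also arises as the $p\to\infty$ limit of the $L^p$ expression. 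There is no conceptual obstacle here; the only real care needed is the bookkeeping in these Gaussian computations, in particular keeping the rate-two normalization (quadratic variation $2t$, variance $2x$ at time $x$) consistent throughout.
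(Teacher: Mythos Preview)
Your proof is correct. The paper arrives at the same Radon--Nikodym derivative by a slightly different presentation: instead of invoking the Markov property and Bayes' rule to obtain $\rho_a(\omega)=q_{y-x}(a-\omega(x))/q_y(a)$ directly, it couples the bridge with a Brownian motion via $W(\cdot)=B^{[0,y]}(\cdot)+\tfrac{(\cdot)}{y}\underline{a}$, re-expresses this on $[0,x]$ as $W(\cdot)=B^{[0,x]}(\cdot)+\tfrac{(\cdot)}{x}N$ with $N$ an independent Gaussian, and reads off the density as $\diff N/\diff B_x$. The two viewpoints are equivalent and both reduce the problem to a ratio of one-dimensional Gaussian densities depending only on $\omega(x)$; your disintegration argument is arguably the more standard textbook route. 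For the $L^\infty$ bound the paper passes to the limit $p\to\infty$ in the exact $L^p$ formula, whereas your direct pointwise observation $-(a-w)^2/(4(y-x))\le 0$ is a touch cleaner.
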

\begin{proof}
    Recalling the notation $f^{[a,b]}$  for an affine shift of a function $f$ on an interval $[a,b]$ vanishing at its endpoints, see (\ref{eq: affine shift}) in Section \ref{sec: notation}, we can couple a Brownian motion $B$  and a Brownian bridge $W$ with endpoints $\underline{0}, \underline{a}\in \R^m$ on $[0,y]$ by performing an affine shift and setting
    $$
    W(\cdot) = B^{[0, y]}(\cdot)+\frac{(\cdot)}{y}\underline{a}\,
    $$
    which we can re-express as 
    $$
    W(\cdot) = B^{[0, x]}(\cdot)+\frac{(\cdot)}{x}N
    $$
    on $[0, x]$ for some $m-$dimensional Gaussian vector $N$ with independent entries having mean $x\underline{a}/y$ and variance $2(y-x)x/y$, that is independent of the affine shift $B^{[0, x]}(\cdot)$ on $[0,x]$ (this can be seen by simply checking that the covariances vanish). Observe that if one were to replace $N$ with $B_x$, one would recover the original Brownian motion; now, a straight-forward computation shows that
    \begin{equation*}
        \frac{\diff\mathfrak{B}^{[0, y]}_{\underline{0}, \underline{a}}|_{[0,x]}}{\diff\mathfrak{B}^{[0, x]}_{\underline{0}, *}} = \frac{\diff N}{\diff B_x}\,
    \end{equation*}
    whence we derive the desired almost sure equality for the Radon-Nikodym derivative and conclude the proof of the first part. Now we fix any $p>1$ and compute
    \begin{align*}
            \norm{\frac{\diff\mathfrak{B}^{[0, y]}_{\underline{0}, \underline{a}}|_{[0,x]}}{\diff\mathfrak{B}^{[0, x]}_{\underline{0}, *}}}^p_{L^p\left(\mathfrak{B}^{[0, x]}_{\underline{0}, *}\right)} & = \frac{(y/(y-x))^{\frac{pm}{2}}}{(4\pi x)^{\frac{m}{2}}}\displaystyle\int_{\R^m}\exp\left(-\frac{py\norm{\underline{z}-x/y\underline{a}}^2}{4x(y-x)}\right)\\
            & \cdot \exp\left(\frac{(p-1)\norm{\underline{z}}^2}{4x}\right)\diff \underline{z} \\
            &= \frac{(y/(y-x))^{\frac{pm}{2}}}{(4\pi x)^{\frac{m}{2}}}\displaystyle\int_{\R^m}\exp\left(-\left(\frac{py}{4x(y-x)}-\frac{(p-1)}{4x}\right)\norm{\underline{z}}^2\right) \\
            &\cdot  \exp\left(\frac{p}{2(y-x)}\underline{z}\cdot \underline{a}-\frac{px}{4y(y-x)}\norm{a}^2\right)\diff\underline{z}\\
            &= \frac{(y/(y-x))^{\frac{pm}{2}}}{(4\pi x)^{\frac{m}{2}}}\displaystyle\int_{\R^m}\exp\left(-\left(\frac{p}{4(y-x)}+\frac{1}{4x}\right)\norm{\underline{z}}^2\right) \\
            &\cdot  \exp\left(\frac{p}{2(y-x)}\underline{z}\cdot \underline{a}-\frac{px}{4y(y-x)}\norm{a}^2\right)\diff\underline{z}\\
             &= \frac{(y/(y-x))^{\frac{pm}{2}}}{(px/(y-x)+1)^{\frac{m}{2}}}\cdot \exp\left({\frac{px\norm{\underline{a}}^2}{4(y-x)}\big(\frac{p}{(p-1)x+y}-\frac{1}{y}\big)}\right)\,.
    \end{align*}
    We now have a uniform bound which allows us to pass to $p\to \infty$ and conclude.
\end{proof}

We now slightly generalise the above, comparing the Brownian bridge in an interval in the interior of its domain to a Brownian motion. 
\begin{lemma}\label{lemma: brownian bridge comparison lemma general endpoints}
    Fix $x<y<z<w$, $m\in \N$ and let $W(\cdot)$ be a rate two Brownian bridge on $[x, w]$ with endpoints $\underline{a}, \underline{b}\in \R^m$ with law $\mathfrak{B}^{[x, w]}_{\underline{a}, \underline{b}}(\cdot)$ on $\mathscr{C}_{\underline{a}, \underline{b}}([x, w])$. Then the law of $W(\cdot)-W(y)$ restricted to $[y,z]$ is absolutely continuous with respect to that of a rate two Brownian motion with law $\mathfrak{B}^{[y, z]}_{\underline{0}, *}(\cdot)$ with Radon-Nikodym derivative for $\mathfrak{B}^{[y, z]}_{\underline{0}, *}$-almost all $\omega$ in $\mathscr{C}_{\underline{0},*}([y, z])$,
    \begin{equation*}
    \begin{array}{cc} 
        \frac{\diff\mathfrak{B}^{[x, w]}_{\underline{a}, \underline{b}}|_{[y,z]}}{\diff\mathfrak{B}^{[0, z-y]}_{\underline{0}, *}}(\omega) &= ((w-x)/(w-x-z+y))^{\frac{m}{2}}\cdot\exp\left(-\frac{(w-x)\norm{\omega(z-y)-(z-y)/(w-x)\underline{a}}^2}{4(z-y)(w-x-z+y)}\right)\\
        & \cdot \exp\left(\frac{\norm{\omega(z-y)}^2}{4(z-y)}\right).
        \end{array}
     \end{equation*}
        Moreover, we have that $\diff\mathfrak{B}^{[x, w]}_{\underline{a}, \underline{b}}|_{[y,z]}/\diff\mathfrak{B}^{[y, z]}_{\underline{0}, *}$ is in $L^\infty(\mathfrak{B}^{[y, z]}_{\underline{0}, *})$ with norm estimates
        \begin{equation*}
            \begin{array}{cc}
                 \norm{\frac{\diff\mathfrak{B}^{[x, w]}_{\underline{a}, \underline{b}}|_{[y,z]}}{\diff\mathfrak{B}^{[y, z]}_{\underline{0}, *}}}_{L^p\left(\mathfrak{B}^{[y, z]}_{\underline{0}, *}\right)} = \frac{((w-x)/(w-x-z+y))^{\frac{m}{2}}}{(px/(w-x-z+y)+1)^{\frac{m}{2}}}
                  \cdot \exp\left({\frac{(z-y)\norm{\underline{a}}^2}{4(w-x-z+y)}\big(\frac{p}{(p-1)(z-y)+w-x}-\frac{1}{w-x}\big)}\right)
            \end{array}
        \end{equation*}
        for all $p>1$ and letting $p\to \infty$,
    \begin{equation*}
        \norm{\frac{\diff\mathfrak{B}^{[x, w]}_{\underline{a}, \underline{b}}|_{[y,z]}}{\diff\mathfrak{B}^{[y, z]}_{\underline{0}, *}}}_{L^\infty\left(\mathfrak{B}^{[y, z]}_{\underline{0}, *}\right)} \le ((w-x)/(w-x-z+y))^{\frac{m}{2}}\cdot \exp\left({\frac{\norm{\underline{a}-\underline{b}}^2}{4(w-x)}}\right). 
    \end{equation*}
\end{lemma}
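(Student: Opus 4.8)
The plan is to obtain this lemma as a corollary of Lemma~\ref{lemma: brownian bridge comparison lemma} via a soft reparametrisation argument, exploiting the fact that the increments of a Brownian bridge measured from an \emph{interior} point, once restricted to a subinterval, are themselves distributed as the restriction of a Brownian bridge with explicitly shifted parameters. Write $M := w - x$ and $L := z - y$, and consider the process $r \mapsto W(y + r) - W(y)$ for $r \in [0, L]$, whose law on $[0,L]$ is what must be identified. Each coordinate is Gaussian, so it suffices to match first and second moments. Starting from the fact that a rate-two Brownian bridge on $[x,w]$ with endpoints $\underline{a}, \underline{b}$ has mean $\underline{a} + \tfrac{s-x}{w-x}(\underline{b} - \underline{a})$ and coordinatewise covariance $\mathrm{Cov}(W(s), W(t)) = 2\tfrac{(s \wedge t - x)(w - s \vee t)}{w - x}$, a direct computation gives $\mathbb{E}[W(y+r) - W(y)] = \tfrac{r}{M}(\underline{b} - \underline{a})$ and, for $0 \le r \le r' \le L$,
\[
\mathrm{Cov}\bigl(W(y+r) - W(y),\, W(y+r') - W(y)\bigr) \;=\; \frac{2\, r\, (M - r')}{M}\,.
\]
These are precisely the mean and covariance of a rate-two Brownian bridge on $[0, M]$ with endpoints $\underline{0}$ and $\underline{b - a}$, restricted to $[0, L]$; hence, after reparametrising $[y,z]$ to $[0,L]$, the law of $W(\cdot) - W(y)$ equals $\mathfrak{B}^{[0, M]}_{\underline{0},\, \underline{b-a}}\big|_{[0,L]}$.

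Given this identification, I would invoke Lemma~\ref{lemma: brownian bridge comparison lemma} with the substitutions $y \mapsto M = w - x$, $x \mapsto L = z - y$ and $\underline{a} \mapsto \underline{b-a}$. This delivers at once the claimed almost-sure formula for the Radon--Nikodym derivative of $\mathfrak{B}^{[x,w]}_{\underline{a}, \underline{b}}\big|_{[y,z]}$ against $\mathfrak{B}^{[0, z-y]}_{\underline{0}, *}$, the $L^p$ norm identity for every $p > 1$, and, in the limit $p \to \infty$, the bound $\bigl((w-x)/(w-x-z+y)\bigr)^{m/2}\exp\bigl(\norm{\underline{a-b}}^2 / (4(w-x))\bigr)$; no new Gaussian integral has to be evaluated, since those were already carried out in the proof of Lemma~\ref{lemma: brownian bridge comparison lemma}. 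Alternatively, one can mimic that proof verbatim: couple $W$ with a rate-two Brownian motion $\beta$ on $[0, M]$ via the affine shift $W(y + r) - W(y) = \beta^{[0, M]}(y - x + r) - \beta^{[0, M]}(y - x) + \tfrac{r}{M}(\underline{b} - \underline{a})$, and then rewrite the right-hand side on $r \in [0, L]$ as $\beta^{[0, L]}(r) + \tfrac{r}{L} N$ for an $m$-dimensional Gaussian vector $N$ with independent coordinates, independent of the affine shift $\beta^{[0,L]}$, whose mean and variance one reads off from the covariance display; the Radon--Nikodym derivative is then $\diff N / \diff \beta_L$, exactly as in Lemma~\ref{lemma: brownian bridge comparison lemma}.

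The only step requiring any care — and the single place where the interior-point feature of the statement genuinely enters — is the moment computation leading to the covariance display, i.e.\ checking that the covariance of increments taken from the interior point $y$ collapses to $\tfrac{2 r (M - r')}{M}$ with no residual dependence on $y$ itself (equivalently, that conditioning on $W(y)$ and centring exactly reproduces a shorter bridge). Once that is in place there is no genuine obstacle: the lemma is simply a translation of Lemma~\ref{lemma: brownian bridge comparison lemma} under the dictionary above.
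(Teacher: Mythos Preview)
Your argument is correct. Your primary route --- identifying the law of $W(y+\cdot)-W(y)$ on $[0,L]$ as $\mathfrak{B}^{[0,M]}_{\underline{0},\,\underline{b-a}}\big|_{[0,L]}$ via a moment computation and then invoking Lemma~\ref{lemma: brownian bridge comparison lemma} with the dictionary $(x,y,\underline{a})\mapsto(L,M,\underline{b-a})$ --- is slightly more economical than the paper's, which instead redoes the decomposition from scratch: it writes $W(\cdot+y)-W(y)=(W(\cdot+y)-W(y))^{[0,z-y]}+\tfrac{(\cdot)}{z-y}(W(z)-W(y))$, identifies the endpoint $W(z)-W(y)$ as an explicit Gaussian vector independent of the bridge part, and reads off the Radon--Nikodym derivative as the density ratio of that endpoint against $B_{z-y}$, referring back to the previous lemma only for the $L^p$ integrals. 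Your ``alternatively'' paragraph is exactly this. What your reduction buys is that no new Gaussian integral needs to be set up; what the paper's approach buys is that it is self-contained and does not require first checking the covariance collapse you flag as the one delicate point.

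One incidental observation: your substitution correctly produces $\underline{b-a}$ in the exponent of the Radon--Nikodym formula, whereas the statement as printed has $\underline{a}$ there (and the paper's own proof has an internal sign slip in computing the mean of $W(z)-W(y)$). The $L^\infty$ bound, which involves $\|\underline{a-b}\|^2$, is consistent with your version.
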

\begin{proof}
    By translation, it suffices to prove the lemma for $x=0$. Observe that we can realise a Brownian bridge $W$ with endpoints $\underline{a}, \underline{b}\in \R^m$ on $[0,w]$ using a Brownian motion $B$ by performing an affine shift and setting
    $$
    W(\cdot) = B^{[0, w]}(\cdot)+\frac{(\cdot)}{w}\underline{b} + \frac{w-\cdot}{w}\underline{a} .
    $$
    Thus, we observe that on $[0,z-y]$, $W(\cdot+y)-W(y)$ has the law of $m$ independent Brownian bridges starting from $\underline{0}$ and  
    $$
    W(z)-W(y) = B(z)-B(y)-\frac{z-y}{w}B_y + \frac{z-y}{w}\underline{a}-\frac{z-y}{w}\underline{b},
    $$
    which has the distribution of a Gaussian vector having independent entries with mean $\frac{z-y}{w}\underline{a}-\frac{z-y}{w}\underline{b}$ and variance $2(z-y)(1-z/w+y/w)$. 
    Hence, we obtain the decomposition
    \begin{equation*}
    W(\cdot+z)-W(y) = (W(\cdot+z)-W(y))^{[0,z-y]} +  \frac{(\cdot)}{z-y}(W(z)-W(y))
    \end{equation*}
    on $[0, z-y]$, where the terms on the right hand side are independent (zero mean and one can check the covariance vanishes). Observe that if one were to replace $(W(z)-W(y))$ with an independent $m$-dimensional Gaussian vector $N$ with coordinate-wise independent entries and having mean zero and variance $2(z-y)$, one would recover the original Brownian motion; now, a straight-forward computation shows that
    \begin{equation}
        \frac{\diff\mathfrak{B}^{[0, w]}_{\underline{a}, \underline{b}}|_{[y,z]}}{\diff\mathfrak{B}^{[0, z-y]}_{\underline{0}, *}} = \frac{\diff\, (W(z)-W(y))}{\diff N}\,,
    \end{equation}
    whence we derive the desired almost sure equality for the Radon-Nikodym derivative and conclude the proof of the first part. For the remaining parts, one proceeds as in the previous lemma.
\end{proof}

We finally end with an estimate on the distribution of the maximum absolute value of a rate two Brownian bridge vanishing at its endpoints, also known as the \emph{Kolmogorov-Smirnov distribution}. Incidentally, this distribution is related to the classical non-parametric
test statistic (bearing the same name) commonly used to compare an empirical distribution function with a reference
distribution, \cite{berger2014kolmogorov}.

\begin{lemma}\label{lemma: brownian bridge maximum}
    Let $T>0$ and consider a rate two Brownian bridge $(W_t)_{t\in [0,T]}$ vanishing at both endpoints, then there is a universal constant $c>0$ such that for all $a>0$, 
    \begin{equation*}
    \PP\left(\displaystyle\max_{0\le t\le T}|W_t|\le a\right) \ge \frac{c\sqrt{T}}{a} \exp \left( - \frac{\pi^2 T}{4a^2}\right)\,.
    \end{equation*}
\end{lemma}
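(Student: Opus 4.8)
The idea is to normalise the time horizon by Brownian scaling and then split into two regimes according to whether the radius is large or small compared with the time horizon; the small-radius regime is the small-deviation estimate for the sup-norm of a scalar bridge, which I would extract from the classical small-ball estimate for Brownian motion together with Anderson's inequality.

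First I would scale: if $W$ is a rate two Brownian bridge on $[0,T]$ vanishing at both endpoints, then $\widehat W_s:=T^{-1/2}W_{Ts}$, $s\in[0,1]$, is a rate two Brownian bridge on $[0,1]$ vanishing at both endpoints, and $\max_{t\in[0,T]}|W_t|=T^{1/2}\max_{s\in[0,1]}|\widehat W_s|$, so $\PP(\max_{[0,T]}|W_t|\le a)=\PP(\max_{[0,1]}|\widehat W_s|\le aT^{-1/2})$. Hence it suffices to produce a universal $c>0$ with $\PP(\max_{[0,1]}|\widehat W_s|\le b)\ge c\,\mathrm{e}^{-\pi^2/(2b^2)}$ for all $b>0$. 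In the regime $b\ge 1$ this is immediate: by monotonicity $\PP(\max_{[0,1]}|\widehat W_s|\le b)\ge \PP(\max_{[0,1]}|\widehat W_s|\le 1)=:c_0$, which is a fixed positive constant (the bridge measure charges every neighbourhood of the zero path in $C([0,1])$), whereas $\mathrm{e}^{-\pi^2/(2b^2)}\le 1$; so the bound holds with any $c\le c_0$.

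It remains to treat $0<b<1$. Here I would start from the standard small-ball identity for rate two Brownian motion $\beta$ on $[0,1]$, namely $\PP(\max_{[0,1]}|\beta_s|\le b)=\tfrac{4}{\pi}\sum_{k\ge 0}\tfrac{(-1)^k}{2k+1}\mathrm{e}^{-(2k+1)^2\pi^2/(4b^2)}$, whose summands are positive and decreasing in $k$, so that the alternating series is at least $\tfrac{4}{\pi}(1-\tfrac13)\mathrm{e}^{-\pi^2/(4b^2)}$. Then I would pass to the bridge by conditioning on the endpoint: decomposing over $\beta_1=y\in[-b,b]$, using that the $N(0,2)$ density of $\beta_1$ is maximal at $y=0$, and using Anderson's inequality applied to the centred Gaussian bridge law on $C([0,1])$ with the symmetric convex ball $\{\|f\|_\infty\le b\}$ and the deterministic translation $s\mapsto sy$ — which gives $\PP(\max_{[0,1]}|B^{\mathrm{br}}_{0\to y}|\le b)\le \PP(\max_{[0,1]}|B^{\mathrm{br}}_{0\to 0}|\le b)$ — one obtains $\PP(\max_{[0,1]}|\widehat W_s|\le b)\ge \tfrac{C}{b}\,\mathrm{e}^{-\pi^2/(4b^2)}$ for a universal $C>0$. (Alternatively one may simply quote the reflection/eigenfunction expansion $\PP(\max_{[0,1]}|\widehat W_s|\le b)=\tfrac{2\sqrt\pi}{b}\sum_{n\ \mathrm{odd}}\mathrm{e}^{-n^2\pi^2/(4b^2)}$ and keep the $n=1$ term.) Since $b<1$ we have $\tfrac{C}{b}\ge C$ and $\pi^2/(4b^2)\le \pi^2/(2b^2)$, hence $\PP(\max_{[0,1]}|\widehat W_s|\le b)\ge C\,\mathrm{e}^{-\pi^2/(2b^2)}$. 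Combining the two regimes proves the lemma with $c=\min(c_0,C)$.

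The only genuinely delicate step is the passage from Brownian motion to the bridge in the regime $b<1$: one needs that $y=0$ essentially maximises $y\mapsto \PP(\max_{[0,1]}|B^{\mathrm{br}}_{0\to y}|\le b)$, and this is exactly what Anderson's inequality supplies. Replacing it by the crude inclusion $\{\max|B^{\mathrm{br}}_{0\to y}|<b\}\subseteq\{\max|B^{\mathrm{br}}_{0\to 0}|<2b\}$ would cost a factor $2$ in the radius and hence a factor $4$, rather than the permissible factor $2$, in the exponent, and would fail to reach the stated rate $\mathrm{e}^{-\pi^2/(2a^2)}$; the bridge-formula route avoids this at the cost of quoting a somewhat less elementary identity. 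Everything else is routine bookkeeping with Brownian scaling and the monotonicity of $b\mapsto \PP(\max_{[0,1]}|\widehat W_s|\le b)$.
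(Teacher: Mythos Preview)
Your proof is correct and takes a genuinely different route from the paper's. Both arguments begin with Brownian scaling to reduce to a bridge on $[0,1]$; the paper then writes the (rate one) bridge as $\tilde W_t=B'_t-tB'_1$ and uses the crude containment $\{\max_{[0,1]}|B'_t|\le a/2\}\subseteq\{\max_{[0,1]}|\tilde W_t|\le a\}$ before invoking Feller's series for $\sup|B'|$. You instead compare the Brownian-motion and bridge small-ball probabilities via Anderson's inequality (or the Jacobi--theta expansion for the bridge), which avoids halving the radius. As you correctly point out, the crude-containment route costs a factor $4$ in the exponent: carried through honestly it yields only $c\exp(-2\pi^2 T/a^2)$, not the sharper $c\exp(-\pi^2 T/(2a^2))$ claimed in the lemma. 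Your Anderson argument does deliver the stated constant. (The paper's final display seems to substitute $a/(2\sqrt T)$ directly into the Feller bound for $B'$, effectively skipping the containment step $(\dagger)$ it set up; your approach makes the sharper exponent legitimate.) The trade-off is that the paper's route is entirely elementary, while yours needs either Anderson's inequality or the bridge eigenfunction expansion as an input.
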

\begin{proof}
Observe that for a rate two Brownian motion $(B_t)_{t\ge 0}$, one has that $(B_t)_{t\ge 0} \stackrel{d}{=} (\sqrt{2}B'_{t})_{t\ge 0}$ where $(B'_{t})_{t\ge 0}$ is a standard Brownian motion. By Brownian scaling and the above, we thus obtain the distributional identities 
    \begin{equation*}
    (W_t)_{t\in [0,T]} \stackrel{d}{=} (B_t-tB_1)_{t\in [0,T]} \stackrel{d}{=} (\sqrt{2}B'_{t}-\sqrt{2}tB'_{1})_{t\in [0,T]}\stackrel{d}{=} (\sqrt{2}W'_t)_{t\in [0,T]}\,,
    \end{equation*}
    where $ (W'_t)_{t\in [0,T]}$ is a standard Brownian bridge vanishing at both endpoint. Hence, by another application of Brownian scaling, we have the distributional equality
    \begin{equation*}
    \displaystyle\max_{0\le t\le T}|W_t| \stackrel{d}{=} \sqrt{2T}\displaystyle\max_{0\le t\le 1}|\tilde{W}_t| 
    \end{equation*}
 where $\tilde{W}$ is a standard (rate one) Brownian bridge vanishing at $0$ and $1$. So it suffices to prove the lower bound for a rate one Brownian bridge and $T=1$. Recall the tail probabilities for the \emph{Kolmogorov-Smirnov distribution}, (see \cite{berger2014kolmogorov})
 \[
 \PP\left(\displaystyle\max_{0\le t\le 1}|\tilde{W}_t|\le a \right) = \frac{\sqrt{2\pi}}{a}\displaystyle\sum_{k=1}^\infty \mathrm{e}^{-(2k-1)^2\pi^2/(8a^2)}\ge \frac{\sqrt{2\pi}}{a}\exp \left( -\frac{\pi^2}{8a^2}\right)\,,\quad a > 0\,.
 \]
Finally, we have the estimate
\begin{equation*}
\PP\left(\displaystyle\max_{0\le t\le T}|W_t|\le a\right)  = \PP\left(\displaystyle\max_{0\le t\le 1}|\tilde{W}_t|\le \frac{a}{\sqrt{2T}}\right) \ge \frac{c\sqrt{T}}{a} \exp \left( - \frac{\pi^2 T}{4a^2}\right)\,,\quad \mbox{ for some constant }c>0\,.
\end{equation*}
\end{proof}

\subsection{Airy line ensemble}\label{sec: Airy two point bounds}

Using the refined modulus of continuity estimates for the Airy line ensemble in Proposition \ref{prop: global modulus airy}, one can obtain control over the fluctuations of the Airy last passage values about the typical Brownian counterpart after some normalisation. This follows from sub-additivity properties of last passage percolation and the Brownian bridge representation for the Airy line ensemble in \cite{dauvergne2021}.
In the following theorem, to ease notation, we will write for $a<b$ and $k\in \N$, the last passage percolation values of the Airy line ensemble to the first line by 
\begin{equation}\label{eq: airy last passage values}
\langle (a,k)\to b \rangle \stackrel{\mathrm{def}}{=}\mathcal{A}[(a,k)\to (b,1)]\,.
\end{equation}

Now, there are two regimes regarding the fluctuations of the value of the Airy line ensemble LPP around its Brownian counterpart's mean on compact intervals,
\begin{equation*}
\frac{|\langle (0, k) \rightarrow x \rangle-{2\sqrt{2kx}}|}{k^{1/2}} > \varepsilon\,\qquad \mbox{ for }\varepsilon >0, k\ge 1, x>0\,,
\end{equation*}
 in which we will be interested: namely when $\varepsilon < k^{1/126}$ and when $\varepsilon > O_x(1)\lor k^{1/126}$. We will be exploiting the bridge representation to study the former and concentration of measure plus sub-Gaussian tails of the moduli of continuity of lines in the Airy line ensemble for the latter. Also note that the parameters in the tail exponents were not optimised and so it may most likely be possible to improve them. This is the content of the following Theorem~.

\begin{theorem}[Theorem 6.7 \cite{DOV}]\label{thm: Airy LPP deviation}
Fix $x>0$, and recall that $\langle (0, k) \rightarrow x \rangle$ is the last passage value across the Airy line ensemble $\mathcal{A}$ from line $k$ at time $0$ to line $1$ at time $x$. Then for all $\varepsilon < k ^{1/126}$,
$$
\PP \left(\frac{|\langle (0, k) \rightarrow x \rangle-{2\sqrt{2kx}}|}{k^{1/2}} > \varepsilon\right)$$
$$
\le c k^2 \mathrm{e}^{dx^3}\left(\exp(-d\varepsilon^{1/2}  k^{1/28}) + \exp\left(-d\varepsilon k^{1/126}/(x\land x^{3/4})\right)\right)\,,
$$
for some universal constants $c,d>0$. Alternatively, in the regime where $\varepsilon > 4\sqrt{2x}$, then
$$
\PP \left(\frac{|\langle (0, k) \rightarrow x \rangle-{2\sqrt{2kx}}|}{k^{1/2}} > \varepsilon\right) \le C \mathrm{e}^{dx^3} k\exp \left(-\frac{d\varepsilon^2}{kx}\right) \qquad k\ge 1\,,
$$
for some universal constants $C, d>0$.
\end{theorem}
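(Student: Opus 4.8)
Write $\mathcal{A}^{\mathrm{stat}}_i=\mathcal{A}_i(\cdot)+(\cdot)^2$ for the stationary lines and recall the shorthand $\langle(a,j)\to b\rangle=\mathcal{A}[(a,j)\to(b,1)]$. Throughout, set $K_j:=\sup_{s\ne t\in[0,x]}\frac{|\mathcal{A}^{\mathrm{stat}}_j(t)-\mathcal{A}^{\mathrm{stat}}_j(s)|}{\sqrt{|t-s|\log(2x/|t-s|)}}$, so that Proposition \ref{prop: global modulus airy} (with $a=0$, $b=x$) gives $\PP(K_j>K)\le C_1\mathrm{e}^{-C_2K^2}$ for every $j\in\N$. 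It suffices to bound the upper and lower tails of $\langle(0,k)\to x\rangle-2\sqrt{2kx}$ separately, and we use a different mechanism in each of the two regimes, treating the simpler (large-deviation) one first.

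\emph{Large deviations, $\epsilon>4\sqrt{2x}$.} Any path $\pi$ from $(0,k)$ to $(x,1)$ occupies each line $j\in\llbracket1,k\rrbracket$ on exactly one subinterval $[a^\pi_j,b^\pi_j]\subseteq[0,x]$, and these subintervals tile $[0,x]$ in order; from Definition \ref{def: length} and $\mathcal{A}_j=\mathcal{A}^{\mathrm{stat}}_j-(\cdot)^2$ one obtains the telescoping identity $\ell(\pi)=\sum_{j=1}^k\big(\mathcal{A}^{\mathrm{stat}}_j(b^\pi_j)-\mathcal{A}^{\mathrm{stat}}_j(a^\pi_j)\big)-x^2$. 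Since $\delta\mapsto\sqrt{\delta\log(2x/\delta)}$ is concave on $[0,x]$ and vanishes at $0$, applying Jensen's inequality to the tiling (which has $\sum_j(b^\pi_j-a^\pi_j)=x$) bounds $\ell(\pi)$ for every $\pi$, and hence $\langle(0,k)\to x\rangle$, by $(\max_{1\le j\le k}K_j)\sqrt{kx\log 2k}$. As $2\sqrt{2kx}+\epsilon k^{1/2}\ge\epsilon k^{1/2}$, a union bound over $j$ then gives $\PP\big(\langle(0,k)\to x\rangle-2\sqrt{2kx}>\epsilon k^{1/2}\big)\le kC_1\exp(-C_2\epsilon^2/(x\log 2k))$, which, since $k/\log 2k$ is bounded below on $\N$, is at most $k\exp(-d\epsilon^2/k)$ after adjusting $d$. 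For the lower tail, the single path making all its jumps at the midpoint $x/2$ has length $\mathcal{A}_1(x)-\mathcal{A}_1(x/2)+\mathcal{A}_k(x/2)-\mathcal{A}_k(0)$, which Proposition \ref{prop: global modulus airy} bounds below by $-C_x(1+K_1+K_k)$; since $\epsilon>4\sqrt{2x}$ forces $2\sqrt{2kx}-\epsilon k^{1/2}\le-\tfrac12\epsilon k^{1/2}$, a further union bound yields a term of order $\exp(-c_x\epsilon^2k)$, absorbed into $k\exp(-d\epsilon^2/k)$.

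\emph{Moderate deviations, $\epsilon<k^{1/126}$.} Here the crude bound above loses a factor $\sqrt{\log 2k}$ and must be replaced by a comparison with Brownian last passage percolation. By the Brownian Gibbs property, conditionally on $\mathcal{F}_k$ the ensemble $(\mathcal{A}_1,\dots,\mathcal{A}_k)|_{[0,x]}$ is a system of $k$ non-intersecting Brownian bridges with endpoints $\underline{u}=(\mathcal{A}_i(0))_{i\le k}$ and $\underline{v}=(\mathcal{A}_i(x))_{i\le k}$ conditioned to lie above $\mathcal{A}_{k+1}$, and $\langle(0,k)\to x\rangle$ is a deterministic functional of this data. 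Using the Brownian bridge representation of \cite{dauvergne2021}, the monotone coupling of Lemma \ref{lemma: bridge monotonicity}, and the sub-additivity of last passage values (Lemma \ref{Lemma: Metric Composition}) to pass between scales by subdividing $[0,x]$ into a suitable number $\ell$ of windows, one couples $\langle(0,k)\to x\rangle$ with the last passage value $\mathcal{B}:=B[(0,k)\to(x,1)]$ of $k$ independent rate-two Brownian motions started at the origin so that $|\langle(0,k)\to x\rangle-\mathcal{B}|\le\mathcal{E}$, where $\mathcal{E}$ collects: (i) the deviation of the Airy boundary data $\underline{u},\underline{v}$ from their Brownian prediction on the relevant scale, controlled by the sub-Gaussian bounds of Corollary \ref{cor: global mod airy M} and Proposition \ref{prop: airy level conc}; (ii) the effect of the lower barrier $\mathcal{A}_{k+1}$, again controlled by Corollary \ref{cor: global mod airy M}; and (iii) the Radon--Nikodym / acceptance-probability cost of removing the conditioning, controlled by the Brownian inverse-acceptance-probability estimates of \cite{dauvergne2024wienerdensitiesairyline}. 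Optimising $\ell$ against these sources and against the depth-dependence in Propositions \ref{prop: global modulus airy} and \ref{prop: airy level conc} shows that $\mathcal{E}\le\tfrac12\epsilon k^{1/2}$ outside an event of probability at most $ck^2\big(\exp(-d\epsilon^{1/2}k^{1/28})+\exp(-d\epsilon k^{1/126})\big)$, the two summands recording the upper- and lower-side comparison errors; the non-optimal exponents $1/28$ and $1/126$ are exactly what this optimisation delivers. Finally $\mathcal{B}$ is the top line at time $x$ of a rate-two $k$-level Dyson Brownian motion, equivalently the largest eigenvalue of a GUE-type $k\times k$ matrix of variance of order $x$, so classical Tracy--Widom moderate-deviation estimates bound $\PP(|\mathcal{B}-2\sqrt{2kx}|>\tfrac12\epsilon k^{1/2})$ by $\exp(-c\epsilon^{3/2}k)$ on the upper side and by $\exp(-c\epsilon^3k^2)$ on the lower side, which are negligible against the error-event probability for $\epsilon$ in the stated range; combining via the triangle inequality closes this regime.

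\emph{Main obstacle.} The substantive step is the comparison in the moderate-deviation regime: tracking, with explicit $k$-dependence, how the random Airy boundary values, the $(k+1)$-st line acting as a lower barrier, and the inverse acceptance probabilities of the Gibbs resampling perturb the last passage value away from its Brownian counterpart. It is the multiscale balancing of these several error sources---not any genuine obstruction---that forces the sub-optimal exponents $1/28$ and $1/126$; a finer treatment of the boundary effects should recover rates closer to the true Tracy--Widom ones.
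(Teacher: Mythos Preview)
Your large-$\epsilon$ argument is essentially the paper's: both bound $|\langle(0,k)\to x\rangle|$ by a sum of moduli of continuity of the $k$ lines and apply Proposition~\ref{prop: global modulus airy} with a union bound. Your Jensen refinement yielding $\sqrt{kx\log 2k}$ rather than the cruder $k\sqrt{x}$ is a nice touch, and the lower-tail single-path bound is fine.

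For the moderate-$\epsilon$ regime, you have the correct overall architecture---invoke the bridge representation of \cite{dauvergne2021}, subdivide time into $\ell\asymp k^{2/3+\gamma}$ windows, compare with Brownian LPP, and use Tracy--Widom for the main term---but your description of the error mechanism is not the one the paper (following \cite[Theorem~6.7]{DOV}) actually uses, and this matters for recovering the stated exponents. The paper does \emph{not} work through the Gibbs property, a lower barrier $\mathcal{A}_{k+1}$, and inverse acceptance probabilities. Instead, \cite[Theorem~7.2]{dauvergne2021} furnishes a direct \emph{coupling} so that $\mathfrak{B}^k|_{\llbracket 1,k\rrbracket\times[0,1]}=\mathcal{A}|_{\llbracket 1,k\rrbracket\times[0,1]}$ except on an event of probability $\lesssim k^{2/3+\gamma}e^{-d\gamma k^{\gamma/12}}$; once on the good event one decomposes $\mathfrak{B}_{k,i}=H_{k,i}+R_{k,i}+X_{k,i}$ with $H_k$ independent Brownian motions, $R_k$ piecewise linear with slopes built from Airy-minus-Brownian increments on the grid, and $X_k$ nonzero only where the associated graph $G_{2k}$ has large components. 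The three error sources are then $\PP(\text{coupling fails})$, $|L(R_k)|\vee|F(R_k)|$, and $|L(X_k)|\vee|F(X_k)|$; the latter is controlled by a box argument counting nonzero segments via \cite[Proposition~6.7]{DOV}. The specific exponents $1/28$ and $1/126$ arise from balancing these three against each other by choosing $\gamma=4/21$ (so that in particular the coupling-failure exponent $\gamma/12$ matches the $X_k$ exponent).

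Your items (ii) and (iii)---barrier effect and acceptance-probability cost---are the error sources of the Section~\ref{sec: finite depth truncation} machinery (Lemmas~\ref{lemma: acceptance prob LB}--\ref{lemma: inv acceptance prob cond}), not of the present proof. That machinery produces bounds of order $e^{cm^7}$ on inverse acceptance probabilities at depth $m$, which is far too crude to extract polynomial-in-$k$ stretched-exponential tails here; the bridge-representation coupling is what makes the quantitative step work. So while your plan names the right black box, the account of what comes out of it and how the errors balance would not, as written, lead to the claimed exponents.
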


\begin{proof}[Proof of Theorem~\ref{thm: Airy LPP deviation}]
We will essentially adapt the arguments from the proof of \cite[Theorem~6.7]{DOV} making use of the improved modulus of continuity estimates for the Airy line ensemble from \cite{wu2025applicationsoptimaltransportdyson}, simplifying parts of the proof, paying close attention to tail probabilities.

First, consider the regime where $\varepsilon > 4\sqrt{2x}$, using Proposition \ref{prop: parabolic airy level conc} and a union bound,
\begin{align*}
& \PP \left(\frac{|\langle (0, k) \rightarrow x \rangle-{2\sqrt{2kx}}|}{k^{1/2}} > \varepsilon\right) \le \PP \left(|\langle (0, k) \rightarrow x \rangle| > \frac{\varepsilon}{2}k^{1/2}\right)\\
& \le \displaystyle\sum_{i=1}^k\PP\left( \displaystyle\sup_{r,r+\theta \in [0,x]}|\mathcal{A}_{i}(r) - \mathcal{A}_{i}(r + \theta)|> \frac{\varepsilon}{2k^{1/2}}\right)\le C_1\mathrm{e}^{dx^3}k\exp \left( -\frac{d\varepsilon^2}{kx}\right) \,,\qquad k\ge 1\,,
\end{align*}
for some positive constant $d> 0$.

In the remainder, set $x=1$ for notational simplicity as the value of $x$ plays no important role. Further note that in all estimates the dependence of coefficients of tail bounds on $x$ is continuous, so one can harmlessly take suprema of such bounds for $x$ in compacts at the expense of some weaker constants, keeping the functional form of the tails the same.

Let
$
\mathfrak{B}^k =  \mathfrak{B}^k(x, \ceil{k^{2/3 + \gamma}}, k^{-1/3 - \gamma/4})
$
be the bridge representation induced by the division of time $\{s_r = rx/\ceil{k^{2/3 + \gamma}}: r \in \{1, \dots, \ceil{k^{2/3 + \gamma}}\}\}$ and the graph
$$
G_{2k} = G_{2k}(x, \ceil{k^{2/3 + \gamma}}, k^{-1/3 - \gamma/4}).
$$
Here $\gamma \in (0, 1/3)$ is a parameter that we will optimize over later in the proof. By \cite{dauvergne2021}{Theorem~7.2}, we can couple all the representations $\mathfrak{B}^k$ with the Airy line ensemble $\mathcal{A}$ so that for some universal constant $d$ and all $k\ge 1$
\begin{equation}
\label{eq: scrBkcouple}
\PP\left(\mathfrak{B}^k|_{\{1, \dots, k\} \times [0, x]} \ne \mathcal{A}|_{\{1, \dots, k\} \times [0, x]} \right) \le x\lceil k^{2/3+\gamma}\rceil \mathrm{e}^{-d\gamma k^{\gamma/12}}\,.
\end{equation}
Hence it suffices to analyse the last passage time $L(\mathfrak{B}^k)$ from $(0, k)$ to $(x, 1)$.

\medskip

\textbf{Step 1: Path decomposition.} By representing each of the Brownian bridges used to create $\mathfrak{B}^k = (\mathfrak{B}_{k, 1}, \dots, \mathfrak{B}_{k, k})$ as a Brownian motion minus a random linear term, we can write
$$
\mathfrak{B}_{k, i} = H_{k, i} + R_{k, i} + X_{k, i}
$$
Here the $k$-tuple $H_k = (H_{k, 1}, \dots, H_{k, k})$ consists of $k$ independent rate two Brownian motions on $[0, x]$. The functions $R_{k, i}$ are piecewise linear with pieces defined on the time intervals $[s_{r-1}, s_{r}]$ for $r \in \{0, \dots, \ceil{k^{2/3 + \gamma}}\}$, and the error term $X_{k, i}$ is equal to zero except for on intervals $[s_{r-1}, s_{r}]$ where the vertex $(i, r)$ is in a component of size greater than one in the graph $G_{2k}$. On such intervals, $X_{k, i}$ is the difference between a Brownian bridge from $0$ to $0$ and a Brownian bridge conditioned to avoid $U_{i, r} -1$ other Brownian bridges with certain start and endpoints. Here $U_{i, r}$ is the size of the component of $(i, r)$ in $G_{2k}$ and the two Brownian bridges used in the definition of $X_{k, i}$ are independent.

\medskip

By \cite[Lemma 6.9]{DOV} applied twice, we have that the last passage values, here denoted by $L(\cdot)$,
\begin{equation}
\label{eq: subadd}
L(H_k)+F(R_k)+F(X_k) \le L(\mathfrak{B}^k)\le L(H_k)+L(R_k)+L(X_k)\,,
\end{equation}
where for an environment $f$, $F(f) = -L(-f)$.
By Theorem~2.5 in \cite{dauvergne2021}, the main term
\begin{equation}
\label{eq: Wk-2}
L(H_k)=2\sqrt{2kx}+Y_kk^{-1/6},
\end{equation}
where $\{Y_k\}_{k \in \N}$ is a sequence of random variables satisfying a tail bound
$$
\PP(|Y_k| > m) \le c\mathrm{e}^{-dm^{3/2}/x^{3/4}}
$$
for $c, d >0$  universal constants. To translate Theorem~2.5 in \cite{DOV} to a bound on last passage values, we have used the preservation of last passage values under the melon operation.

\medskip

\noindent \textbf{Step 2: Bounds on the piecewise linear term.}  Observe we have the bound
$$
|L(R_k)|, |F(R_k)| \le M_k,
$$
where $M_k$ is the maximum absolute slope of any of the piecewise linear segments in $R_k$. The slopes in $R_k$ come from increments in the Airy line ensemble minus the increments of the Brownian motions $H_k$ on the grid points. With the notation $S_k(\ell) = \{1, \dots, k\} \times \{1, \dots, \ell\}$, $\ell_k = \lceil k^{2/3+\gamma}\rceil$ and $s_i = ix/\ell_k\,, i\in \{0,\cdots, \ell_k\}$, we have the following upper bound for $M_k$:
\begin{align*}
&\ceil{k^{2/3 + \gamma}}\left[\max_{(i, r) \in S_k(\ceil{k^{2/3 + \gamma}})} |H_{k, i}(s_{r}) - H_{k, i}(s_{r-1})|+ \max_{(i, r) \in S_k(\ceil{k^{2/3 + \gamma}})} |\mathcal{A}_i(s_{r}) - \mathcal{A}_i(s_{r-1})|\right].
\end{align*}
By a standard Gaussian bound on the first term and Proposition \ref{prop: parabolic airy level conc} for the second term, for some $d \in \N$ we have that for all $\delta \in (0, 1/2-1/3-\gamma/2)$
\begin{equation}
\label{eq: Mk}
\PP \left(M_k \ge \varepsilon k^{1/3 + \gamma/2 + \delta} \right) \le c \mathrm{e}^{dx^3}k\lceil k^{2/3+\gamma}\rceil\exp \left(-d \varepsilon^2 k^{2/3+\gamma + 2\delta}/x\right)\,, \qquad k\ge 1\,,
\end{equation}
for some possibly $\delta$-dependent $c, d>0$.
\medskip

\noindent \textbf{Step 3: What's left.}
To bound $L(X_k)$ and $F(X_k)$, we divide $\{1,\dots,k\}$ into $n= \ceil{k^{2/3+\gamma}}$ intervals
$$
I_{k, i} = \left\{ \bigg\lfloor\frac{(i-1)k}n\bigg\rfloor + 1, \dots, \bigg\lfloor\frac{ik}n\bigg\rfloor\right\}, \qquad i \in \{1, \dots, n\}\,.
$$
This, and the division of time into the intervals $[s_{r-1}, s_r]$ for $r \in \{1, \dots, n\}$ breaks the line ensemble $X_k$ into $n^2$ boxes. Each last passage path can meet at most $2n-1$ of these boxes. So we have that
\begin{equation}
\label{eq: Xk}
L(X_k) \le (2n-1)Z_k,
\end{equation}
where $Z_k$ is the maximal last passage value among all values that start and end in the same box (including the boundary). Specifically,
$$
Z_k = \max_{(i, r) \in [1, n]^2} \max \left\{ X_k[(\ell_1, t_1) \rightarrow (\ell_2, t_2)] : \ell_1, \ell_2 \in I_{k, i}, \; t_1, t_2 \in [s_{r-1}, s_r]\right\}.
$$
We have that $Z_k \le N_kD_k$, where
\begin{align*}
N_k &= \max_{(i, r) \in [1, n]^2} \mathrm{card}{\left\{ \ell \in I_{k, i} : X_{k,\ell}|_{[s_{r-1}, s_{r}]} \ne 0\right\}} \quad \mathrm{and} \\
 D_k &= \max \bigg\{ |X_{k, \ell}(t) - X_{k, \ell}(m)| : \ell \in [1, k], t, m \in [s_{r-1}, s_r] \text{ for some } r \in \{1, \dots, n\} \bigg\}
\end{align*}
and $\mathrm{card}$ denotes the cardinality of a (finite) set, i.e. the number of elements it contains.

That is, $N_k$ is the maximum number of non-zero line segments in any box, and $D_k$ is the maximum increment over any line segment in a box. Since $X_{k, \ell} = \mathfrak{B}_{k, \ell} - H_{k, \ell} - R_{k, \ell}$, we can bound $D_k$ in terms of the deviations of the other paths. To bound the deviation of $R_{k, \ell}$, we use the bound on $M_k$ above. The deviation of $H_{k, \ell}$ can be bounded with standard bounds on Gaussian random variables. On the event where $\mathfrak{B}^k|_{\{1, \dots, k\} \times [0, x]} = \mathcal{A}|_{\{1, \dots, k\} \times [0, x]}$, we can bound the deviation of $\mathfrak{B}_{k, \ell}$ using Proposition \ref{prop: global modulus airy}. Thus, we have for all $\delta >0$, $k\ge 1$
\begin{equation}
\label{eq: Dka}
\PP\left(D_k > \varepsilon k^{-1/3 - \gamma/2+\delta }, \; \mathfrak{B}^k = \mathcal{A}|_{\{1, \dots, k\} \times [0, 1]} \right) \le c k\ceil{k^{2/3+\gamma}}\exp \left(-d\varepsilon^2  k^{2/3+\gamma + 2\delta}/x\right)\,,
\end{equation}
for some $d>0$. Combining equations \eqref{eq: Dka} and \eqref{eq: scrBkcouple} gives for all $\delta >0$
\begin{equation}
\label{eq: Dk}
\PP\left(D_k > \varepsilon k^{-1/3 - \gamma/2 +\delta}\right) \le c k\ceil{k^{2/3+\gamma}} \left(\exp \left(-d \varepsilon^2 k^{2/3+\gamma + 2\delta}/x\right) + \mathrm{e}^{-d\gamma k^{\gamma/12}}\right)\,, k\ge 1\,.
\end{equation}
The quantity $N_k$ is equal to the maximum number of edges in the graph $G_k$ in a region of the form $I_{k, i} \times \{r\}$ for some $r \in \{1, \dots, n\}$. This can be bounded using \cite[Proposition 6.7]{DOV} and a union bound, which yields for all $\delta > 0, \varepsilon < k ^{1/6-\gamma/2 - \delta}$ and $k\ge 1$
$$
\PP \left(N_k > \varepsilon k^{1/3 - \gamma} k^{- 3 \gamma/4} k^{\delta} \right) \le c x\ceil{k^{2/3+\gamma}} \exp(-d \varepsilon k^\delta)\,,
$$
for some constant $d>0$. Combining this with the bound in \eqref{eq: Xk} and \eqref{eq: Dk} implies that for all $\delta > 0, \varepsilon < k ^{1/6-\gamma/2 - \delta}$
\begin{align}
\label{eq: Mk-2}
&\PP\left(L(X_k) > \varepsilon k^{2/3 - 5\gamma/4} k^{\delta} \right) \le c k \ceil{k^{2/3+\gamma}} \\
&\cdot \left(\exp \left(-d \varepsilon k^{2/3+\gamma + \delta/2}/x\right) + \exp(-d\varepsilon^{1/2}  k^{\delta/2}) + \mathrm{e}^{-d\gamma k^{\gamma/12}}\right)\,.\nonumber
\end{align}

We can symmetrically bound $F(X_k)$.

\medskip

\noindent \textbf{Step 4: Combining everything together.}
By combining the inequalities \eqref{eq: subadd}, \eqref{eq: Wk-2}, \eqref{eq: Mk} and \eqref{eq: Mk-2}, we get that for all $\delta >0 $ and $\varepsilon < k ^{1/6-\gamma/2 - \delta}$
\begin{align*}
&\PP\left( |L(\mathfrak{B}^k) - 2\sqrt{2kx}| >  \varepsilon k^{2/3 - 5\gamma/4 + \delta} + \varepsilon k^{1/3 + \gamma /2 + \delta} \right)\\
& \le c k\ceil{k^{2/3+\gamma}} \mathrm{e}^{dx^3}\left(\exp \left(-d \varepsilon k^{2/3+\gamma + \delta/2}/(x\land x^{3/4})\right)+\exp(-d\varepsilon^{1/2}  k^{\delta/2}) + \mathrm{e}^{-d\gamma k^{\gamma/12}}\right)
\end{align*}
for positive constants $c,d$. Taking $\gamma = 4/21, \delta = 1/14-1/126$ completes the proof of the first regime of `small' $\varepsilon$. 
\end{proof}

\section{Transversal fluctuations of semi-infinite geodesics in the  Airy line ensemble}\label{sec: geod geom}

In this section, we investigate geodesic geometry in the Airy line ensemble. The main result is Theorem~\ref{thm: intercept tail bound} where we obtain \emph{exponentially stretched} tail bounds on intercepts of semi-infinite geodesics. We start with providing the concentration result for semi-infinite geodesic intercepts in the Airy line ensemble.

\subsection{Tail bounds on geodesic intercepts} Recall Theorem~\ref{thm: melon scaling Airy} which gives the Airy line ensemble as a scaling limit of rescaled Brownian melons. With this in mind, we will start in the prelimiting environment and obtain some more refined structural properties of the prelimiting jump times of geodesics on such melons. By the weak convergence already established, they easily translate to the limiting objects.
    
    Now, using the notation established in \cite{DOV}, for $n\in \N$, let
\begin{equation*}\underline{x}=2xn^{-1/3}\,,\qquad \mbox{ and } \qquad \hat{y}=1+2yn^{-1/3}\,.\end{equation*}
Furthermore, let $\gamma_n = \pi\{\underline{x}\to \hat{y}\}_n$ be the rightmost last passage path between $\underline{x}$ and $\hat{y}$ in the melon $W B^n$ with last passage value (recall Definition \eqref{Definition: last passage})
\[
\{\underline{x}\to \hat{y}\}_n \equiv WB^n[(\underline{x}, n)\to (\hat{y}, 1)]\]
(we will henceforth drop the subscript from the curly brackets as it is clear from the context).
For $n\in \N$ and $k\in \{1,2,\ldots, n\}$, let $\hat{Z}^n_k(x,y) = 1+n^{-\frac{1}{3}}Z_k^n(x,y)$ be the supremum of $w$ so that $(w,k)$ lies along $\gamma_n$. Then, by \cite{DOV}, Lemma \ref{lemma: jump times parabola conc}, it follows that for each $k\in \N$, the sequence $(Z^n_k(x,y))_n$ is tight. Let $Z_k(x,y)$ denote the sub-sequential limits of $(Z^n_k(x,y))_n$ for any $x,y$.

\begin{lemma}\label{lemma: jump times parabola conc}
Let $K$ be a compact countable subset of $(0, \infty) \times \R$. Then for any $\varepsilon > 0$
\begin{align*}
\label{eq: xyKxyK}
& \PP\left(
    \sup_{(x,y)\in K} \left| Z_k(x, y) + \sqrt{\frac{k}{2x}} \right| \ge \varepsilon \sqrt{k}
\right) \\
& \le C (\varepsilon^2 \lor 1/\varepsilon) \Biggl(
    \sup_{x\in K} \PP \left(
        \frac{\left|\mathcal{A}[ (0, k) \rightarrow (x,1) ] - 2\sqrt{2kx} \right|}{k^{1/2}} > \varepsilon
    \right)\\
& + \exp\left(
        - d \frac{
            \varepsilon^{3/4} k^{3/4}
        }{
            \bigl(\sup_{(x,y)\in K} (|x| + |y| + 1)\bigr)^2
        }
    \right)
\Biggr) \,, \quad k \ge 1
\end{align*}
for some universal $d>0$ and $C>0$.
\end{lemma}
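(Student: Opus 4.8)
The plan is to follow the strategy of Dauvergne--Ortmann--Vir\'ag (\cite[Section~6]{DOV}) for building the Airy sheet out of the parabolic Airy line ensemble --- the very argument in which the tightness of $\{Z^n_k(x,y)\}_n$ used above is established --- and to upgrade their qualitative concentration of jump locations to the quantitative tail bound claimed here by feeding in Theorem~\ref{thm: Airy LPP deviation}. First I would reduce to a finite subset $F\subseteq K$: since $K$ is countable, writing $K=\bigcup_N F_N$ with $F_N$ finite and increasing and using monotone convergence reduces the claim to the same statement with $K$ replaced by a finite set and $C_F\le C_K$, after which the dependence of the constant on $F$ is harmless. For finite $F$ one passes from the prelimiting melon to the limiting object using the weak convergence of Theorem~\ref{thm: melon scaling Airy} together with continuity of the relevant geodesic functionals on the rescaled melon restricted to a compact region; the only subtlety is the rightmost-geodesic convention, handled as in \cite{DOV} by ruling out non-uniqueness of maximisers almost surely.

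The heart of the argument is a variational characterisation of $Z_k(x,y)$. By the metric composition law (Lemma~\ref{Lemma: Metric Composition}) applied at level $k$ and the rightmost-path convention, $Z_k(x,y)$ is the largest maximiser of a functional of the form $w\mapsto \phi_{x,k}(w)+\mathcal{A}[(w,k)\to(y,1)]$, where $\phi_{x,k}$ is the limit of the rescaled bottom passage $WB^n[(2xn^{-1/3},n)\to(1+2wn^{-1/3},k)]$ across the deep levels of the melon; as computed in \cite{DOV} this limiting profile is, to leading order, a deterministic concave parabola with apex at $w_0:=-\sqrt{k/(2x)}$ and curvature bounded above and below uniformly for $(x,y)\in F$. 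The key step is then a \emph{parabolic loss} estimate: if $|Z_k(x,y)-w_0|\ge \epsilon\sqrt k$, maximality of $Z_k(x,y)$ forces the random profile $w\mapsto\mathcal{A}[(w,k)\to(y,1)]$ to beat its value at $w_0$ at the displaced point by an amount growing like the squared displacement, which in turn can only happen if the Airy last passage value deviates from its deterministic leading order $2\sqrt{2k(y-w)}$ by at least the threshold appearing in the hypothesis.

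I would then split according to the size of the displacement. For \emph{moderate} displacements --- $Z_k(x,y)$ within a macroscopic window of $w_0$ of width $O_F(\sqrt k)$ --- I would cover the window by $O_F(1/\epsilon)$ sub-intervals of length $\epsilon\sqrt k$, use the global modulus of continuity of the Airy lines (Proposition~\ref{prop: global modulus airy}) to replace the continuum supremum by the values at the net points, and then use the reflection symmetry $\mathcal{A}_i(\cdot)\stackrel{d}{=}\mathcal{A}_i(-\cdot)$ and the time-shift covariance of the parabolic Airy line ensemble to rewrite each single-point deviation event as a deviation event for $\mathcal{A}[(0,k)\to(x',1)]$ with $x'$ ranging over a compact set fixed by $F$; a union bound over a net there (again via Proposition~\ref{prop: global modulus airy}) bounds it by $C_F\sup_x\PP(|\mathcal{A}[(0,k)\to(x,1)]-2\sqrt{2kx}|>\epsilon k^{1/2})$, giving the first right-hand term with the $1/\epsilon$ prefactor. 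For \emph{large} displacements the deterministic parabolic penalty is macroscopic, while the competing Airy fluctuation over such a range is controlled crudely by the second regime of Theorem~\ref{thm: Airy LPP deviation} (equivalently by Proposition~\ref{prop: global modulus airy} applied to the finitely many lines $\mathcal{A}_1,\dots,\mathcal{A}_k$); balancing the macroscopic gain required against the Gaussian cost of the fluctuations over a dyadic decomposition of displacement scales, and using that all coefficients depend on $(x,y)$ only through $|x|+|y|+1$, yields the stretched-exponential term $\exp(-d\epsilon^{3/4}k^{3/4}/(\sup_F(|x|+|y|+1))^2)$, with the residual $\epsilon^2$ in the prefactor an artefact of that dyadic bookkeeping. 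A final union bound over the finitely many $(x,y)\in F$ and the passage $F\uparrow K$ finish the proof.

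I expect the main obstacle to be making the parabolic-loss step fully quantitative with the exponents as stated: one must track the precise curvature of the limiting bottom-passage parabola and its joint dependence on $x$ and $k$, and then convert ``the Airy profile beats its apex value by a definite amount'' into precisely the single-point tail $\PP(|\mathcal{A}[(0,k)\to(x,1)]-2\sqrt{2kx}|>\epsilon k^{1/2})$ via the reflection and shift reductions, uniformly in $k$. The large-displacement regime is softer but still requires organising the dyadic sum over displacement scales so that the final exponent comes out as $\epsilon^{3/4}k^{3/4}$ rather than something weaker, and propagating the dependence on $|x|+|y|+1$ carefully through the modulus-of-continuity inputs.
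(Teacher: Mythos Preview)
Your high-level strategy---variational characterization of $Z_k$ plus a curvature/penalty argument---matches the paper's, but the execution differs in two places, and one of your steps does not work as written.

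On discretization: you propose to cover the $O(\sqrt k)$-window by a net and invoke the modulus of continuity of the Airy lines. The paper instead works in the prelimiting melon and writes the triangle inequality $F^n_k(x,z)+G^n_k(z,y)\le H_n(x,y)$ (equality at $z=Z^n_k$), then uses that $F^n_k(x,\cdot)$ is increasing and $G^n_k(\cdot,y)$ is decreasing (\cite[Lemma~3.3]{DOV}). This monotonicity alone reduces the continuum to the finite grid $z_{k,a}=-a\sqrt{k/x}$ with $a\in(12\epsilon^2)^{-1}\Z\cap[1/4,2]$ plus two boundary points, with no modulus-of-continuity input; this is what produces the $(\epsilon^2\vee 1/\epsilon)$ prefactor.

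The more serious issue is your reduction ``rewrite each single-point deviation event as a deviation event for $\mathcal{A}[(0,k)\to(x',1)]$ with $x'$ ranging over a compact set fixed by $F$''. The shift you need is by $w\sim-\sqrt{k/(2x)}$, so after the stationary-Airy shift covariance the endpoint becomes $y-w\sim\sqrt{k}$, not in a fixed compact; Theorem~\ref{thm: Airy LPP deviation} is stated for fixed $x$ and does not cover that regime. The paper avoids this with a different trick: for each grid point $z_{k,a}$ it bounds $G^n_k(z_{k,a},y)\le H_n(x',y)-F^n_k(x',z_{k,a})$ with the specific choice $x'=x/(2a^2)$, engineered so that $z_{k,a}=-\sqrt{k/(2x')}$. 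This keeps every spatial argument at $O_K(1)$ scale, so both $F^n_k$-terms are controlled by the fixed-$x$ deviation of Theorem~\ref{thm: Airy LPP deviation} and $H_n(x',y)$ by the single-point Brownian LPP tail of \cite[Theorem~2.5]{DOV}, which produces the $\exp(-d\,\epsilon^{3/4}k^{3/4}/\cdots)$ term. Your separate ``large-displacement'' dyadic analysis is subsumed by the two boundary cases $z=x-n^{1/3}$ and $z=y$ in the paper's three-case split. (A smaller point: $\phi_{x,k}\equiv F^n_k(x,\cdot)$ is to leading order \emph{linear} in $w$, not parabolic; the concavity with apex at $w_0$ is a feature of the sum $F+G$, and emerges precisely via the $x'$-trick giving $G(z_{k,a},\cdot)\approx -\sqrt{kx}/a$.)
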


\begin{proof}
First, fix $x,y\in K$. Now, rescale by $n^{1/6}$ and centre so that the triangle inequality
\begin{equation*}
\{\bar x \rightarrow (\hat z,k)\} + \{(\hat z,k)\rightarrow \hat y\} \le   \{\bar x\rightarrow \hat y \}
\end{equation*}
reads 
\begin{equation}\label{eq: triangle2}
F^n_k(x, z)+G^n_k(z, y)\le H_n(x, y) 
\end{equation}
with
\begin{eqnarray*}
H_n(x, y)&=&n^{1/6}\{\bar x\rightarrow \hat y \}-2n^{2/3}-2(y-x)n^{1/3}, \\
F^n_k(x, z) &=& n^{1/6}(\{\bar x \rightarrow (\hat z,k)\}-W^n_k(\hat z))+2xn^{1/3},
\\ G^n_k(z, y)&=&n^{1/6}(W^n_k(\hat z) + \{(\hat z,k )\rightarrow \hat y\})-2yn^{1/3}-2n^{2/3}.
\end{eqnarray*}
The basic proof strategy for bounding $Z^n_k(x, y)$ is as follows. On the one hand,
\begin{equation*}
F^n_k(x, Z^n_k(x, y)) + G^n_k(Z^n_k(x, y),y)=H_n(x, y)
\end{equation*}

We will show that for every $\varepsilon \in (0, 1)$ we have
\begin{equation}
\label{eq: zzz}
\sup_{z : \;|z+\sqrt{k/(2x)}|>\varepsilon \sqrt{k}} F^n_k(x, z)+G^n_k(z, y) \le -\varepsilon^2\sqrt{kx}/2 + \mathfrak{o}(\sqrt{k})\,,
\end{equation}
where the error term $\mathfrak{o}(\sqrt{k})$ is asymptotically `small' with respect to the scale of $\sqrt{k}$. By \cite[Lemma 3.3]{DOV},
$F^n_k(x, \cdot)$ is monotonically increasing and $G^n_k(\cdot, y)$ is monotonically decreasing. We can use this monotonicity to bound the left hand side of \eqref{eq: zzz} by a supremum over a finite set. 
 Let
$
A = (12\varepsilon^2)^{-1}\Z \cap [1/4, 2],
$
and for $z \in [-n^{1/3} + x, y]$, define 
\begin{equation*}
\label{eq: floorznk}
\begin{split}
    \floor{z}_{n, k} &= \max \{w \in -\sqrt{k/x}A \cup \{x - n^{1/3}\} : w < z\} \qquad \mathrm{and}\\
\ceil{z}_{n, k} &= \min \{w \in -\sqrt{k/x}A \cup \{y\} : w > z\}.
\end{split}
\end{equation*}
We also set $\lfloor x - n^{1/3}\rfloor_{n, k} = x - n^{1/3}$ and $\ceil{y}_{n, k} = y$. The monotonicity of $F^n_k(x, \cdot)$ and  $G^n_k(\cdot, y)$ implies that the left hand side of \eqref{eq: zzz} is bounded above by
\begin{equation}
\label{eq: with-part}
    \sup_{z : \;|z+\sqrt{k/(2x)}|>\varepsilon \sqrt{k}} F^n_k(x, \ceil{z}_{n, k})+G^n_k(\floor{z}_{n, k}, y)\,.
\end{equation}
Notice that the number of terms is uniformly bounded in $n$ and $k$, so it is enough to control the terms individually. There are three cases to consider, namely, 

\begin{align}\label{eq: with-part-2}
    \begin{cases}
        &F^n_k(x, z_{k,a})+G^n_k(x-n^{1/3}, y) \le -\varepsilon^2\sqrt{kx}/2 + \mathfrak{o}(\sqrt{k})\\
        &F^n_k(x, z_{k, a})+G^n_k(z_{k, a}, y) \le -\varepsilon^2\sqrt{kx}/2 + \mathfrak{o}(\sqrt{k})\\
        &F^n_k(x, y)+G^n_k(z_{k, a}, y) \le -\varepsilon^2\sqrt{kx}/2 + \mathfrak{o}(\sqrt{k})\,,
    \end{cases}
\end{align}
for every fixed $a\in A$, with $z_{k, a} =  - a\sqrt{k/x}$.  
\medskip

To prove \eqref{eq: with-part-2}, we establish pointwise bounds on $F^n_k$ and $G^n_k$. \cite[Proposition 6.1]{DOV} gives that for a fixed $a > 0$ we have
\begin{equation*}
    \label{eq: Fnk-bound-2}
    F^n_k(x, z_{k, a}) \le  2\sqrt{kx}(\sqrt{2} - a) +  R^{1,a}_{n,k}\,.
\end{equation*}
\cite[Proposition 6.1]{DOV} also yields the bound
\begin{equation}
    \label{eq: Fnk-bound}
    F^n_k(x, y) = 2\sqrt{2kx}  + R^{2}_{n,k}\,.
\end{equation}
Observe that Theorem~\ref{thm: Airy LPP deviation}, \cite[Proposition 6.1]{DOV} and weak convergence give the following uniform bounds with respect to $y$ for any fixed $\varepsilon > 0$ 
\begin{align*}
& \displaystyle\limsup_{n\to\infty}\left(\PP(R^{1,a}_{n,k}>\varepsilon \sqrt{k})+ \PP(R^{2}_{n,k}>\varepsilon \sqrt{k})\right)\le 2\PP \left(\frac{|\mathcal{A}[ (0, k) \rightarrow (x,1) ]-{2\sqrt{2kx}}|}{k^{1/2}} > \varepsilon\right)
\end{align*}
The triangle inequality \eqref{eq: triangle2} with   $x'=  x/(2a^2)$ gives
\begin{equation*}
    G^n_k(z_{k, a}, y) \le H_n(x', y) - F^n_k(x', z_{k, a}).
\end{equation*}
Now, $H_n(x', y)$ is equal to a rescaled and shifted Brownian last passage value by Proposition \cite[Proposition 4.1]{DOV}. Therefore, by Theorem~\cite[Theorem~2.5]{DOV} which gives bounds on single Brownian last passage values, it is tight in $n$ and hence $H_n(x', y)  = \mathfrak{o}(\sqrt{k})$. In particular, making this more quantitative gives for any $\varepsilon > 0$
\begin{equation*}
\displaystyle\limsup_{n\to \infty}\PP\left(\frac{|H_n(x,y)|}{\sqrt{k}}\ge 
\varepsilon\right)\lesssim \exp\left(-d\frac{\varepsilon^{3/8}k^{3/4}}{(|x-y|\lor 1)^{3/2}}\right)\,,
\end{equation*}
for some universal $d>0$. 

Moreover, \cite[Proposition 6.1]{DOV} gives that
$$
F^n_k(x', z_{k, a}) = 2 \sqrt{2 k x'} + 2 z_{k, a}  x' + \mathfrak{o}(\sqrt{k}) = \frac{\sqrt{kx}}{a}  + \mathfrak{o}(\sqrt{k})
$$
and so
\begin{equation*}
    G^n_k(z_{k, a}, y) \le -\frac{\sqrt{kx}}a  + R^{3,a}_{n,k}.
\end{equation*}
where the following uniform bounds wrt $y$ for any fixed $\varepsilon > 0$ are satisfied
\begin{equation*}
\displaystyle\limsup_{n\to\infty}\PP(R^{3,a}_{n,k}>\varepsilon \sqrt{k})\le \PP \left(\frac{|\mathcal{A}[ (0, k) \rightarrow (x,1) ]-{2\sqrt{2kx}}|}{k^{1/2}} > \varepsilon\right)\,.
\end{equation*}
We also have the bound
\begin{equation*}
    G^n_k(x - n^{1/3}, y) \le H_n(0, y) - F^n_k(0, x - n^{1/3}) = H_n(0, y) = \mathfrak{o}(\sqrt{k}).
\end{equation*}
The first equality here follows from the fact that  $F^n_k(0,\cdot)=0$, and the second equality again follows from \cite[Theorem~2.5]{DOV}.

Having now established the bound in \eqref{eq: with-part-2}, one obtains by a union bound and the convergence in distribution of $Z^n_k(x,y)\stackrel{d}{\rightarrow}Z_k(x,y)\,,n\to \infty$,

\begin{align*}
\PP\left( \left|Z_k(x, y) +\sqrt{\frac{k}{2x}}\right| \ge \varepsilon \sqrt{k}\right) &\le \displaystyle\liminf_{n\to \infty}\PP\bigg(\bigg|Z^n_k(x,y)+\sqrt{\frac{k}{2x}}\bigg|\ge  \varepsilon \sqrt{k}\bigg)\\
&\le \displaystyle\liminf_{n\to\infty}\PP\left( H^n(x,y)\le -\varepsilon^2\sqrt{kx}/2 + \displaystyle\displaystyle\max_{a\in A}\displaystyle\sum_{i=1}^3 |R^{i,a}_{n,k}|\right)\\
&  \le \liminf_{n\to\infty}\displaystyle\sum_{a\in A}\PP\left( |H^n(x, y)| + |R^{1,a}_{n,k}|+ |R^{2}_{n,k}|+ |R^{3,a}_{n,k}| \ge \varepsilon^2\sqrt{xk}/2\right)\\
& \le C (\varepsilon^2 \lor 1)\left( \PP \left(\frac{|\mathcal{A}[ (0, k) \rightarrow (x,1) ]-{2\sqrt{2kx}}|}{k^{1/2}} > \varepsilon\right)\right.\\
& \left.+  \exp\left(-d\frac{(\varepsilon^{3/4}) k^{3/4}}{(\sup_{(x,y)\in K} (|x| + |y| + 1)^2}\right)\right)\,,
\end{align*}
for some universal $C>0$. Now, to obtain the uniform bound, observe that by the monotonicity of $Z_k(\cdot, \cdot)$ in each of its of its arguments and the continuity of $1/\sqrt{2\cdot}$, it suffices to fix any $\varepsilon$ cover of $K$ with at most $\lceil 1/\varepsilon \rceil$ elements and use the pointwise estimates for fixed $x,y\in K$ at the expense of the $\lceil 1/\varepsilon \rceil$ term that comes from a union bound.
\end{proof}

We now introduce the coupling between the Airy sheet and the Airy line ensemble last passage values quoted from \cite[Definition 1.2]{DOV}, that will be used throughout the paper.

\begin{definition}(Airy sheet coupling)\label{def: Airy sheet}
The Airy sheet $\mathcal{S}(\cdot, \cdot) = \mathcal{L}(\cdot, 0; \cdot, 1)$ can be coupled with the (parabolic) Airy line ensemble $\mathcal{A}$ so that $\mathcal{S}(0,\cdot)=\mathcal{A}_1(\cdot)$ and almost surely for all $(x,y,z)\in K \subseteq (0, \infty)\times \R^2$, there exists a random integer $K_{x,y,z}$ such that for all $k\ge K_{x,y,z}$ \begin{equation*}\label{eq: defS}
\mathcal{A}[x_k\to (z,1)]-\mathcal{A}[x_k\to (y,1)]=\mathcal{S}(x,z)-\mathcal{S}(x,y)\,,
\end{equation*}
where $x_k=(-\sqrt{k/2x},k)$.
\end{definition}
We shall use this coupling of the Airy sheet throughout the paper. For $x\leq y\in \R$ and $\ell\in \N$, we shall denote the rightmost geodesic between $(x,\ell)$ and $(y,1)$ in the Airy line ensemble $\mathcal{A}$ by $\pi[(x,\ell)\to y]$. Next we define the infinite geodesics in the Airy line ensemble.

\begin{definition}\label{def: semi-inf geo} For any $x\in\R^+$ and $y\in \R$ with $x_k=(-\sqrt{k/2x},k)$, we define the geodesic $\pi[x\to y]$ as the almost sure pointwise limit of $\pi[x_k\to y]$ as $k\to \infty$, whenever the limit exists. We define the length of the geodesic $\pi[x\to y]$ as $\mathcal{S}(x,y)$.
\end{definition}
\begin{remark}
    The fact that these limits exist almost surely for all $(x,y)$ in a countable dense set of $\R^+\times \R$ is the content of \cite[Lemma 3.4]{sarkar2021brownian}.
\end{remark}

In the absolute continuity paper of \cite{sarkar2021brownian}, the authors obtain, using a coupling with the Airy sheet, the following semi-discrete variational characterisation of the Airy sheet in terms of semi-infinite geodesic intercepts (see Figure \ref{fig: Airy geodesic} for an illustration). For a proof of the following lemma, see \cite[Lemma 3.10]{sarkar2021brownian}.

\begin{figure}[t]
\includegraphics[width= 280pt]{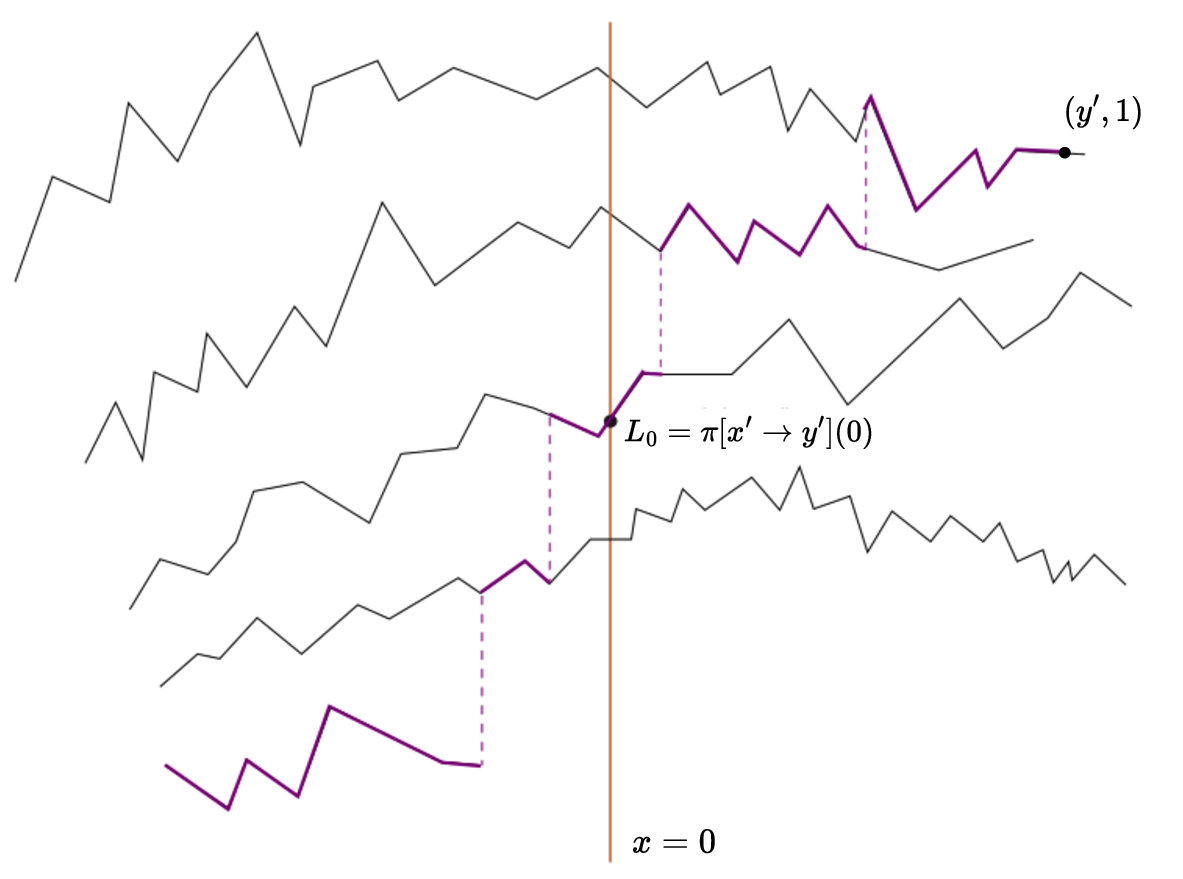}
\centering
\caption{Above is displayed the point $(0,L_0)$ at which the last passage path $\pi[x'\to y']$ on the Airy line ensemble $\mathcal{A} = (\mathcal{A}_1, \mathcal{A}_2,\cdots)$ (\color{purple} purple\color{black}) meets with the axis $\{x=0\}$, where $y'>1$. Here $L_0 = 3$ and the first four lines of $\mathcal{A}$ are shown. The last passage path $\pi[x'\to y']$ is defined in Definition 3.3 in \cite{sarkar2021brownian} and Definition \ref{def: semi-inf geo}.}
\label{fig: Airy geodesic}
\end{figure}

\begin{lemma}\label{lemma: Airy sheet variational formula}
Let $x_0>1$ and $y_0>1$ and $K\subseteq \R$ be a countable dense set. Let
\begin{equation*}L_0=\pi[x_0'\to y_0'](0)\,,\end{equation*}
for some $x_0',y_0'\in K$ with $x_0'\geq x_0$ and $y_0'\geq y_0$. Then almost surely for all $x\in [1,x_0]\cap K$ and all $y\in[1,y_0]$,
\begin{equation*}\mathcal{S}(x,y)=\max_{1\le \ell\le L_0}(\mathcal{A}[x\to(0,\ell)]+\mathcal{A}[(0,\ell)\to (y,1)])\,,\end{equation*}
where $\mathcal{A}$ is the Airy line ensemble that is coupled to the Airy sheet $\mathcal {S}$ as in Definition \ref{def: Airy sheet} and the semi-infinite last passage values
    \begin{equation}\label{eq: Airy limit}
    \mathcal{A}[x\to(0,\ell)]\equiv 
    \begin{cases}
        \mathcal{S}(x,0) & \ell = 1\\
        \displaystyle\lim_{k\to\infty}\mathcal{A}[x_k\to (0,\ell)]-\mathcal{A}[x_k\to (0,1)]+\mathcal{S}(x,0) & \ell >1\,,
    \end{cases}
    \end{equation}
   with $x_k = (-\sqrt{k/2x},k), k\in\N$. 
\end{lemma}

\begin{remark}
    The fact that the semi-infinite last passage values $\mathcal{A}[x \to (0, \ell)]$ exist (in fact the limits almost surely eventually stabilise) for any fixed $x> 0, \ell \ge 1$ almost surely and are well defined is the crux of Theorem~3.7 in \cite{sarkar2021brownian} and uses geometric properties of geodesics in the Airy line ensemble.

    For $x> 0$ and $\ell = 1$, $ \mathcal{A}[x\to(0,\ell)] = \mathcal{S}(x,0)$, which has H\"older $1/2-$ continuous sample paths and is the parabolic $\text{Airy}_{2}$ process. For $\ell >1$, the pathwise properties of $ \mathcal{A}[x\to(0,\ell)]$ for $x>0$ become a lot less clear. However, the uniform modulus of continuity estimates for the Airy line ensemble \ref{prop: global modulus airy} and geodesic coalescence in the Airy line ensemble allow one to obtain that the process $x\mapsto \mathcal{A}[x\to(0,\ell)]$ is \emph{continuous in probability}. 

    For any $x> 0, \ell\in\N$, the $\mathcal{A}[x\to(0,\ell)]$ attains a finite value and is
    \[
    \mathcal{F}_{-}\equiv \sigma(\{\mathcal{A}_i(x):x\leq0,i=1,2,\cdots\})
    \]
    measurable. This is the content of \cite[Lemma 3.8]{sarkar2021brownian} and \cite[Lemma 3.9]{sarkar2021brownian} respectively. The latter property follows from the Definition \ref{eq: Airy limit} and ergodic properties of the Airy line ensemble.

    One can observe by inspecting the proof of Lemma 3.10 in \cite{sarkar2021brownian} that for any $m\in \N $, on the event $\{L_0\leq m\}$ one has the almost sure equality 
    \begin{equation*}
    \mathcal{S}(x,y) = \max_{1\le \ell\le m}(\mathcal{A}[x\to(0,\ell)]+\mathcal{A}[(0,\ell)\to (y,1)])\,.
    \end{equation*}
\end{remark}

Thus, good control over $L_0$ should translate to control over the Airy sheet and hence the KPZ fixed point, owing to its variational characterisation \eqref{eq: KPZ fixed point}. The structure of jump times of (semi-infinite) geodesics on the Airy line ensemble and Lemma \ref{lemma: jump times parabola conc} give the following Theorem~which is the main result of this subsection.

\begin{theorem}\label{thm: intercept tail bound}
For any $x_0>1, y_0>1$, there exists  $d>0$ such that the semi-infinite geodesic intercept $L_0 = \pi[x_0\to y_0](0)$ as given in the statement of Lemma \ref{lemma: Airy sheet variational formula} satisfies the tail bounds
    \begin{equation*}
        \displaystyle\sup_{k\in \N} \exp(dk^{1/126})\cdot \PP(L_0\geq k)<\infty\,.
    \end{equation*}
for some universal $x_0, y_0$-dependent constant $d>0$. Keeping track of the $x_0, y_0$-dependence (at the expense of the exponent) gives that there exist universal $\theta , \eta, c,d > 0$ such that for all $k\ge 1$
 \begin{equation*}
        \PP(L_0\geq k)\le \mathrm{e}^{c(x_0+y_0)^\theta}\mathrm{e}^{-dk^{\eta}}\,.
    \end{equation*}
    \end{theorem}

\begin{proof}
    First observe that for any $k\in \N$, by the Skorokhod coupling in \cite[p.43]{DOV}, the almost sure pointwise limits  $Z_k(x_0,y_0)$ of the jump times $Z^n_k(x_0,y_0)$ correspond to the jump times of the semi-infinite geodesic $\pi[x_0 \to y_0]$. Thus, by Lemma \ref{lemma: jump times parabola conc} for $k\ge 1$ and $\varepsilon = 1/\sqrt{2x_0}$,
    \begin{align*}
        \PP(L_0\ge k) = \PP(Z_k(x_0,y_0)\ge 0)&\le \displaystyle\liminf_{n\to \infty}\PP(Z^n_k(x_0,y_0)\ge 0)\\
        & \le \PP\left( \left|Z_k(x_0,y_0) +\sqrt{\frac{k}{2x_0}}\right| \ge \varepsilon \sqrt{k}\right)\\
        &\le c\sqrt{x_0}\lor 1/x_0 \left( \PP \left(\frac{|\mathcal{A}[ (0, k) \rightarrow (x,1) ]-{2\sqrt{2kx}}|}{k^{1/2}} > \varepsilon\right)\right.\\
        &\left.+  \exp\left(-d\frac{\varepsilon^{3/4} k^{3/4}}{(|x_0| + |y_0| + 1)^2}\right)\right)\\
        & \stackrel{\mathrm{Thrm} \ref{thm: Airy LPP deviation}}{\le} c\exp(d (x_0)^3)k^2\left(\exp(-d\varepsilon^{1/2}  k^{1/28})\right.\\
        &\left. + \exp\left(-d\varepsilon k^{1/126}/(x_0)^{3/8}\right)+ \exp\left(-d\frac{\varepsilon^{3/4} k^{3/4}}{(|x_0| + |y_0| + 1)^2}\right)\right)\,,
    \end{align*}
    for some $c,d>0$ and all $1/\sqrt{2x_0}< k ^{1/126}$, whence the result follows.
\end{proof}

\section{Regularity of finite-depth truncations of the KPZ fixed point}\label{sec: finite depth truncation}

In this section, we obtain a quantitative comparison of the spatial increments of `finite depth truncations' of the KPZ fixed point in terms of the Wiener measure and last passage values of semi-infinite geodesics in the Airy line ensemble. We crucially use the variational formula for the KPZ fixed point \eqref{eq: KPZ fixed point} and the coupling in Definition \ref{def: Airy sheet}. This is achieved through the Brownian Gibbs property of the Airy line ensemble, which further reduces the problem to estimating the Radon-Nikodym derivatives of inhomogeneous Brownian LPP with non-decreasing initial data. This is done in Theorem~\ref{thm: main companion}. Technical input from \cite{dauvergne2024wienerdensitiesairyline} allows us to estimate inverse acceptance probabilities that appear in the estimates. Combining the above leads to Theorem~\ref{thm: finite depth KPZ estimates a priori}.

First, we need an appropriate definition of `support' compatible with the `max-plus' nature of the directed landscape.
\begin{definition}(max-plus support)\label{def: max support}
    Let $f:\R\to \R\cup \{-\infty\}$ be a Borel function. We define the \textbf{max-plus} support of $f$ to be the set
    \begin{equation*}
    \mathrm{supp}_{-\infty}(f) : = \{x\in \R: f(x)\neq -\infty\}\,.
    \end{equation*}
\end{definition}

By $1:2:3$ scaling, we lose no generality in considering the KPZ fixed point at unit time, $\mathfrak{h}_1(\cdot)$ with initial data $h_0:\R\to \R\cup \{-\infty\}$, which can be written more explicitly as
    \begin{equation*}
    \mathfrak{h}(y) = \sup_{x\in \mathrm{supp}_{-\infty}(h_0)}(h_0(x)+\mathcal{S}(x,y))\,,
    \end{equation*}
    where $\mathcal{S}(\cdot, \cdot)$ denotes the Airy sheet, see Definition \ref{def: Airy sheet}. If the `max-plus' support of initial data is compact, by translation symmetries of the Airy sheet \cite[Lemma 9.1]{DOV} we do not lose generality if we translate the support of the initial data to lie in $[1,x_0]$, for some $x_0>1$. We can additionally set the interval of comparison to $[1,y_0]$, $y_0>1$ (by possibly enlarging it) as in Lemma \ref{lemma: Airy sheet variational formula}, at no loss of generality. Moreover, the metric composition law for the Airy sheet \cite[Proposition 9.2]{DOV} will allow us to effectively only need to consider continuous initial data for which we can replace $\supp{h_0}$ with $\Q\cap \supp{h_0}$. Thus, as a first step when initial data is compactly supported, we will henceforth make the following assumptions on the initial data:
    \begin{itemize}\label{assumption initial data}
        \item[1.] $\mathrm{supp}_{-\infty}(h_0)\subseteq \Q$ bounded and
        \item[2.] $\overline{\mathrm{supp}_{-\infty}(h_0)}\subseteq(0,\infty)$. 
    \end{itemize}
    
    Now, from Lemma \ref{lemma: Airy sheet variational formula} with $K = \mathrm{supp}_{-\infty}(h_0)$ and $x_0, y_0>1$ as above, there is a random constant $L_0$ with tails as in Theorem~\ref{thm: intercept tail bound}, such that almost surely for all $y\in[1,y_0]$
    \begin{equation*}
    \mathfrak{h}(y) = \max_{\ell\leq L_0}(G_{\ell}+\mathcal{A}[(0,\ell)\to (y,1)])\,,
    \end{equation*}
    where
    \begin{equation}\label{eq: bdry data}
    G_\ell \equiv \sup_{ x\in \mathrm{supp}_{-\infty}(h_0)}(h_0(x)+\mathcal{A}[x\to(0,\ell)])\,.
    \end{equation}
    Observe that using the Remark following Lemma \ref{lemma: Airy sheet variational formula} and the notation there, $G_\ell< \infty$ and $\mathscr{F}_-$-measurable.

Thus, on the event $\{L_0\leq m\}$ the KPZ fixed point (at unit time) has the expression 
\begin{equation*}
\begin{array}{ll}
    \mathfrak{h}(y) &= \displaystyle\max_{x\in \mathrm{supp}_{-\infty}(h_0)}(h_0(x)+\mathcal{S}(x,y))= \displaystyle\max_{\ell \le m}(G_\ell + \mathcal{A}[(0,\ell)\to (y,1)])\,.
\end{array}
\end{equation*}

Since we can control the geodesic intercept $L_0$ using Theorem \ref{thm: intercept tail bound}, we are now led to studying the quantitative Brownian regularity of the the laws of the `truncated' profiles
\begin{equation}\label{eq: fin depth KPZ}
H_m(\cdot) = \displaystyle\max_{\ell \le m}(G_\ell + \mathcal{A}[(0,\ell)\to (\cdot, 1)])\,,\qquad m \ge 1
\end{equation}
against that of rate two Brownian motion on compacts, which are already known to satisfy the absolute continuity relation $\mathrm{Law}_{H_m}\ll \mathfrak{B}_{*, *}$ on compacts, \cite[Proposition 5.1]{sarkar2021brownian}.

We show in particular that the moments of the Wiener densities (of the laws restricted to any fixed compact $K\subset \R$)
$$\norm{\frac{\diff\mathrm{Law}_{H_m}}{\diff\mathfrak{B}_{*, *}}}_{L^p(\mathfrak{B}_{*, *})}< \infty $$ for $m\ge 1$ and importantly obtain asymptotics in $m\ge 1$. 

Such estimates are obtained in Theorem \ref{thm: main companion} in \cite{tassopoulos2025inhomogeneousbrownian} when one replaces the Airy environment with independent Brownian motions. To reduce the former to the latter, we will use the \textbf{Brownian Gibbs property} which the parabolic Airy line ensemble satisfies. However, this leads to the technical challenge of estimating Brownian inverse acceptance probabilities with Airy line ensemble endpoints, see Subsection \ref{subsec: Airy line ensemble}.

The crucial technical input that allows us to overcome this challenge comes from \cite{dauvergne2024wienerdensitiesairyline}, in order to estimate the inverse acceptance probability that comes from using the Brownian Gibbs property. In particular, we will need the following slight modification of \cite[Lemmas 3.2, 3.3]{dauvergne2024wienerdensitiesairyline} 
The goal of the next few lemmas is to estimate Brownian inverse acceptance probabilities (recall the terminology from Section \ref{sec: notation}) in terms of analytically tractable random variables involving a possibly resampled Airy line ensemble, which we can estimate using pathwise properties thereof.

\begin{lemma}{\cite[Lemma 3.3]{dauvergne2024wienerdensitiesairyline}}
	\label{lemma: bridge-shift-calc}
	Fix $t>1$, $a<s<t<b$ and let $\underline{x}, \underline{y} \in \R^m_>$. Let $g \in \mathscr{C}_{*,*}([a, b])$ be such that $g(a) < x_m, g(b) < y_m$.
	
	Let $B$ be a $m$-tuple of independent Brownian bridges from $(a, \underline{x})$ to $(b, \underline{y})$, conditioned on the event
	$$
	\mathrm{NoInt}([a, s] \cup [t, b], g) \quad \mathrm{ or }\quad \mathrm{NoInt}([a, b], g) 
	$$ 
	Fix $\varepsilon \in (0,1)$ and define $\iota = (1/m, 1/(m+1), \cdots, 1/(2m)) ,  {\bf 1} = (1, 1, \dots, 1) \in \R^m$, and for $\alpha, \beta \ge 0$ define $f^{\alpha, \beta} \in \mathscr{C}^m_{*, *}([a, b])$ by letting 
	$$
	f^{\alpha, \beta}(a) = 0, \quad f^{\alpha, \beta}(s) = f^{\alpha, \beta}(t) = \alpha \iota + \beta {\bf 1}, \quad f^{\alpha, \beta}(b) = 0,
	$$
	and so that $f^{\alpha, \beta}$ is linear on each of the pieces $[a, s], [s, t], [t, b]$.
	
	Then for $f \in \mathrm{NoInt}([a, s] \cup [t, b], g)$ (or $\mathrm{NoInt}([a, b], g) $)  we have the pointwise lower bound on the density of the law $\mu_B'$ of $B - f^{\alpha, \beta}$ against the law $\mu_B$ of $B$, on the set $\mathrm{NoInt}([a, 1] \cup [t, b], g)$ (or $\mathrm{NoInt}([a, b], g) $) where $\mu_B'$ is absolutely continuous with respect to $\mu_B$
	\begin{align}
	\label{eq: Radon-Nikodym -B-bd}
	\frac{\diff \mu^\prime_B}{\diff \mu_B}(f) &\ge \exp \left(-\zeta^2\frac{m(\alpha/m + \beta)^2}{4} - \zeta\frac{(\alpha/m + \beta)\sum_{i=1}^m [(f_i(s) - x_i)^+ + (f_i(t) - y_i)^+]}{4} \right)\\
	   &\ge \exp \left(-\zeta^2\frac{m(\alpha/m + \beta)^2}{4} - \zeta\frac{(\alpha/m + \beta)\sum_{i=1}^m [(f_i(s) - x_i)^2_+ + (f_i(t) - y_i)^2_+]}{4} \right)\,,
	\end{align} 
    where $\zeta = \frac{1}{\min(s-a, b-t)}$.
\end{lemma}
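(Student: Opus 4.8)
The plan is to peel off the conditioning, reducing the statement to the rate-two Cameron--Martin formula for the \emph{unconditioned} bridge law. Write $\phi:=f^{\alpha,\beta}$ for the shift profile and $\mathcal{N}$ for the event $\mathrm{NoInt}([a,s]\cup[t,b],g)$ (the case $\mathrm{NoInt}([a,b],g)$ is identical), let $\nu:=\mathfrak{B}^{[a,b]}_{\underline{x},\underline{y}}$ be the law of the $m$ independent unconditioned rate-two bridges, and set $p:=\nu(\mathcal{N})\in(0,1]$ (positive by standard facts about Brownian bridges), so that $\mu_B=\tfrac1p\,\nu(\,\cdot\cap\mathcal{N})$ and $\mu'_B$ is the push-forward of $\mu_B$ under the deterministic translation $w\mapsto w-\phi$.

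The first, and structurally the only delicate, point is an order-preservation property of $\phi$. Because $\alpha,\beta\ge0$ and the coordinates of $\iota$ are positive and strictly decreasing, the piecewise-linear profile satisfies $\phi_1(r)\ge\phi_2(r)\ge\cdots\ge\phi_m(r)\ge0$ for every $r\in[a,b]$ — its values at the kinks $s,t$ are $c_i:=\alpha\iota_i+\beta$, which are ordered and non-negative, and it vanishes at $a,b$ — so adding $\phi$ to an ordered tuple lying above $g$ preserves both features, i.e.\ $w\in\mathcal{N}\Rightarrow w+\phi\in\mathcal{N}$. Using this, for Borel $A\subseteq\mathcal{N}$ one has $\mu'_B(A)=\mu_B(A+\phi)=\tfrac1p\,\nu\big((A+\phi)\cap\mathcal{N}\big)=\tfrac1p\,\nu(A+\phi)$, and a change of variables $v=w+\phi$ together with the absolute continuity of the translated law $\nu_{-\phi}$ (the push-forward of $\nu$ under $v\mapsto v-\phi$, a shift of a Gaussian measure by the Cameron--Martin element $-\phi$) with respect to $\nu$ turns the last expression into $\int_A\tfrac{\diff\nu_{-\phi}}{\diff\nu}\,\diff\mu_B$. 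Hence $\mu'_B$ restricted to $\mathcal{N}$ is absolutely continuous with respect to $\mu_B$, and on $\mathcal{N}$
\[
\frac{\diff\mu'_B}{\diff\mu_B}=\frac{\diff\nu_{-\phi}}{\diff\nu},
\]
the barrier $g$ and the normalisation $1/p$ having cancelled. This is the substance of the lemma; what remains is to bound the right-hand side from below.

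That final step is a routine Gaussian computation. Since the $m$ bridges are independent and $\phi$ acts coordinatewise, $\diff\nu_{-\phi}/\diff\nu$ factorises over $i\in\llbracket1,m\rrbracket$, and since each $\phi_i$ is Lipschitz, vanishes at $a,b$, and is piecewise linear with its only kinks at $s$ and $t$, the $i$-th factor is a genuine Cameron--Martin tilt of the $i$-th bridge that depends on the path only through the increments $f_i(s)-x_i$ and $f_i(t)-y_i$ (the contribution of the bridge's linear drift to the Girsanov exponent vanishes precisely because $\phi_i(a)=\phi_i(b)=0$). Evaluating the rate-two Cameron--Martin exponent and using the crude bounds $c_i\le\alpha/m+\beta$ and $\tfrac1{s-a}+\tfrac1{b-t}\le2\zeta$, together with the observation that only the positive parts $(f_i(s)-x_i)^{+}$, $(f_i(t)-y_i)^{+}$ can decrease the density, yields the first displayed inequality; the second follows from it upon replacing each positive part by its square, valid on the range of increments that occurs in the later applications.

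I do not expect a serious obstacle: this is a minor variant of \cite[Lemma~3.3]{dauvergne2024wienerdensitiesairyline}. The one step that requires care is the bookkeeping of the second paragraph — one must check that, once everything is restricted to $\mathcal{N}$, the conditioned-bridge densities and the acceptance-probability factor $1/p$ cancel so that $\diff\mu'_B/\diff\mu_B$ genuinely reduces to the \emph{unconditioned} translated-bridge density, which is the only place the order-preservation of $\phi$ enters — after which the Cameron--Martin estimate is entirely standard, modulo tracking the numerical constants so as to match the displayed form.
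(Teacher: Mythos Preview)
Your proposal is correct and follows essentially the same route as the paper: both reduce to the unconditioned bridge law $\nu$, use the order-preservation property $f\in\mathcal{N}\Rightarrow f+f^{\alpha,\beta}\in\mathcal{N}$ to cancel the indicator and the normalising constant, and then bound the resulting Cameron--Martin/Girsanov exponential via $0\le f^{\alpha,\beta}_i\le\alpha/m+\beta$. The only cosmetic difference is that you phrase the reduction via pushforwards $\nu_{-\phi}$ whereas the paper writes the two densities $\diff\mu_B/\diff\nu$ and $\diff\mu'_B/\diff\nu$ explicitly and takes their ratio.
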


\begin{proof}
	Let $\nu$ be the law of $m$ independent Brownian bridges from $(a, \underline{x})$ to $(b, \underline{y})$. Observe first that $f^{\alpha, \beta}$ being piecewise linear, it is in the Sobolev space $W^{1,2}([a,b])$ and so by Girsanov's Theorem, for a rate two Brownian motion $W$ starting from $x\in \R$ on $[a, b]$, the Radon-Nikodym derivative of the process $W-f^{\alpha, \beta}$ against $W$ is given by
    \begin{equation*}
    \exp\left(-\frac{1}{2}\displaystyle\int_{[a,b]} \dot{f}^{\alpha, \beta}(s) \diff W_s - \frac{1}{4}\int_{[a,b]} (\dot{f}^{\alpha, \beta})^2(s) \diff s\right)
    \end{equation*}
    \begin{equation*}
     = \exp\left(-\frac{1}{2}(W_s-x)\frac{ f^{\alpha, \beta}(s)}{s-a} + \frac{1}{2}(W_{b}-W_{t})\cdot \frac{f^{\alpha, \beta}(t)}{b-t}  - \frac{1}{4}\left(\left(\frac{f^{\alpha, \beta}(s)}{s-a}\right)^2+ \left(\frac{f^{\alpha, \beta}(t)}{b-(t)}\right)^2\right)\right).
    \end{equation*}
    Now conditioning on $W_{b}=y\in \R$ and using the uniqueness of regular conditional distributions and the regularity of the conditional measures for Brownian bridges and the above Radon-Nikodym transform thereof, we can conclude by independence that $\mu_B, \mu_B'$ are absolutely continuous with respect to $\nu$ with densities
    \begin{equation*}
	\frac{d \mu_B}{d \nu}(f) = \frac{1}Z \mathbf{1}(f \in \mathrm{NoInt}([a, s] \cup [t, b], g))\,,
    \end{equation*}
    \begin{equation}
    \label{eq: Radon-Nikodym -nus}
    \begin{array}{cc}
	&\frac{d \mu_B'}{d \nu}(f) = \frac{1}{Z} \mathbf{1}(f + f^{\alpha, \beta} \in \mathrm{NoInt}([a, s] \cup [t, b], g)) \\
	&\displaystyle\cdot \exp \left(-c\frac{2 (f(s) -\underline{x}) \cdot \frac{f^{\alpha, \beta}(s)}{s-a} + \|\frac{f^{\alpha, \beta}(s)}{s-a}\|^2}{4} -c\frac{2 (f(t) -\underline{y}) \cdot \frac{f^{\alpha, \beta}(t)}{b-t)} + \| \frac{f^{\alpha, \beta}(t)}{b-t}\|^2}{4}\right)\,. 
	\end{array}
    \end{equation}
	where  $Z = \PP_{a, b}(\underline{x}, \underline{y}, g, [a, s] \cup [t, b])$ is a normalizing factor. Now, if $f$ is in the set $\mathrm{NoInt}([a, s] \cup [t, b], g)$, then so is $f + f^{\alpha, \beta}$. Hence the right-hand side of \eqref{eq: Radon-Nikodym -B-bd} is bounded below by the exponential factor \eqref{eq: Radon-Nikodym -nus}. We can bound \eqref{eq: Radon-Nikodym -nus} below by using $0 \le f^{\alpha, \beta} \le (\alpha/m + \beta) {\bf 1}$, which yields the desired bound.
\end{proof}

We now record \cite[Lemma 3.1]{dauvergne2024wienerdensitiesairyline}, stated slightly more generally, which allows one to estimate the conditional inverse acceptance probability by the inverse of a conditional probability over a resampled ensemble, by `stepping out' of the original interval and applying the Brownian Gibbs on that larger interval. For $a<b\in \R, k\in \N$ set $\mathcal{F}^{[a,b]}_k\equiv \sigma(\{A_i(x): (i,x)\notin \llbracket 1,m\rrbracket \times (a,b)\})$.

\begin{lemma}{\cite[Lemma 3.1]{dauvergne2024wienerdensitiesairyline}}\label{lemma: inv acceptance prob cond dauv}
	For every $a<y_0+1<y_0+2<b$ $k\in \N$, we have for any non-negative measurable functional $F$ on $\mathscr{C}(\llbracket 1, k\rrbracket \times [a,y_0+2]\cup [y_0+1,b])$ and any $\mathscr{F}^{[a,b]}_m$-measurable $U$
	\begin{equation}
    \begin{array}{ll}
	\label{E:big-eqn}
		&\displaystyle\mathbb{E}_{\mathscr{F}^{[a,b]}_k} \left(\frac{F(U, \mathcal{A}|_{\llbracket 1, k\rrbracket \times [a,y_0+1]\cup [y_0+2,b]})}{\mathfrak{B}^{[y_0+1,y_0+2]}_{\mathcal{A}^k(y_0+1),\mathcal{A}^k(y_0+2)}(\mathrm{NoInt}([y_0+1, y_0+2]\,, \mathcal{A}_{k+1}))} \right)\\
        & = \displaystyle\left(\frac{\mathbb{E}_{\mathscr{F}^{[a,b]}_k} [ F(U, \mathfrak{B}|_{\llbracket 1, k\rrbracket \times [a,y_0+1]\cup [y_0+2,b]}) ]}{\mathbb{E}_{\mathscr{F}^{[a,b]}_k} [\mathfrak{B}^{[y_0+1,y_0+2]}_{\mathfrak{A}^k(y_0+1),\mathfrak{A}^k(y_0+2)}(\mathrm{NoInt}([y_0+1, y_0+2]\,, \mathcal{A}_{k+1}))]} \right)\,,
        \end{array}
	\end{equation}
        where $\mathfrak{A}$ is a line ensemble constructed by letting $\mathfrak{A}_{i}(r) = \mathcal{A}_i(r)$ for $(i, r) \notin \llbracket 1, k\rrbracket \times [a, b]$, and let $\mathfrak{B}^k|_{[a, b]}$ have the conditional law given $\mathcal{A}$ outside of $\llbracket 1, k\rrbracket \times [a, b]$ of $k$ independent Brownian bridges from $(a, \mathcal{A}^k(a))$ to $(b, \mathcal{A}^k(b))$ conditioned on the event $\mathrm{NoInt}(\mathcal{A}_{k+1}, [a, y_0+1] \cup [y_0+2, b])$.
\end{lemma}

The following lemma is a refinement of \cite[lemma 3.2]{dauvergne2024wienerdensitiesairyline}, where non-intersection probabilities are estimated from below by more analytically tractable quantities. They are in turn controlled by the modulus of continuity estimates for the Airy line ensemble in Corollary \ref{cor: global mod airy M}.
\begin{lemma}{\cite[Lemma 3.2]{dauvergne2024wienerdensitiesairyline}}
	\label{lemma: acceptance prob LB}
	Fix $a<s<t<b$, $\varepsilon >0 $ and define $\mathscr{F}^{[a, b]}_m$-measurable random variables
	\begin{align*}
	D &= D(m, t)= 1 + \max_{r, r' \in [s, t]} |\mathcal{A}_{m+1}(r) - \mathcal{A}_{m+1}(r')|, \\
M &= M(m, y_0) = 1 + \max_{r, r' \in [a, b]} |\mathcal{A}_{m+1}(r)- \mathcal{A}_{m+1}(r')| + \max_{i \in \llbracket 1, m\rrbracket } |\mathcal{A}_i(b) - \mathcal{A}_i(a)|.
	\end{align*}
	Then with $\mathfrak{A}$ as in Lemma \ref{lemma: inv acceptance prob cond dauv}, we have
	\begin{equation*}
    \begin{array}{ll}
	& \mathbb{E}_{\mathscr{F}^{[a, b]}_m} [\mathfrak{B}^{[s,t]}_{\mathfrak{A}^m(s),\mathfrak{A}^m(t)}(\mathrm{NoInt}([s, t]\,, \mathcal{A}_{m+1}))]]\\
    & \ge C\exp\left(- m^{1+\varepsilon}(\zeta^2D^2 + \zeta MD) - m^{2+\varepsilon} \zeta D - c m\log (b-a) \right) \cdot c^m |t-s|^{\frac{m}{2}}\cdot \exp \left( - \frac{dm^{3-\varepsilon} (t-s)}{\varepsilon^2}\right)
	\end{array}
    \end{equation*}
    for some  $\zeta = \frac{3}{\min(s-a,t-b)}$ and constants $c, d, C>0$ independent of $m\in \N, \varepsilon >0$ (suppressing the dependence on $s, t$).
\end{lemma}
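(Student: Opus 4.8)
The plan is to follow the proof of \cite[Lemma 3.2]{dauvergne2024wienerdensitiesairyline}, using Lemma \ref{lemma: bridge-shift-calc} as the key lever, but feeding in the refined Airy modulus-of-continuity bounds of Corollary \ref{cor: global mod airy M} and Proposition \ref{prop: airy level conc} so as to track the dependence on the depth $m$. First I would condition on $\F^{[a,b]}_m$: given this $\sigma$-algebra the top $m$ lines on $[a,b]$ form a collection of $m$ independent rate-two Brownian bridges from $(a,(\mathcal{A}_i(a))_{i=1}^m)$ to $(b,(\mathcal{A}_i(b))_{i=1}^m)$ conditioned on the relevant non-intersection event ($\mathrm{NoInt}$ on $[a,b]$, resp. on $[a,s]\cup[t,b]$), and $\mathcal{A}_{m+1}$ is frozen. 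Writing $\mu_B$ for this conditional law, the Markov property of Brownian bridges identifies $\mathbb{E}_{\F^{[a,b]}_m}[\PP_{s,t}(\mathfrak{B}^{m}(s),\mathfrak{B}^{m}(t),\mathcal{A}_{m+1})]$ with the $\mu_B$-probability that, after resampling the middle piece on $[s,t]$ afresh, the ensemble remains non-intersecting and above $\mathcal{A}_{m+1}$ on $[s,t]$; in either case this is exactly the ratio $\mathfrak{B}^{[a,b]}_{\mathcal{A}^m(a),\mathcal{A}^m(b)}(\mathrm{NoInt}([a,b],\mathcal{A}_{m+1}))/\mathfrak{B}^{[a,b]}_{\mathcal{A}^m(a),\mathcal{A}^m(b)}(\mathrm{NoInt}([a,s]\cup[t,b],\mathcal{A}_{m+1}))$, so it suffices to bound this middle acceptance probability below.

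The obstacle is that the bridge endpoints $\mathfrak{B}^m(s),\mathfrak{B}^m(t)$ may have arbitrarily small consecutive gaps and sit arbitrarily close to the barrier, so a naive non-intersection estimate is useless; this is where the shift $f^{\alpha,\beta}$ enters. Applying Lemma \ref{lemma: bridge-shift-calc} with free parameters $\alpha,\beta\ge0$ and changing variables from $\mathfrak{B}$ to $\mathfrak{B}-f^{\alpha,\beta}$, the middle acceptance probability equals $\mathbb{E}_{\mu_B}\big[\PP_{s,t}(\mathfrak{B}^m(s)+\alpha\iota+\beta\mathbf{1},\mathfrak{B}^m(t)+\alpha\iota+\beta\mathbf{1},\mathcal{A}_{m+1})\cdot\tfrac{\diff\mu_B'}{\diff\mu_B}(\mathfrak{B})\big]$, and since the shifted endpoints have consecutive gaps at least $\alpha/(4m^2)$ and clear the barrier's oscillation once $\beta\gtrsim D$, the inner probability is at least a deterministic constant $p_0$ obtained from a standard well-separated-bridge lower bound: forcing each of the $m$ bridges to stay within a tube of width $\sim\alpha/m^2$ around its linear interpolant (Lemma \ref{lemma: brownian bridge maximum} supplying the single-bridge input) and accounting for the ordered Vandermonde prefactor yields $p_0\ge\exp(-c\,m(t-s)/\delta^2-c\,m\log(b-a))$ with $\delta\asymp\alpha/m^2$. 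Restricting the expectation to the favourable event $\{K(\mathfrak{B})\le K_0\}$, where $K$ is the exponent in \eqref{eq: Radon-Nikodym -B-bd} and $K_0$ is a threshold with $\mu_B(K>K_0)\le p_0/2$, gives the lower bound $\tfrac12\,p_0\,e^{-K_0}$.

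It remains to choose the parameters and estimate $K_0$. On a further favourable event — of $\mu_B$-probability bounded below by an absolute constant, obtained by dominating the conditioned bridge increments by unconditioned ones via the monotone coupling of Lemma \ref{lemma: bridge monotonicity} and applying standard Brownian-bridge tail bounds — the random sum $\sum_{i=1}^m[(\mathfrak{B}_i(s)-\mathcal{A}_i(a))_+^2+(\mathfrak{B}_i(t)-\mathcal{A}_i(b))_+^2]$ is controlled in terms of the increments $|\mathcal{A}_i(b)-\mathcal{A}_i(a)|$ and the bridge oscillation over $[a,b]$, both of which are absorbed into $M$ via Corollary \ref{cor: global mod airy M}, while $\beta\asymp D$ captures the barrier oscillation over $[s,t]$ from Proposition \ref{prop: airy level conc}. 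Taking $\beta\asymp D$ and $\alpha\asymp\epsilon\,m^{1+\epsilon/2}$ then turns the $m(t-s)/\delta^2$ term in $p_0$ into $d\,m^{3-\epsilon}(t-s)/\epsilon^2$, and makes $K_0$ at most a constant multiple of $\zeta^2\frac{m(\alpha/m+\beta)^2}{4}+\zeta(\alpha/m+\beta)\cdot\big(\text{the controlled sum}\big)\lesssim m^{1+\epsilon}(\zeta^2D^2+\zeta MD)+m^{2+\epsilon}\zeta D$; combined with the $m\log(b-a)$ prefactor from $p_0$ this is exactly the claimed bound.

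The main obstacle is precisely this last balancing. One must pick $\alpha$ (and the favourable-event thresholds) so as to simultaneously make the separation $\delta\asymp\alpha/m^2$ large enough that the non-intersection factor decays only like $m^{3-\epsilon}(t-s)/\epsilon^2$, yet keep $\alpha/m$ small and the product $(\alpha/m+\beta)\sum_{i=1}^m[(\cdot)_+^2]$ small enough that the shift cost stays within the stated powers of $m$; the somewhat unusual exponents $m^{1+\epsilon}$, $m^{2+\epsilon}$, $m^{3-\epsilon}$ are exactly the output of this optimization, and the delicate point is verifying that the improved inputs from Corollary \ref{cor: global mod airy M} and Proposition \ref{prop: airy level conc} are strong enough that the random sum in the shift cost is genuinely absorbed into the $m^{1+\epsilon}\zeta MD+m^{2+\epsilon}\zeta D$ terms uniformly in $m$.
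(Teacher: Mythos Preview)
Your overall strategy --- shift the bridge via Lemma \ref{lemma: bridge-shift-calc} to manufacture separation and barrier clearance, bound the non-intersection on $[s,t]$ by a tube argument (Lemma \ref{lemma: brownian bridge maximum}), and control the shift cost on a favourable event obtained through monotone coupling (Lemma \ref{lemma: bridge monotonicity}) --- is exactly the paper's strategy, and your choice $\alpha\asymp\epsilon m^{1+\epsilon/2}$, $\beta\asymp D$ is equivalent to the paper's shift vector $\underline z=(D+i^{\epsilon/2})_{i=1}^m$ (both give minimum gap $\asymp\epsilon m^{\epsilon/2-1}$ and maximum component $\asymp m^{\epsilon/2}D$). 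So the spine of the argument is right.

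There are, however, two bookkeeping errors worth flagging. First, your citations of Corollary \ref{cor: global mod airy M} and Proposition \ref{prop: airy level conc} are misplaced: the bound in the lemma is \emph{conditional} on $\F^{[a,b]}_m$, so $D$ and $M$ are given (they are $\F^{[a,b]}_m$-measurable by definition). No tail bounds on them are needed here --- those enter only later, in Lemma \ref{lemma: inv acceptance prob cond}. The sum $\sum_i[(\mathfrak B_i(s)-\mathcal A_i(a))_+ +(\mathfrak B_i(t)-\mathcal A_i(b))_+]$ is controlled on the favourable set $A_\Delta=\{\mathfrak B_i(s)\le\mathcal A_i(a)+\Delta,\ \mathfrak B_i(t)\le\mathcal A_i(b)+\Delta\}$ simply by the definition of $A_\Delta$; the value $\Delta=5M+3m$ (the $3m$ giving the $m^{2+\epsilon}\zeta D$ term you correctly record) comes out of the monotone coupling with bridges started at $\underline w_i^{a,b}=M+i+\mathcal A_i(\{a,b\})$, not from any Airy tail input.

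Second, your allocation of the $m\log(b-a)$ term is wrong. You place it inside $p_0$ via a ``Vandermonde prefactor'', but the well-separated-bridge tube bound on $[s,t]$ yields only $\exp(-cm(t-s)/\delta^2)$; there is no $(b-a)$-dependence there. In the paper the $m\log(b-a)$ term is precisely the lower bound on the $\mu_B$-probability of the favourable set $A_{5M+3m}$, obtained by monotone coupling and then forcing $m$ independent bridges on $[a,b]$ to stay in tubes of width $1/100$ about their linear interpolants (cf.\ \cite[Lemma 2.5]{dauvergne2024wienerdensitiesairyline}). In particular, your claim that the favourable event has $\mu_B$-probability bounded below by an \emph{absolute} constant is not correct; reallocating as above fixes the issue and recovers the stated exponent.
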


\begin{proof}
    All statements in the proof are conditional on $\mathscr{F}^{[a, b]}_m$. Define the $\mathscr{F}^{[a, b]}_m$-measurable vector 
	$$
	\underline{z} = (D + m^{\varepsilon/2}, D + (m-1)^{\varepsilon/2}, \dots, D + 1)
	$$
and the $\mathscr{F}^{[a, b]}_m$-measurable set
\begin{equation}
\label{eq: x-y}
O = \{(\underline{x}, \underline{y}) \in \R^m_> \times \R^m_> : x_m > \mathcal{A}_{m+1}(s), y_m > \mathcal{A}_{m+1}(t) \}.
\end{equation} 
By the definition of $D$, for $(\underline{x}, \underline{y}) \in O$ we have noting that $m^{\varepsilon/2}-(m-1)^{\varepsilon/2}\ge \varepsilon/(2m^{1-\varepsilon/2})$, $m\ge 2$ inclusion and independence
\begin{equation}
\label{eq: P0t}
\PP_{s, t}(\mathrm{NoInt}(\underline{x} + \underline{z}, \underline{y} + \underline{z}, \mathcal{A}_{m+1})) \ge \PP\Big (\sup_{s \le r \le t} |B(r)| \le \varepsilon/(4m^{1-\varepsilon/2})\Big)^m,
\end{equation}
where $B$ is a rate two Brownian bridge from $(s,0)$ to $(t, 0)$. By Lemma \ref{lemma: brownian bridge maximum}, we have 
\begin{equation*}\PP(\displaystyle\max_{s\le r\le t}|B_t|\le \varepsilon/(4m^{1-\varepsilon/2}))\ge c |t-s|^{\frac{1}{2}}\exp \left( - \frac{dm^{2-\varepsilon} (t-s)}{\varepsilon^2}\right)\,,\quad \text{ for all }\varepsilon>0, m\ge 1\end{equation*}
and so the right hand side in (\ref{eq: P0t}) is bounded below by
\begin{equation*} c^m |t-s|^{\frac{m}{2}} \exp \left( - \frac{dm^{3-\varepsilon} (t-s)}{\varepsilon^2}\right)\end{equation*}
for some constant $c>0$, which may change from line to line. Therefore letting $\mu_\mathfrak{A}$ denote the conditional law of $(\mathfrak{A}^{m}(s), \mathfrak{A}^{m}(t))$ given $\mathscr{F}^{[a, b]}_m$, to complete the proof it suffices to find a set $A \subseteq O$ such that 
$
\mu_\mathfrak{A}(A + (\mathbf{z}, \mathbf{z}))
$
is large. Fix $\Delta > 0$ and let $A_\Delta$ be the $\mathscr{F}^{[a, b]}_m$-measurable subset of $(\underline{x}, \underline{y}) \in O$  where 
$$
x_i \le \mathcal{A}_i(a) + \Delta, \qquad \qquad y_i \le \mathcal{A}_i(b) + \Delta 
$$
for all $i \in \llbracket 1, m \rrbracket$. Then by Lemma \ref{lemma: bridge-shift-calc} with $\alpha = 1, \beta = D$, we have
\begin{align}
\nonumber
\mu_\mathfrak{A}(A_\Delta + (\mathbf{z}, \mathbf{z})) &\ge \mu_\mathfrak{A}(A_\Delta) \inf_{(\underline{x}, \underline{y}) \in A_\Delta} \exp\biggl(
-\frac{\zeta^2 m^{1+\varepsilon}(1 + D)^2}{4} \\
&- \zeta \frac{m^{\varepsilon/2}(1 + D) \sum_{i=1}^m ((x_i - \mathcal{A}_i(a))^+ + (y_i - \mathcal{A}_i(b))^+)}{4}
\biggr) \\
\label{eq: mufB}
& \ge \mu_\mathfrak{A}(A_\Delta) \exp \left(-\zeta^2 m^{1+\varepsilon} D^2 - \zeta m^{1+\varepsilon/2} D \Delta \right)
\end{align}

where $\zeta = \frac{3}{\min(s-a, b-t)}$. In the final line we have used that $1 + D \le 2D$.
It remains to find $\Delta$ where $\mu_\mathfrak{A}(A_\Delta)$ is large.

Define vectors $\underline{w}^{a,b}$ for $i\in \llbracket 1, m \rrbracket$ at $a, b$ respectively, where 
\begin{equation*}
    \underline{w}_i^{a,b} =  M + i + \mathcal{A}_i(\{a, b\})\,.
\end{equation*}

 By a monotonic coupling for Brownian bridges, see \ref{lemma: bridge monotonicity}, on the interval $[a, b]$, the $m$-tuple $(\mathfrak{A}_{1}, \dots, \mathfrak{A}_{m})$ is stochastically dominated by $m$ independent Brownian bridges $B = (B_1, \dots, B_m)$ from $(a, \underline{w}^{a})$ to $(b, \underline{w}^{b})$ conditioned on the event
 $$
  \mathrm{NoInt}([a, s] \cup [t, b], \mathcal{A}_{m+1}).
 $$
 Now, let $L \in \mathscr{C}^m([a, b]; \R)$ be the function whose $i$th coordinate $L_i$ is the linear function satisfying $L_i(a,b) = w_i^{a, b}$. By \cite[Lemma 2.5]{dauvergne2024wienerdensitiesairyline}, we have $f \in  \mathrm{NoInt}([a, s] \cup [t, b], \mathcal{A}_{m+1})$ for any sequence of bridges $f$ from $(a, \underline{w}^{a})$ to $(b, \underline{w}^{b})$ when $\norm{f-L}_{\infty, [a,b]} \le 1/100$ with probability bounded below by $c\mathrm{e}^{-dm\log (b-a) }$ for positive constants $c,d> 0$. This allows us to estimate
 \begin{align*}
 \PP(B_i(r) &\le  M + i + 2 + \mathcal{A}_i(-b)\lor \mathcal{A}_i(b)\quad \forall i \in \llbracket 1, m \rrbracket, r = s, t)\\
 & \ge \PP(\norm{B-L}_{\infty, [a,b]} < 1/100)\ge c\mathrm{e}^{-d m\log (b-a)}\,.
 \end{align*}
 Observing that 
 $$
 M + i+ 2 + \mathcal{A}_i(a)\lor \mathcal{A}_i(b) - \mathcal{A}_i(\{a, b\})\le 5M+ 3m
 $$
 for all $i$, we can conclude that 
 $$
 \mu_\mathfrak{A}(A_{5M+3m}) \ge \PP((B(s), B(t)) \in A_{5M+3m}) \ge c\mathrm{e}^{-dm\log (b-a)}. 
 $$
 Combining this with the bound on \eqref{eq: P0t} and \eqref{eq: mufB} and simplifying yields the result.
\end{proof}

Before proving the quantitative Brownian regularity of finite depth truncations of the KPZ fixed point against Brownian motion, we need one final preliminary result estimating the expected value of the inverse acceptance probability that appears in the conditioning when applying the Brownian Gibbs property to the Airy line ensemble, which is the content of the following lemma.

\begin{lemma}\label{lemma: inv acceptance prob cond}
Fix $m\in \N$, $t > 0$ $\varepsilon > 0$, then there exists some universal $\eta, \theta >0$ such that the following estimate holds
\begin{equation*}
\mathbb{E}\left [\displaystyle\frac{1}{\mathfrak{B}^{[0,t]}_{(\mathcal{A}_i(0))_{i=1}^m, (\mathcal{A}_i(t))_{i=1}^m}(\mathrm{NoInt}([0,t], \mathcal{A}_{m+1}))} \right ] = O_t(\mathrm{e}^{d_{t, \varepsilon}m^{6 + \varepsilon}}) = O(\mathrm{e}^{d_{\varepsilon}t^\eta}\mathrm{e}^{dm^{\theta}}).
\end{equation*}
for some $d_{\varepsilon} > 0$. 
\end{lemma}
\begin{proof}
    
We first begin by `stepping outside' of the interval $[0,t]$ and condition on $\mathscr{F}^{[-T_m,U_m]}\subseteq \mathscr{F}^{[0,t]}_m$, for $T_m, U_m>0 $ sufficiently large, to be chosen later. To control the inverse acceptance probability (\ref{eq: inv acc prob}) conditional on $\mathscr{F}^{[-T_m,U_m]}$, we use Lemma \ref{lemma: acceptance prob LB} and the lower bound provided by Lemma \ref{lemma: acceptance prob LB} to obtain for all $\varepsilon>0$
\begin{align*}
&\mathbb{E}\left [\displaystyle\frac{1}{\mathfrak{B}^{[0,t]}_{(\mathcal{A}_i(0))_{i=1}^m, (\mathcal{A}_i(t))_{i=1}^m}(\mathrm{NoInt}([0,t], \mathcal{A}_{m+1}))} \right ]\le c^m t^{-\frac{m}{2}} \\
&\cdot \mathbb{E}\left [\exp\left(\big(m^{1+\varepsilon}(\zeta^2 D^2 + \zeta MD+ m^{2+\varepsilon} \zeta D)\big) + c'm\log(U_m+T_m)  \right)\cdot \exp \left( \frac{dm^{3-\varepsilon} t}{\varepsilon^2}\right)  \right ]
\end{align*}
for some constants $c, c'> 0$, and
\begin{itemize}
    \item[1.] the $\mathscr{F}^{[-T_m, U_m]}_m$-measurable random variables
	\begin{align*}
	D &= D(m, y_0)= 1 + \max_{r, r' \in [0, t]} |\mathcal{A}_{m+1}(r) - \mathcal{A}_{m+1}(r')|, \\
M &= M(m, T_m, U_m) = 1 + \max_{r, r' \in [-T_m, U_m]} |\mathcal{A}_{m+1}(r) - \mathcal{A}_{m+1}(r')|\\
&+ \max_{i \in \llbracket 1, m\rrbracket } |\mathcal{A}_i(U_m) - \mathcal{A}_i(-T_m)|
	\end{align*}
    \item[2.] $\zeta = \frac{3}{\min(T_m, U_m-t)}$.
\end{itemize}

We will henceforth take $T_m = U_m = O(m^{\alpha})+t$ for some $\alpha >0$ so that $\zeta = m^{-\alpha}$. In particular, taking $\alpha = 2+2\varepsilon + \eta$, with $\eta >0 $, we estimate using the elementary inequality for $a,b\ge 0$, $2ab\le a^2 + b^2$
\begin{align*}
         \mathbb{E}\left [\exp\left(m^{1+\varepsilon}(\zeta^2 D^2 + \zeta MD) + m^{2+\varepsilon} \zeta D\right) \right ]&\le \frac{1}{2}\mathbb{E}\left [\exp\left(2m^{-1}D^2 \right) \right ]\\
         &+ \frac{1}{4}\mathbb{E}\left [\exp\left(4m^{1+\varepsilon}\zeta MD \right) \right ] + \frac{1}{4}\mathbb{E}\left[\exp\left( 4 m^{2+\varepsilon} \zeta D\right)\right]\\
         & \le \mathbb{E}\left [\exp\left(2m^{-1}D^2 \right) \right ]+ \frac{1}{2}\mathbb{E}\left [\exp\left(c m^{2+2\varepsilon}\zeta^2 M^2 \right) \right ]\\
         & \le O(\mathrm{e}^{dt^\theta}\mathrm{e}^{dm^{2}\log m }) + \mathbb{E}\left [\exp\left(cm^{-\eta}\zeta M^2 \right) \right ]\\
    \end{align*}
for some constants $c, d, \theta>0$.

By Corollary \ref{cor: global mod airy M}, we  have that there exist some positive $C_1, C_2,d > 0$ independent of $m$ such that for all $a>0$,
     \begin{equation*}
     \PP(M> a) \le C_1 \mathrm{e}^{dm^{3\alpha}} \mathrm{e}^{-C_2 a^2/m^\alpha}\,.
     \end{equation*}
In particular, keeping track of $t$-dependence, we obtain
   \begin{equation*}
     \PP(M> a) \le C_1 \mathrm{e}^{dm^{\theta}+dt^\theta} \mathrm{e}^{-C_2 a^2}\,,
     \end{equation*}
for some absolute constant $\theta>0$.

We now estimate
\begin{align*}
    O_t(\mathrm{e}^{dm^{2}\log m }) &+ \mathbb{E}\left [\exp\left(cm^{-\eta}\zeta M^2 \right) \right ]\\
         & \le O_t(\mathrm{e}^{dm^{2}\log m }) + 2c\displaystyle\int_{0}^{\infty} a\exp\left(cm^{-2-2\varepsilon -2\eta} a^2 \right)\PP(M > a)\diff a \\
         & \le O_t(\mathrm{e}^{dm^{2}\log m }) + O(\mathrm{e}^{dm^{3\alpha}})\displaystyle\int_{0}^{\infty} a\exp\left(cm^{-2-2\varepsilon -2\eta} a^2 - C_2  m^{-2-2\varepsilon -\eta}a^2 \right)\diff a \\
         & = O_t(\mathrm{e}^{dm^{6+6\varepsilon + 6\eta}})\,,
\end{align*}
for positive constants $c>0$, concluding the proof. One can obtain analogous expressions, keeping track of the $t$-dependence to finally conclude the proof.
\end{proof}

We are now in a position to obtain the quantitative control of the spatial increments of finite depth truncations of the KPZ fixed point, \eqref{eq: fin depth KPZ} started from compactly supported initial data in terms of the Wiener measure and Airy line ensemble data.

Recall, the notation
\begin{equation*}
    G_\ell \equiv \sup_{ x\in \mathrm{supp}_{-\infty}(h_0)}(h_0(x)+\mathcal{A}[x\to(0,\ell)])\,,\quad \ell \ge 1
    \end{equation*}
    for the `boundary data' appearing in \eqref{eq: fin depth KPZ}.
\begin{theorem}\label{thm: finite depth KPZ estimates a priori}
 Fix $m\in \N$, $T_m>0 , U_m>y_0+2$ and define the random continuous function
\begin{equation*}
    H_m(y) = \displaystyle\max_{\ell \le m}(G_\ell + \mathcal{A}[(0,\ell)\to (y,1)]), \quad \text{ for } y\in [1,y_0].
\end{equation*}

Then with $\mu$ the rate two Wiener measure $\mu$ on $[0,y_0-1]$, $H_m(\cdot+1)-H_m(1)$, satisfies for all $p, r> 1$, $A\subseteq \mathscr{C}_{*, *}([0,y_0-1])$ Borel and $a> 0$ the estimates
\begin{align*}
    \PP ( H_m(\cdot + 1)&-H_m(1)\in A ) \le O_{y_0}(\exp(m^7))\cdot\exp\left({\frac{y_0m^2 a^2}{4}\big(\frac{r/(r-1)}{ry_0/(r-1)+1}-\frac{1}{y_0+1}\big)}\right)\\
        & \cdot \displaystyle\sup_{\max_{1\le \ell \le m}|G_\ell - G_1| \le a}\norm{Q^{m, G}}_{L^{2r/(r-1)}(\mu)} \cdot \mu(A)^{\frac{1}{r}(1-\frac{1}{p})}\\
        & + O_{y_0}(\exp(m^7))\cdot \mathbb{P}\left(\displaystyle \max_{1\le \ell \le m}|G_\ell - G_1| + \max_{1\le i \le m}|\mathcal{A}(y_0+1)-\mathcal{A}(0)|\ge a\right)^{1/p}\\
        & = O(\exp(m^\theta + y_0^\theta))\cdot\exp\left({\frac{y_0m^2 a^2}{4}\big(\frac{r/(r-1)}{ry_0/(r-1)+1}-\frac{1}{y_0+1}\big)}\right)\\
        & \cdot \displaystyle\sup_{\max_{1\le \ell \le m}|G_\ell - G_1| \le a}\norm{Q^{m, G}}_{L^{2r/(r-1)}(\mu)} \cdot \mu(A)^{\frac{1}{r}(1-\frac{1}{p})}\\
        & + O(\exp(m^\theta + y_0^\theta))\cdot \mathbb{P}\left(\displaystyle \max_{1\le \ell \le m}|G_\ell - G_1| + \max_{1\le i \le m}|\mathcal{A}(y_0+1)-\mathcal{A}(0)|\ge a\right)^{1/p} \,.
\end{align*}

for some $\theta > 0$, where 
\begin{itemize}
\item[1.] $Q^{m, G}$ is the Radon-Nikodym  derivative of 
\begin{equation*}
Y^{m, G}\equiv \displaystyle\max_{1\le \ell \le m}(G_{\ell} -G_1 + B[(0,\ell)\to (\cdot+1,1)])-\displaystyle\max_{1\le \ell \le m}(G_{\ell} -G_1 + B[(0,\ell)\to (1,1)])
\end{equation*}
against rate two Brownian motion on $[0, y_0-1]$
\item[2.] $G$ denotes the boundary data $G = (G_{\ell})_{\ell=1}^m$, $G_\ell = \displaystyle\max_{x\in \mathrm{supp}_{-\infty}(h_0)}(h_0(x)+\mathcal{A}[x\to (0,\ell)])$
    \item[3.]  $\zeta = \frac{3}{\min(1+T_m, U_m-t)}$.
\end{itemize}

\end{theorem}

\begin{proof}
    First, fix $t>y_0$ and condition on the sigma algebra $\mathscr{F}^{[0, t]}_m$.

By the Brownian Gibbs property enjoyed by the Airy line ensemble, we get that conditioning on the sigma algebra $\mathcal{F}^{[0,t]}_m$, the law of $\mathcal{A}$ on $\llbracket 1, k\rrbracket\times[0,t]$ has the law of $m$ independent Brownian bridges with starting points $(\mathcal{A}_i(0))_{i=1}^m$ and ending at $(\mathcal{A}_i(t))_{i=1}^m$ conditioned to not intersect each other and the bottom line $\mathcal{A}_{m+1}$, an event in $\mathscr{C}^m_{*, *}([0, t])$. This conditional law has Radon-Nikodym Derivative against $m$ independent Brownian bridges with starting points $(\mathcal{A}_i(0))_{i=1}^m$ and ending at $(\mathcal{A}_i(t))_{i=1}^m$ 
\begin{equation}
\label{eq: inv acc prob}
\displaystyle\frac{\mathbf{1}_{\mathrm{NoInt}([0,t], \mathcal{A}_{m+1})}(\omega)}{\mathfrak{B}^{[0,t]}_{(\mathcal{A}_i(0))_{i=1}^m, (\mathcal{A}_i(t))_{i=1}^m}(\mathrm{NoInt}([0,t], \mathcal{A}_{m+1}))}
\end{equation}
for paths $\omega$ in $\mathscr{C}^m_{*, *}([0, t])$.

Now, by the by metric composition for LPP, and the $\mathscr{F}^{[0,t]}_m$-measurability of $G_\ell, 1 \le \ell\le m$, we obtain

\begin{equation*}
    \begin{array}{ll}
         & \PP \left( \displaystyle\max_{1\le \ell \le m}(G_\ell + \mathcal{A}[(0,\ell)\to (\cdot,1)])\in A \big| \mathcal{F}^{[0,t]}_m\right) \\
         & =  \mathfrak{B}^{[0,t]}_{(\mathcal{A}_i(0))_{i=1}^m, (\mathcal{A}_i(t))_{i=1}^m} \left(\Bigg\{ \omega \in C^m_{(\mathcal{A}_i(0))_{i=1}^m, (\mathcal{A}_i(t))_{i=1}^m}([0,t]):\displaystyle\max_{1\le \ell \le m}(G_\ell + \omega[(0,\ell)\to (\cdot,1)])\in A \Bigg\}\right)
    \end{array}
\end{equation*}
where
\begin{equation*}
G_\ell \equiv \displaystyle\max_{x\in \mathrm{supp}_{-\infty}(h_0)}(h_0(x)+\mathcal{A}[x\to (0,\ell)])\,.
\end{equation*}

Now, by Lemma \ref{lemma: brownian bridge comparison lemma}, we have that the law of the first $m$ lines of $\mathcal{A}(\cdot)-\mathcal{A}(0)$ on $[0,y_0]$ conditional on $\mathcal{F}^{[0,t]}_m$ is absolutely continuous with respect to the law of $m$ independent rate two Brownian motions on $[0,t]$ with bounded Radon-Nikodym derivative
\begin{equation*}
\frac{\diff\mathfrak{B}^{[0, t]}_{\underline{0}, \underline{\mathcal{A}}}|_{[0,y_0]}}{\diff\mathfrak{B}^{[0, y_0]}_{\underline{0}, *}}
\end{equation*}
against rate two Brownian motion on paths in $\mathscr{C}_{0,*}([0,t-1])^{m}$ with norms
\begin{equation*}
    \begin{array}{cc}
         & \norm{\frac{\diff\mathfrak{B}^{[0, t]}_{\underline{0}, \mathcal{A}}|_{[0,y_0]}}{\diff\mathfrak{B}^{[0, y_0]}_{\underline{0}, *}}}_{L^p\left(\mathfrak{B}^{[0, y_0]}_{\underline{0}, *}\right)} = \frac{(t/(t-y_0))^{\frac{m}{2}}}{(px/(t-y_0)+1)^{\frac{m}{2}}}\cdot \exp\left({\frac{y_0\norm{\mathcal{A}^m(t)-\mathcal{A}^m(0)}^2}{4(t-y_0)}\big(\frac{p}{(p-1)y_0+t}-\frac{1}{t}\big)}\right)
    \end{array}
\end{equation*}
        for all $p>1$ and 
\begin{equation*}
  \begin{array}{cc}
    \norm{\frac{\diff\mathfrak{B}^{[0, t]}_{\underline{0}, \underline{\mathcal{A}}}|_{[0,y_0]}}{\diff\mathfrak{B}^{[0, y_0]}_{\underline{0}, *}}}_{L^\infty\left(\mathfrak{B}^{[0, y_0]}_{\underline{0}, *}\right)} = (t/(t-y_0))^{\frac{m}{2}}\cdot \exp\left(\frac{\norm{\mathcal{A}^m(t)-\mathcal{A}^m(0)}^2}{4t}\right). 
        \end{array}
\end{equation*}

Combining all of the above, we deduce almost surely for any $A\subseteq \mathscr{C}_{0,*}([0,y_0-1])$ Borel measurable that
\begin{align*}
     \PP \bigg(\displaystyle\max_{1\le \ell \le m}(G_\ell &+ \mathcal{A}[(0,\ell)\to (\cdot+1,1)])-\displaystyle\max_{1\le \ell \le m}(G_\ell + \mathcal{A}[(0,\ell)\to (1,1)])\in A \bigg| \mathcal{F}^{[0,t]}_m\bigg)\\
     & \le \displaystyle\frac{\mu^{\mathcal{A}(0), \mathcal{A}(t), G}(A)^{1-1/p}}{\mathfrak{B}^{[0,t]}_{(\mathcal{A}_i(0))_{i=1}^m, (\mathcal{A}_i(t))_{i=1}^m}(\mathrm{NoInt}([0,t], \mathcal{A}_{m+1}))^{(p-1)/p}} \,,
\end{align*}
for all $p>1$, where $\mu^{\mathcal{A}(0), \mathcal{A}(t), G}(\cdot)$ denotes the law of 
\begin{align*}
\mu^{\mathcal{A}(0), \mathcal{A}(t), G}(\cdot) \equiv \PP \bigg( &\displaystyle\max_{1\le \ell \le m}(G_\ell + \mathfrak{B}[(0,\ell)\to (\cdot+1,1)])\\
-&\displaystyle\max_{1\le \ell \le m}(G_\ell + \mathfrak{B}[(0,\ell)\to (1,1)])\in \cdot \bigg)
\end{align*}
where $\mathfrak{B}$ is an ensemble of $m$ independent Brownian bridges with starting and ending points $(0, \mathcal{A})$ and $(t, \mathcal{A}(t))$ respectively.

Thus, by H\"{o}lder inequality, the unconditional probability can be estimated as 
\begin{equation*}
    \begin{array}{ll}
         & \PP \left( \displaystyle\max_{1\le \ell \le m}(G_\ell + \mathcal{A}[(0,\ell)\to (\cdot+1,1)])-\displaystyle\max_{1\le \ell \le m}(G_\ell + \mathcal{A}[(0,\ell)\to (1,1)])\in A \right)\\
         & \le\mathbb{E}\left [\displaystyle\frac{1}{\mathfrak{B}^{[0,t]}_{(\mathcal{A}_i(0))_{i=1}^m, (\mathcal{A}_i(t))_{i=1}^m}(\mathrm{NoInt}([0,t], \mathcal{A}_{m+1}))} \right ]^{(p-1)/p}  \cdot  \mathbb{E}\left[\mu^{\mathcal{A}(0), \mathcal{A}(t), G}(A)^{p-1}\right]^{1/p}
    \end{array}
\end{equation*}
for all $p>1$.

Now, to estimate the first term, we `step outside' of the interval $[0,t]$ and condition on $\mathscr{F}^{[-T_m,U_m]}\subseteq \mathscr{F}^{[0,t]}_m$, for $T_m, U_m$ sufficiently large, to be chosen later. To control the inverse acceptance probability (\ref{eq: inv acc prob}) conditional on $\mathscr{F}^{[-T_m,U_m]}$, we use Lemma \ref{lemma: acceptance prob LB} and the lower bound provided by Lemmas \ref{lemma: acceptance prob LB} and \ref{lemma: inv acceptance prob cond} to obtain
\begin{align}\label{eq: estimate prob}
         \PP ( \displaystyle\max_{1\le \ell \le m}(G_\ell &+ \mathcal{A}[(0,\ell)\to (\cdot+1,1)])-\displaystyle\max_{1\le \ell \le m}(G_\ell + \mathcal{A}[(0,\ell)\to (1,1)])\in A )\nonumber\\
         & \le O(\exp(m^\theta + y_0^\theta))  \cdot  \mathbb{E}\left[\mu^{\mathcal{A}(0), \mathcal{A}(t), G}(A)^{p-1}\right]^{1/p}
    \end{align}
for all $p>1$ and some universal constant $c>0$.

To estimate the second term in (\ref{eq: estimate prob}), let $Q^{m, G}$ be the Radon-Nikodym  derivative of 
\begin{equation*}
Y^{m, G}\equiv \displaystyle\max_{1\le \ell \le m}(G_{\ell} -G_1 + B[(0,\ell)\to (\cdot+1,1)])-\displaystyle\max_{1\le \ell \le m}(G_{\ell} -G_1 + B[(0,\ell)\to (1,1)])
\end{equation*}
against rate two Brownian motion on $[0, y_0-1]$ (here we treat the initial data $G$ as fixed in $\R^m_>$). Note that by \cite[Theorem~58]{dellacherie2011probabilities}, we can take $Q^{m, G}$ to be jointly measurable in $\tilde{G}$ and paths $\xi$ in Wiener space on $[0,y_0-1]$. Now, by \cite[Theorem~4.3]{sarkar2021brownian} $Y^{m, G}$ can be expressed as the top line of a sequence of upwardly reflected Brownian motions with boundary data $G_{\ell} -G_1$ $1\le \ell\le m$, hence its Radon-Nikodym  derivative against Brownian motion can be estimated from Theorem~\ref{thm: main companion}, and in particular, $Q^{m, G}\in L^{\infty-}(\mu)$ for all choices of boundary data $G$, where $\mu$ is the restriction of the (rate two) Wiener measure on $[0,y_0-1]$.

Combining all of the above, we deduce the following norm estimates for the Radon-Nikodym derivatives $Q^{\underline{x}, \underline{y}, G}$ of $\mu^{x, y, G}(\cdot)$ for all data $\underline{x}, \underline{y}, G\in \R^m_{>}$
\begin{equation*}
    \begin{array}{cc}
         \norm{Q^{\underline{x}, \underline{y}, G}}_{L^{p}(\mu)}& \le \displaystyle\frac{(t/(t-y_0))^{\frac{m}{2}}}{(py_0/(t-y_0)+1)^{\frac{m}{2}}}\cdot \exp\left({\frac{y_0\norm{\underline{x-y}}^2}{4(t-y_0)}\big(\frac{p}{(p-1)y_0+t}-\frac{1}{t}\big)}\right)\\
         & \cdot \norm{Q^{m, G}}_{L^{2p}(\mu)}\cdot\mu(A)^{1-\frac{1}{p}}
    \end{array}
\end{equation*}
for all $p>1$. 

For the second term in (\ref{eq: estimate prob}), we estimate with $t = y_0+1$ for all $a>0$ by H\"{o}lder's inequality
    \begin{equation*}
        \begin{array}{ll}
             &\mathbb{E}[\mu^{\mathcal{A}(0), \mathcal{A}(t), G}(A)^{p-1}]^{1/p}\\
             &\le \mathbb{E}\left[\mu^{\mathcal{A}(0), \mathcal{A}(t), G}(A)^{\frac{p-1}{p}}\mathbf{1}\left(\displaystyle \max_{1\le \ell \le m}|G_\ell - G_1| + \max_{1\le i \le m}|\mathcal{A}(y_0+1)-\mathcal{A}(0)|<a\right)\right]^{1/p}\\
             & + \mathbb{P}\left(\displaystyle \max_{1\le \ell \le m}|G_\ell - G_1| + \max_{1\le i \le m}|\mathcal{A}(y_0+1)-\mathcal{A}(0)|\ge a\right)^{1/p}\\
        \end{array}
    \end{equation*}

    Now, for $r\in (1,\infty)$, combining the two estimates above, we obtain
\begin{align*}
     &\mathbb{E}\left[\mu^{\mathcal{A}(0), \mathcal{A}(t), G}(A)^{p-1}\right]^{1/p} \\
    &\le \mathbb{E}\left[\exp\left({\frac{py_0\norm{\underline{\mathcal{A}(y_0+1)-\mathcal{A}(0)}}^2}{4}\big(\frac{r/(r-1)}{ry_0/(r-1)+1}-\frac{1}{y_0+1}\big)}\right)\cdot \norm{Q^{m, G}}^{p}_{L^{2r/(r-1)}(\mu)}\right.\\
    & \left.\cdot \mathbf{1}\left(\displaystyle \max_{1\le \ell \le m}|G_\ell - G_1| + \max_{1\le i \le m}|\mathcal{A}(y_0+1)-\mathcal{A}(0)|<a\right)\right]^{1/p}\cdot\mu(A)^{\frac{1}{r}(1-\frac{1}{p})}\\
    & + \mathbb{P}\left(\displaystyle \max_{1\le \ell \le m}|G_\ell - G_1| + \max_{1\le i \le m}|\mathcal{A}(y_0+1)-\mathcal{A}(0)|\ge a\right)^{1/p}\\
    &\le \exp\left({\frac{y_0m^2 a^a}{4}\big(\frac{r/(r-1)}{ry_0/(r-1)+1}-\frac{1}{y_0+1}\big)}\right)\\
    &\cdot \displaystyle\sup_{\max_{1\le \ell \le m}|G_\ell - G_1| \le a}\norm{Q^{m, G}}_{L^{2r/(r-1)}(\mu)} \cdot \mu(A)^{\frac{1}{r}(1-\frac{1}{p})}\\
     & + \mathbb{P}\left(\displaystyle \max_{1\le \ell \le m}|G_\ell - G_1| + \max_{1\le i \le m}|\mathcal{A}(y_0+1)-\mathcal{A}(0)|\ge a\right)^{1/p}\,,\\
\end{align*}
    which when combined with (\ref{eq: estimate prob}), concludes the proof of the second part.
\end{proof}

\section{Putting it all together: quantitative Brownian regularity}\label{sec: brownian regularity combined}

In this section, we establish the quantitative Brownian regularity of the KPZ fixed point started from arbitrary (finitary) initial data, Theorem~\ref{thm: KPZ reg finitary}.
 
First, recall the definition of the semi-infinite last passage values from (\ref{eq: Airy limit}). To summarise what we have obtained so far, recall that having established the quantitative comparison in Theorem~\ref{thm: finite depth KPZ estimates a priori}, we have estimated for $m\ge 1$, the truncated, finite-depth KPZ fixed point
\begin{equation}\label{eq: finite depth trun}
H_m(\cdot) = \displaystyle\max_{1 \le \ell \le m}(G_\ell + \mathcal{A}[(0,\ell)\to (\cdot ,1)]), \qquad y\in [1,y_0]
\end{equation}
in terms of 
\begin{itemize} 
\item[1.] the boundary data $G = (G_{\ell})_{\ell=1}^m$, $G_\ell = \displaystyle\max_{x\in \mathrm{supp}_{-\infty}(h_0)}(h_0(x)+\mathcal{A}[x\to (0,\ell)])$
\item[2.] $Q^{m, \tilde{G}^a}$, the Radon-Nikodym  derivatives of 
\begin{equation*}
Y^{m, \tilde{G}^a}\equiv \displaystyle\max_{1\le \ell \le m}(\tilde{G}^a_{\ell}  + B[(0,\ell)\to (\cdot+1,1)])-\displaystyle\max_{1\le \ell \le m}(\tilde{G}^a_{\ell} + B[(0,\ell)\to (1,1)])
\end{equation*}
against rate two Brownian motion on $[0, y_0-1]$, where $\tilde{G}^a = (-a\land (G_{\ell} -G_1) \lor a)_{\ell=1}^m$, for some $a > 0$
\item[3.] and the tails of $\max_{1\le \ell \le m} |G_\ell - G_1|$.
\end{itemize}

Now, Theorem~\ref{thm: main companion} allows us to estimate $L^p(\mu)$-norms of $Y^{m, \tilde{G}^a}$ for all $a>0$, $p>1$. Thus, the only missing ingredient is to estimate the tails of 
\begin{equation*}
G_\ell - G_1 \equiv \displaystyle\max_{x\in \mathrm{supp}_{-\infty}(h_0)}(h_0(x)+\mathcal{A}[x\to (0,\ell)]) - \displaystyle\max_{x\in \mathrm{supp}_{-\infty}(h_0)}(h_0(x)+\mathcal{A}[x\to (0,1)])\,.
\end{equation*}

This can be done uniformly in the initial data in terms of differences between levels in the Airy line ensemble. This follows from the interplay between the non-intersecting nature of the Airy line ensemble and the pathwise construction of the finite depth truncations of the KPZ fixed point \eqref{eq: finite depth trun} in terms of iterated Pitman transforms/Skorokhod reflections. We record the following lemma (which we state in slightly more generality), which captures this `cutoff'.

\begin{lemma}\label{lemma: bdry data trunc}
    Consider the processes
    \begin{equation*}
    H_m(y) = \max_{\ell\leq m}(G_{\ell}+\mathcal{A}[(0,\ell)\to (y,1)])\,, \qquad y\ge 0\,, m\ge 1
    \end{equation*}
    where the `boundary data' $G_\ell$ are almost surely finite and 
    \begin{equation*}
    \mathcal{F}_{-}\equiv \sigma(\{\mathcal{A}_i(x):x\leq 0,i=1,2,\cdots\})\end{equation*}
    -measurable. 
    
    Then, one can estimate for all $A$ Borel subsets on paths $\mathscr{C}_{0,*}(I; \R)$ for $I\subseteq (0,\infty)$ compact, 
    \begin{equation*}
    \PP(H_m(\cdot + \inf I)-H_m (\inf I)\in A) \le c\mu(A)\end{equation*}
    \begin{equation*}
    + \displaystyle\sum_{m^* = 1}^{m-1}\PP(\max_{1\le \ell\leq m^*}(G'_{\ell}+\mathcal{A}[(0,\ell)\to (\cdot + \inf I ,1)])-\max_{1\le \ell\leq m^*}(G'_{\ell}+\mathcal{A}[(0,\ell)\to (\inf I ,1)])\in A)\,,
    \end{equation*}
    for some $c>0$, where 
    \begin{equation*}
    0\le G'_{\ell} = m \cdot\left(|G_\ell - G_m|\land |\mathcal{A}_1(0) - \mathcal{A}_m (0)|\right)\,, \quad 1 \le y \le m^*.
    \end{equation*}
\end{lemma}
\begin{proof}
    Fix $m\ge 1$. Since the Airy lines are non-intersecting, if for some $1\le i \le m-1$
    \begin{equation*}
    |G_i-G_{i+1}|\ge |\mathcal{A}_i(0)-\mathcal{A}_m(0)|\,,
    \end{equation*}
    one has
    \begin{equation*}
    H_m(y) = \max_{1\le \ell\leq i}(G_{\ell}+\mathcal{A}[(0,\ell)\to (y,1)])\,, y\ge 0\,.
    \end{equation*}
    This is because by \cite[Theorem~4.3]{sarkar2021brownian} one can obtain $H_m$ as the upward reflection (top line of the Pitman transform of)
    \begin{equation*}
    G_1 + \mathcal{A}_1(y)\,,y\ge 0 
    \end{equation*}
    against
    \begin{equation*}
     \max_{2\le \ell\leq m}(G_{\ell}+\mathcal{A}[(0,\ell)\to (y,2)])\,, y\ge 0\,.
    \end{equation*}
    Thus, by an easy induction we can express (see Figure \ref{fig: pitman gap})
    \begin{equation*}
    H_m(y) = \max_{1\le \ell\leq m^*}(G_{\ell}+\mathcal{A}[(0,\ell)\to (y,1)])\,,y\ge 0\,,
    \end{equation*}
    where
    \begin{equation*}
    m^* = \inf\{1\le i\le m-1 : |G_i-G_{i+1}| > |\mathcal{A}_i(0)-\mathcal{A}_m(0)|\}\,.
    \end{equation*}
    \begin{figure}
        \centering
        \includegraphics[width=0.7\linewidth]{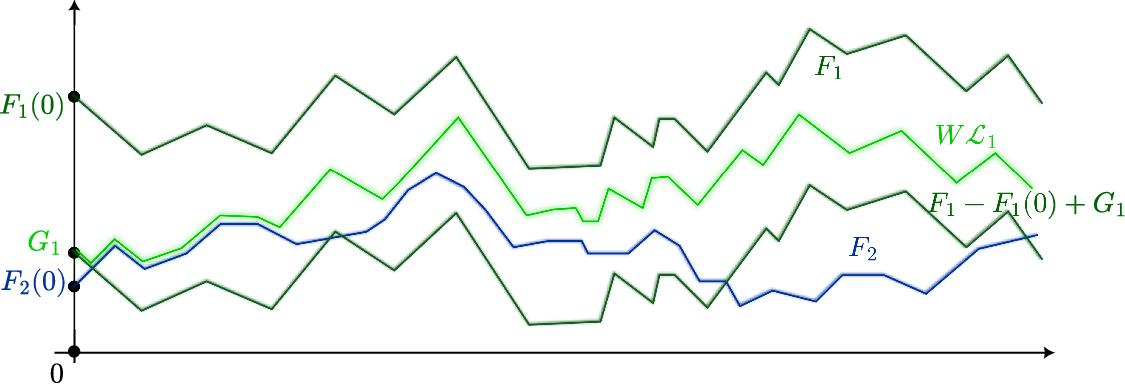}
        \caption{Cartoon illustration of top lines of Pitman transforms of two environments $F = (F_1, F_2)$ and $\mathcal{L} = (F_1-F_1(0)+G_1, F_2)$ for $F_2(0) < G_1$. Notice, since $F_1 > F_2$ pointwise, if $G_2 > F_1(0)$, then $WF_1 = \mathcal{L}$.}
        \label{fig: pitman gap}
    \end{figure}
    Now, if the above set is empty, it means $|G_1-G_{2}|> |\mathcal{A}_1(0)-\mathcal{A}_m(0)|$ which gives 
    \begin{equation*}
    H_m(y) = G_1 + \mathcal{A}_1(y)\,,y\ge 0 \,.
    \end{equation*}
    Thus, we can estimate for any $A$ Borel in $\mathscr{C}_{0,*}([0, \sup I- \inf I])$ for the increment processes,
    \begin{align*}
        \PP(H_m(\cdot + \inf I)-H_m (\inf I)\in A)&\le \PP(m^* = -\infty, H_m(\cdot + \inf I)-H_m (\inf I)\in A)\\
        &+ \PP\left(\max_{1\le \ell\leq m^*}(G_{\ell}+\mathcal{A}[(0,\ell)\to (y,1)])\right)\\
        &\le \PP(\mathcal{A}_1(\cdot + \inf I)-\mathcal{A}_1(\inf I)\in A)\\
        &+ \PP\bigg(\max_{1\le \ell\leq m^*}(G_{\ell}-G_m+\mathcal{A}[(0,\ell)\to (\cdot + \inf I ,1)])\\
        &-\max_{1\le \ell\leq m^*}(G_{\ell}-G_m+\mathcal{A}[(0,\ell)\to (\inf I ,1)])\in A\bigg)\\
        &\le c\mu(A)+ \displaystyle\sum_{m^* = 1}^{m-1}\PP\bigg(\max_{1\le \ell\leq m*}(G'_{\ell}+\mathcal{A}[(0,\ell)\to (\cdot + \inf I ,1)])\\
        &-\max_{1\le \ell\leq m^*}(G'_{\ell}+\mathcal{A}[(0,\ell)\to (\inf I ,1)])\in A\bigg)\,,
    \end{align*}
    for some $c>0$, where 
    \begin{equation*}
    0\le G'_{\ell} = (G_\ell - G_m)\land m \cdot|\mathcal{A}_1(0) - \mathcal{A}_m (0)|\,,
    \end{equation*}
    using the triangle inequality in the last step and the fact that the $\mathcal{A}_1$ has the law of the Airy$_2$ process, which has a bounded density against Brownian motion, by \cite[Theorem~1.1]{dauvergne2024wienerdensitiesairyline}.
\end{proof}

Thus, from now on, we will implicitly assume the boundary data $G_1 - G_\ell$ are bounded above by 
\[
m \cdot |\mathcal{A}_1(0) - \mathcal{A}_m (0)|\,.
\]
This will allow us to obtain stretched exponential tails for these boundary value differences uniformly in the initial data. Indeed, by \cite[Corollary 5.3]{dauvergne2021}, there exists an exponent $\eta > 1$ such that for all $a > 1, m\ge 1$,
\begin{equation}\label{eq: airy one point diff sub exp}
    \PP(|\mathcal{A}_1(0) - \mathcal{A}_m (0)| \ge a\cdot m^\eta)\le c\mathrm{e}^{-da}
\end{equation}
for universal $c, d> 0$.

Now we are in a position to state the following concentration result for differences in boundary values $G_1-G_\ell$ in the following lemma. 

\begin{lemma}\label{lemma: boundary data control}
    For $\ell\in \N$, $\alpha > 0,$ with $G_\ell \equiv \displaystyle\max_{x\in \mathrm{supp}_{-\infty}(h_0)}(h_0(x)+\mathcal{A}[x\to (0,\ell)])$. Suppose further that the max-plus support of $h_0$ is countable. Then, there exists some universal exponent $\delta > 0$ such that for all $m\ge 1$,
    \begin{equation*}\mathbb{P}\bigg(\displaystyle\max_{1\le \ell\le m}|G_\ell-G_1|\ge m \bigg)\lesssim \mathrm{e}^{-dm^\delta}\,,
    \end{equation*}
    for some universal $d > 0$.
\end{lemma}
\begin{proof}
This is a direct application of \eqref{eq: airy one point diff sub exp} and a union bound in conjunction with the bound from Lemma \ref{lemma: bdry data trunc}.
\end{proof}

\color{black}
Combining the above, we are now in a position to provide a quantitative Brownian comparison for the spatial increments of finite depth truncations of the KPZ fixed point.

\begin{theorem}\label{thm: finite depth KPZ bounds}
    Let $m\in \N, y_0 \ge 1, \alpha > 0$ and let the finite depth truncations $H_m(\cdot)$ be as in (\ref{eq: finite depth trun}) with continuous and bounded initial data $h_0$. Then, one has for all $p>1$, $r>1$ and $A$ Borel the estimates
\begin{equation*}
    \begin{array}{cc}
        &\PP \left( H_m(\cdot + 1)-H_m(1)\in A \right)\le O_{y_0, K, p,r}\left(\exp(m^\eta)\cdot \mu(A)^{\frac{1}{r}(1-\frac{1}{p})}+ \mathrm{e}^{-d_{y_0, p} m}\right)\,,
    \end{array}
\end{equation*}
where $\eta > 1$, $d_{y_0, p}>0$ are universal constants.
\end{theorem}
\begin{proof}
    Theorem~\ref{thm: finite depth KPZ estimates a priori} gives that, with $\mu$ the rate two Wiener measure on $[0,y_0-1]$, $H_m(\cdot+1)-H_m(1)$ satisfies the norm estimates
the following holds for all $p, r> 1$, $A\subseteq \mathscr{C}_{*, *}([0,y_0-1])$ Borel and $a> 0$
\begin{equation*}
    \begin{array}{ll}
        \PP ( H_m(\cdot + 1)-H_m(1)\in A ) &\le O_{y_0}(\exp(m^7))\cdot\bigg( \mu(A)^{\frac{1}{r}(1-\frac{1}{p})}\cdot\exp\left({\frac{y_0m^2 a^2}{4}\big(\frac{r/(r-1)}{ry_0/(r-1)+1}-\frac{1}{y_0+1}\big)}\right)\\
        &\cdot \displaystyle\sup_{\max_{1\le \ell \le m}|G_\ell - G_1| \le a}\norm{Q^{m, G}}_{L^{2r/(r-1)}(\mu)}  \\
        & + \mathbb{P}\left(\displaystyle \max_{1\le \ell \le m}|G_\ell - G_1| + \max_{1\le i \le m}|\mathcal{A}(y_0+1)-\mathcal{A}(0)|\ge a\right)^{1/p}\bigg) \,,
    \end{array}
\end{equation*}
for all $p,r>1$, $\varepsilon \in(0,1)$ and some universal constant $c>0$, where 
\begin{itemize} 
\item[1.] $G$ denotes the boundary data $G = (G_{\ell})_{\ell=1}^m$,
$G_\ell = \displaystyle\max_{x\in \mathrm{supp}_{-\infty}(h_0)}(h_0(x)+\mathcal{A}[x\to (0,\ell)])$.
\item[2.] $Q^{m, G}$ is the Radon-Nikodym  derivative of 
\begin{equation*}
Y^{m, G}\equiv \displaystyle\max_{1\le \ell \le m}(G_{\ell} -G_1 + B[(0,\ell)\to (\cdot+1,1)])-\displaystyle\max_{1\le \ell \le m}(G_{\ell} -G_1 + B[(0,\ell)\to (1,1)])
\end{equation*}
against a rate two Brownian motion on $[0, y_0-1]$\,.  
\end{itemize}
Now, a union, Lemmas \ref{lemma: bdry data trunc} and \ref{lemma: boundary data control} give for all $p>1$, $r>1$ and $A$ Borel, the estimates
\begin{equation*}
    \begin{array}{ll}
        &\PP \left( H_m(\cdot + 1)-H_m(1)\in A \right) \le O_{y_0, K, h_0}\exp\left(c_{y_0, K, h_0} m^\eta\right)\\
        & \cdot  \displaystyle\sup_{\displaystyle\max_{1\le \ell \le m}|G_\ell - G_1| \le m^\eta}\norm{Q^{m, G}}_{L^{4}(\mu)} \cdot \mu(A)^{\frac{1}{r}(1-\frac{1}{p})}+ O_{y_0, K}(\mathrm{e}^{-d_{y_0, p} m})\,,
    \end{array}
\end{equation*}
for some $d_{y_0, p}, c_{y_0, K, h_0} >0$ and $\eta > 1$. Now, using the control on the Radon-Nikodym  derivative of upward reflections of Brownian motion from Theorem~\ref{thm: main companion} we obtain, 
 \begin{equation*}
     \begin{array}{cc}
         \displaystyle\sup_{\displaystyle\max_{1\le \ell \le m}|G_\ell - G_1| \le m^\eta}\norm{Q^{m, G}}_{L^{2r/(r-1)}(\mu)} &\le O_{y_0, r}m^{\eta m^2} \mathrm{e}^{d_r m^2\log m + c_{y_0, r}m^{2\eta + 1}}\,.
    \end{array}
\end{equation*}

We thus have for all $p>1$, $r>1$ and $A$ Borel the estimates
\begin{equation*}
    \begin{array}{ll}
        &\PP \left( H_m(\cdot + 1)-H_m(1)\in A \right)\\
        & \le O_{y_0, K, h_0, r}\exp\left(c_{y_0, K, r} m^{2\eta}\right) \cdot \mu(A)^{\frac{1}{r}(1-\frac{1}{p})}+ O_{y_0, K, p}(\mathrm{e}^{-d_{y_0, p} m})\,,
    \end{array}
\end{equation*}
for some $d_{y_0, p}> 0$. Now, $c_{y_0, K, r}$ can be absorbed into the $O_{y_0, K, h_0, r}$ term by an application of Young's inequality concluding the proof.
\end{proof}

Having established the quantitative Brownian regularity for spatial increments of finite depth truncations of the KPZ fixed point in the previous Theorem, we now translate this to a quantitative comparison of spatial increments of the actual KPZ fixed point at unit time. This comparison is expressed in terms of the geodesic intercepts $L_0$ as defined in Theorem~\ref{thm: intercept tail bound}, which is the final step before obtaining the main result. Note that the comparison is uniform over a wide class of initial data. 

\begin{corollary}\label{cor: KPZ log comparison BM}
 Fix $y_0>1$ and let $\mathfrak{h}(\cdot)\equiv \mathfrak{h}_1(\cdot)$ be the KPZ fixed point at unit time on $[1,y_0]$ as defined in (\ref{eq: KPZ fixed point}). Then for all initial data with countable `max-plus' supports contained in some fixed compact set $K\subset \R$, and $A$ Borel on has the estimates 
\begin{equation*}
\begin{array}{cc}
     \PP(\mathfrak{h}(\cdot + 1)-\mathfrak{h}(1)\in A) \le O_{y_0, K, p,s} \left(\mu(A)^{\frac{\varepsilon}{s}(1-\frac{1}{p})}+ \mathrm{e}^{-d_{y_0, p}m^*} +  \PP(L_0\geq m^*)\right)\,,
\end{array}
\end{equation*}
for any $p>1$, $s>1$  $\varepsilon \in (0,1)$ and some $d_{y_0, p}> 0$, where
\begin{equation*}
m^* =  \displaystyle \sup \left\{m\in \mathbb{N} :  \left(m^\eta \le \log \left(\frac{1}{ \mu(A)^{\varepsilon\theta}}\right)\right)\right\}<\infty\,,
\end{equation*}
for some universal $\eta > 0$.
\end{corollary}

\begin{proof}[Proof of Corollary \ref{cor: KPZ log comparison BM}]
From Theorem~\ref{thm: finite depth KPZ bounds}, we have the estimates for all $p>1$, $s>1$ and $A$ Borel
\begin{equation*}
    \PP \left( H_m(\cdot + 1)-H_m(1)\in A \right)\le O_{y_0, K, p,s}\left(\exp(m^\eta)\cdot \mu(A)^{\frac{1}{r}(1-\frac{1}{p})}+ \mathrm{e}^{-d_{y_0, p} m}\right)
\end{equation*}
for some $d_{y_0,p}, \eta > 0$.
One thus estimates for all $A$ Borel measurable 
\begin{equation*}
\begin{array}{ll}
     \PP(\mathfrak{h}(\cdot + 1)-\mathfrak{h}(1)\in A) &= \displaystyle\inf_{m\in \mathbb{N}}\PP(\mathfrak{h}(\cdot + 1)-\mathfrak{h}(1)\in A, L_0\le m) + \PP(L_0\geq m+1) \\
     & \le \displaystyle\inf_{m\in \mathbb{N}}\PP(H_m(\cdot+1)-H_m(1)\in A, L_0\le m) + \PP(L_0\geq m)\\
     & \le  \displaystyle\inf_{m\in \mathbb{N}} O_{y_0, K, p,s}\left(\exp(m^\eta)\cdot \mu(A)^{\frac{1}{r}(1-\frac{1}{p})}+ \mathrm{e}^{-d_{y_0, p} m}+\PP(L_0 \ge m)\right)\,,
\end{array}
\end{equation*}
Now, with $\theta = \frac{1}{s}(1-1/p)$ and any $\varepsilon \in (0,1)$, let 
\begin{equation*}
m^* =  \displaystyle \sup \left\{m\in \mathbb{N} :  \left(m^\eta \le \log \left(\frac{1}{ \mu(A)^{\varepsilon\theta}}\right)\right)\right\}<\infty\,.
\end{equation*}
One further estimates for all $A$ Borel
\begin{equation*}
\begin{array}{cc}
     \PP(\mathfrak{h}(\cdot + 1)-\mathfrak{h}(1)\in A) \le O_{y_0, K, p,s}\left(\mu(A)^{\frac{\varepsilon}{s}(1-\frac{1}{p})} + \mathrm{e}^{-d_{y_0, p}m^*} +  \PP(L_0\geq m^{*})\right)\,,
\end{array}
\end{equation*}
for some $d > 0$, concluding the proof. 
\end{proof}

Finally, using the tail bounds on $L_0$ established in Theorem \ref{thm: intercept tail bound}, we establish using Corollary \ref{cor: KPZ log comparison BM} the uniform quantitative Brownian regularity of spatial increments of the KPZ fixed points started from initial data with bounded and countable `max-plus' supports. The uniformity is with respect to a suitable class of initial data. We take the `max-plus' support of the initial data to be countable for measurability reasons (to ensure the boundary data in \eqref{eq: bdry data} are random variables and use the Airy sheet coupling \ref{def: Airy sheet}), though this can always be guaranteed for upper semi-continuous initial data (by picking a suitable countable dense subset of the `max-plus' support of the initial data).

\begin{theorem}\label{thm: KPZ law local uniform comparison}
     Let $\mathfrak{h}_t(\cdot), t\ge 0$ be the KPZ fixed point as defined in \eqref{eq: KPZ fixed point}. Then, fixing $t>0$ and any $\ell<r$ both bounded, with $|\ell|+ |r|\le y_0$ for some $y_0>0$, one obtains the estimates for all  $p>1$,  $s>1$, $A$ Borel measurable $A\subseteq \mathscr{C}_{0,*}([0,r-\ell])$ with $\mu(A) > 0$
\begin{equation*}
\begin{array}{cc}
     \PP(\mathfrak{h}(\cdot + 1)-\mathfrak{h}(1)\in A) \le O_{y_0, K, p,s}\left(\mu(A)^{\frac{\varepsilon}{s}(1-\frac{1}{p})} + \exp\bigg({-d_p y_0^\theta  \log^{\eta} \left(1/\mu(A)\right)}\bigg)\right)\,,
\end{array}
\end{equation*} 
for some $\eta> 0$, $d_p>0$ uniformly in initial data in the class
\begin{align*}
 h_0\in \mathcal{F}_{K}\equiv\{&h_0: \R \to\mathbb{R}\cup \{-\infty\}:\,\mathrm{supp}_{-\infty}(h_0)\subseteq K \text{ and is countable}\}\,.
\end{align*}
\end{theorem}

\begin{proof}[Proof of Theorem~\ref{thm: KPZ law local uniform comparison}]
By the $1:2:3$ scaling invariance of the directed landscape, we can without loss of generality assume that $t = 
\ell = 1$. For ease of notation, let $\mathfrak{h}(\cdot)\equiv \mathfrak{h}_1(\cdot)$ denote the KPZ fixed point at unit time.

Recall the notation from Corollary \ref{cor: KPZ log comparison BM}, we now re-express
\begin{equation*}
m^* =  \displaystyle \sup \left\{m\in \mathbb{N} :  \left(m^\eta \le \log \left(\frac{1}{ \mu(A)^{\varepsilon\theta}}\right)\right)\right\}=\bigg\lfloor \log^{1/\eta} \left(\frac{1}{ \mu(A)^{\varepsilon\theta}}\right)\bigg\rfloor\,.
\end{equation*}

Moreover, Corollary \ref{cor: KPZ log comparison BM} and Theorem \ref{thm: intercept tail bound} (also Proposition \ref{prop: global modulus airy} for the functional form in terms of $y_0$ of the coefficients in the exponent) give for all $A$ Borel the estimates
\begin{equation*}
\begin{array}{cc}
     \PP(\mathfrak{h}(\cdot + 1)-\mathfrak{h}(1)\in A) \le O_{y_0, K, p,s}\left(\mu(A)^{\frac{\varepsilon}{s}(1-\frac{1}{p})} + \mathrm{e}^{-d_p y_0^\theta(m^*)^\delta}\right)\,,
\end{array}
\end{equation*}
for some $\theta, \delta, d_p > 0$. We can thus estimate further, (for all $\mu(A)$ sufficiently small)
\begin{equation*}
\begin{array}{cc}
     \PP(\mathfrak{h}(\cdot + 1)-\mathfrak{h}(1)\in A) \le O_{y_0, K, p,s}\left(\mu(A)^{\frac{\varepsilon}{s}(1-\frac{1}{p})} + \exp\bigg({-d_p y_0^\theta  \log^{\eta} \left(1/\mu(A)\right)}\bigg)\right)\,,
\end{array}
\end{equation*} 
for some $\eta> 0$, concluding the proof.
\end{proof}

We are now in a position to prove the main result of this paper, the extension of quantitative Brownian regularity to all finitary initial data. In brief, this will be achieved through a localisation argument using global shape estimates enjoyed by the directed landscape, allowing us to control the support of the initial data `seen' by the KPZ fixed point on compacts with high probability.

\begin{theorem}\label{thm: KPZ reg finitary}
     Let $\mathfrak{h}_t(\cdot), t\ge 0$ be the KPZ fixed point as defined in \eqref{eq: KPZ fixed point}  where $h_0$ is $t$-finitary. Then, for any fixed $\ell<r$ with $|\ell|+ |r|\le y_0$ for some $y_0>0$, there exists some universal $\eta > 0$ such that the estimates for all $A$ Borel measurable $A\subseteq \mathscr{C}_{0,*}([0,r-\ell])$ with rate two Wiener measure $\mu(A) > 0$
\begin{equation}\label{eq: brown reg fin rate fn}
\begin{array}{cc}
     &\PP(\mathfrak{h}_t(\cdot + \ell)-\mathfrak{h}_t(\ell)\in A)\le c_{t, y_0, h_0}\exp\bigg({-d_{t, y_0, h_0}\log^{\eta} \left(1/\mu(A)\right)}\bigg)\,,
\end{array}
\end{equation} 
for some universal $\eta> 0$ and constants $c_{t, y_0, h_0}, d_{t, y_0, h_0}>0$.
\end{theorem}

\begin{proof}[Proof of Theorem~\ref{thm: KPZ law local uniform comparison}]
By the $1:2:3$ scaling invariance of the directed landscape, we can without loss of generality assume that $t = 2$. Using the metric composition law, we can now express the KPZ fixed point on $[\ell, r]$
\begin{equation*}
\mathfrak{h}_2(y) = \displaystyle\max_{x\in \R}(\mathfrak{h}(x)+\mathcal{S}(x,y))\,,\qquad y\in [\ell, r]\,.
\end{equation*}
where $\mathfrak{h}$ denotes the \emph{random} initial data 
\begin{equation*}
\mathfrak{h}(y) = \displaystyle\max_{x\in \R}(h(x)+\mathcal{S}'(x,y))
\end{equation*}
where $\mathcal{S}'(\cdot,\cdot)$ is an Airy sheet independent of $\mathcal{S}(\cdot, \cdot)$. Recall that, being $2$-finitary, $h_0$ satisfies
\begin{equation*}
\displaystyle\lim_{|x|\to \infty}\frac{h_0(x)-x^2/2}{|x|} = -\infty\,.
\end{equation*}
Thus, there exists some $x_0>0$ deterministic dependent on $h_0$ such that
\begin{equation*}
h_0(x)-x^2/2 \le -(y_0+1)|x|\,, \qquad \text{ for all } |x|\ge x_0\,.
\end{equation*}
Henceforth, we will treat $x_0$ as fixed (only depending on our domain of comparison, which is fixed). Additionally, from \cite{dauvergne2022three} the Airy sheets satisfy almost sure pointwise bounds 
\begin{equation*}
|\mathcal{S}(x,y)+(x-y)^2|\le \mathfrak{C}+c\log^{2/3}(2+|x|+|y|)\,,\qquad \text{ for all } x,y\in \R 
\end{equation*}
and 
\begin{equation*}
|\mathcal{S}'(x,y)+(x-y)^2|\le \mathfrak{C}'+c\log^{2/3}(2+|x|+|y|)\,,\qquad \text{ for all } x,y\in \R 
\end{equation*}
for some universal constant $c>0$ and some $\mathfrak{C}, \mathfrak{C}'$ independent (identically distributed) both satisfying $\mathbb{E}[a^{\mathfrak{C}^{3/2}}+a^{\mathfrak{C}'^{3/2}}]<\infty$ for some $a>1$. By rescaling one obtains analogous estimates for $\mathcal{L}(0,\cdot \,,;\,, \cdot, t)$ for any $t>0$ fixed.

Observe that for $|x|\ge x_0$, $y\in [-y_0,y_0]$
\begin{equation*}
h_0(x) + \mathcal{L}(x,0;y,2)\le -c|x|-y^2/2+c\log(1+|y|)
\end{equation*}
for some $c>0$. Now, (assuming without loss of generality that $h_0$ is supported at the origin)
\begin{equation*}
h_0(x) + \mathcal{L}(x,0;y,2)\le h_0(0)+\mathcal{L}(0,0;y,2)
\end{equation*}
if $|x|\ge \mathfrak{C}''$
for some $\mathfrak{C}''>0$ satisfying $\mathbb{E}[a^{\mathfrak{C}''^{d}}]<\infty$ for some $a>1, d>0$. Thus, there is some random $N\in \N$ satisfying $\mathbb{E}[C_{y_0, h_0}^{N^{d}}]<\infty$ for some $C_{y_0, h_0}>1$ and $d>0$ such that almost surely
\begin{equation*}
\mathfrak{h}_2(y) = \displaystyle\max_{x\in [-N, N]}(h_0(x) + \mathcal{L}(x,0;y,2))\,,\qquad \text{ for all } y\in [-y_0, y_0]\,.
\end{equation*}

Now, we have that $\mathfrak{h}(\cdot)$ satisfies the following almost sure estimates for $y\in\R$
\begin{equation*}
\mathfrak{h}(y)\le \displaystyle\max_{x\in [-N,N]}(h(x)-x^2+2xy+\mathfrak{C}'+c\log^{2/3}(2+|x|+|y|))-y^2
\end{equation*}
\begin{equation*}
\le c_{h_0} + \mathfrak{C}' + 2|y|N + \displaystyle\max_{x\in [-N,N]}(-|x|+c\log(1+|x|))-y^2+c\log(1+|y|)
\end{equation*}
\begin{equation*}
\le c_{h_0} + \mathfrak{C}' -y^2+c|y|N
\end{equation*}
for some constants $c, c_{h_0}>0$ (changing from line to line). Arguing as before, we have that for $y\in [-y_0, y_0]$
\begin{equation*}
\mathfrak{h}(x) + \mathcal{S}(x,y)\le \mathfrak{h}(0)+\mathcal{S}(0,y)
\end{equation*}
for all $|x|\ge N'$
for some $N'$ satisfying $\mathbb{E}[C_{y_0, h_0}^{N'^{d}}]<\infty$ for some $C_{y_0, h_0}>1$ and $d>0$. 

Summarising, we have that
\begin{align}\label{eq: KPZ fixed point loc}
    \begin{cases}
        & \mathfrak{h}_2(y) = \displaystyle\max_{x\in [-N', N']}(\mathfrak{h}(x) + \mathcal{S}(x,y))\,,\qquad \text{ for all } y\in [-y_0, y_0]\,,\\
        & \mathfrak{h}(x) = \displaystyle\max_{z\in [-N, N]}(h_0(z) + \mathcal{S}'(z,x))\,,\qquad \text{ for all } z\in \R\,,
    \end{cases}
\end{align}
for some $N, N'$ satisfying $\mathbb{E}[C_{y_0, h_0}^{N'^{d}}+C_{y_0, h_0}^{N^{d}}]<\infty$ for some $C_{y_0, h_0}>1, d>0$ and independent Airy sheets $\mathcal{S}, \mathcal{S}'$. Notice that one can use \emph{any} $M\ge N, N'$ in their place. Furthermore, note that $\mathfrak{h}$ is a continuous, real-valued function on $\R$.

Now, fix some $A\subseteq \mathscr{C}_{0,*}([\ell, r];\R)$ Borel measurable. We can thus estimate for all $n\ge 1$
\begin{equation*}
\PP(\mathfrak{h}_2(\cdot)-\mathfrak{h}_2(\ell)\in A)\le \PP(\mathfrak{h}_2(\cdot)-\mathfrak{h}_2(\ell)\in A, N+N'\le n)+\PP(N+N'\ge n)\,,
\end{equation*}
effectively localising the support of the initial data in the first term. We can now re-express
\begin{equation*}
\PP(\mathfrak{h}_2(\cdot)-\mathfrak{h}_2(\ell)\in A)\le \PP(\mathfrak{h}^n_2(\cdot+n+1-\ell)-\mathfrak{h}^n_2(n+1)\in A, N+N'\le n)+\PP(N+N'\ge n)\,,
\end{equation*}
where on the event $N+N'\le n$
\begin{equation*}
\mathfrak{h}^n_2(\cdot) = \displaystyle\max_{x\in [-n, n]}(\mathfrak{h}(x) + \mathcal{S}(x,\cdot+\ell-n-1))  = \displaystyle\max_{x\in [1,2n+1-\ell]}(\mathfrak{h}(x+\ell-n-1) + \mathcal{S}(x,\cdot))
\end{equation*}
by the skew-symmetry of the Airy sheet. Note that $\mathfrak{h}$ is independent from $\mathcal{S}$ so the last distributional equality (skew-symmetry) is valid almost surely. Note also, on the event $\{N+N'\le n\}$, the initial data $\mathfrak{h}$ satisfies 
\begin{equation*}
\mathfrak{h}(\cdot) = \displaystyle\max_{x\in [-n, n]}(h_0(x) + \mathcal{S}'(x,\cdot))
\end{equation*}
where the latter is \emph{independent} of $\mathcal{S}$. Now, using Lemma \ref{lemma: Airy sheet variational formula} and Theorem~\ref{thm: intercept tail bound}, we have the estimates
\begin{align*}
    \begin{array}{ll}
         \PP(\mathfrak{h}_2(\cdot)-\mathfrak{h}_2(\ell)\in A)&\le \PP(\displaystyle\max_{1\le k \le m} (G^{\mathfrak{h}}_k+\mathcal{A}[(0,k)\to (\cdot+n+1-\ell, 1)])\\
         &-\displaystyle\max_{1\le k \le m} (G^{\mathfrak{h}}_k+\mathcal{A}[(0,k)\to (n+1, 1)])\in A)\\
         &+\PP(L_0\ge m) + \PP(N+N'\ge n)\,, \\
    \end{array}
\end{align*}
where $\mathcal{A}$ is a parabolic Airy line ensemble independent from $\mathfrak{h}$ and $L_0 = \pi[\ceil{2n+1-\ell}, \ceil{r-\ell+n+1}](0)$ (see Definition \ref{def: semi-inf geo}) and
\begin{equation*}
G^{\mathfrak{h}}_k = \displaystyle\max_{x\in [-n,n]}(\mathfrak{h}(x+\ell-n-1) + \mathcal{A}[x\to (0,k)])\,,\qquad \text{ for all } k\ge 1\,,
\end{equation*}
which we can further estimate as in \eqref{lemma: bdry data trunc} in terms of gaps between levels in the Airy line ensemble at the origin.

Now, proceeding as in the proof of Corollary \ref{cor: KPZ log comparison BM} and using the above discussion, we can now estimate for all $n, m \ge 1$ and some universal $\theta, \eta, d >0$
\begin{align*}
    \PP(\mathfrak{h}_2(\cdot)-\mathfrak{h}_2(\ell)\in A) &\le  \displaystyle O_{y_0, K}\bigg(\exp(d_{\ell, r} (n^\theta + m^\theta))\cdot \mu(A)^{\eta}+ \mathrm{e}^{-d_{y_0} m} +  \PP(L_0\geq m)\bigg)\\
    &+\PP(N+N'\ge n)\,,
\end{align*}
where $\mu(\cdot)$ denotes the rate two Wiener measure on $[\ell, r]$. Optimising over $m, n \in \N$ yields the result.
\end{proof}

\begin{remark}
    Inspecting the proof of Theorem \ref{thm: KPZ reg finitary}, we see the rate function in \eqref{eq: brown reg fin rate fn} is \textbf{uniform} in the initial data with `max-plus' supports contained in some fixed compact set.
\end{remark}

\section{Applications}\label{sec: applications}

We now turn to some applications of the quantitative Brownian regularity of the spatial increments of the KPZ fixed point, Theorem \ref{thm: KPZ reg finitary}. We note the statements below are uniform when we impose the additional assumption that the initial data have `max-plus' supports contained in a \textbf{fixed compact set}.
We start with a one-sided large deviation inequality for spatial increments of the KPZ fixed point on compacts. We refer the reader to \cite{deuschel1989large} for the precise definition of a Large Deviation Principle.

\begin{corollary}\label{cor: ldp one sided}
    Fix $t>0$, $a> 0$ and a bounded set $I\subseteq \R$. Consider the rate function
    \begin{equation*}
      I(\omega) \stackrel{\mathrm{def}}{=}\displaystyle\int_{[0, \sup I -\inf I]}|\dot{\omega}(t)|^2\diff t\,, \quad \omega \in H
    \end{equation*}
    where we denote the Cameron-Martin space by 
    \[
    H \equiv \mathscr{C}_0([0, \sup I -\inf I ])\cap W^{1,2}([0, \sup I -\inf I ])\,,
    \]
    where $W^{1,2}$ denotes the (Sobolev) space of square integrable functions with weak derivative in $L^2$.
    Then, there exists universal $0<r<1$ and $t$-dependent constants $C_t>0$ such that for all $F\subseteq \mathscr{C}_0([0, \sup I -\inf I])$ closed and finitary initial data $h_0:\R\to \R\cup\{-\infty\}$, the laws $\nu^{h_0}_\varepsilon$ of the increment processes $\sqrt{\varepsilon}(\mathfrak{h}_t(\cdot + \inf I) - \mathfrak{h}_t(\inf I))$, where $\mathfrak{h}_t(\cdot), t\ge 0$ is the KPZ fixed point started from $h_0$, as defined in \eqref{eq: KPZ fixed point}, satisfy the estimates
    \begin{equation*}
    \displaystyle\limsup_{\varepsilon\to 0}\varepsilon^r\log\nu^{h_0}_\varepsilon (F) \le -C_t\left(\inf_{\omega\in F}I(\omega)\right)^r = -\inf_{\omega\in F}\left(C_t\cdot I^r(\omega)\right)\,.
    \end{equation*}
\end{corollary}
\begin{proof}
    Apply Schilder's Theorem~and the quantitative Brownian regularity from Theorem~\ref{thm: KPZ reg finitary}.
\end{proof}
\color{black}

Quantitative Brownian regularity of the KPZ fixed point also has implications for the Radon-Nikodym derivative of the KPZ fixed point. In particular, one has for any Borel  $B\subseteq (0,\infty)$,
\begin{equation*}
\mathbb{E}[X\cdot \mathbf{1}_B]\le f(\PP(X\in B))\le c \mathrm{e}^{-d\log^r 1/\PP(X\in B)}\,.
\end{equation*}

We now prove a Corollary which states that the Radon-Nikodym derivatives of the spatial increments of the KPZ fixed point on compacts against the rate two Wiener measure are uniformly (in the initial data) integrable.
\begin{corollary}\label{cor: uniform integrability KPZ fixed point increments}
   Fix $t>0$ and a bounded set $I\subseteq \R$. Then, the family of Radon-Nikodym derivatives of the laws of the increment processes $\mathfrak{h}_t(\cdot + \inf I) - \mathfrak{h}_t(\inf I)$, where $\mathfrak{h}_t(\cdot), t\ge 0$ is the KPZ fixed point as defined in \eqref{eq: KPZ fixed point} and $h_0$ is $t-$finitary, $X^{h_0}_I$, against the rate two Wiener measure are uniformly integrable. 
\end{corollary}

\begin{proof}
    By $1:2:3$ scaling, one can without loss of generality take $t=1$. Then Theorem~\ref{thm: KPZ reg finitary} gives the existence of a universal rate function $f$ such that for any $A\in \mathcal{B}([0,\infty))$, 
    \begin{equation*}
    \displaystyle\sup_{h_0\,,1-\text{finitary }}\mathbb{E}[X^{h_0}_I \cdot \mathbf{1}(X^{h_0}_I\in A)]\le f(\PP(X^{h_0}_I\in A))\,.
    \end{equation*}
    In particular, for any $M\ge 0$, using the unit expectation of the Radon-Nikodym derivatives, $\mathbb{E}X^{h_0}_I = 1$,
    one has
    \begin{equation*}
    \displaystyle\sup_{h_0\,1-\text{finitary }}\mathbb{E}[X^{h_0}_I \cdot \mathbf{1}(|X^{h_0}_I|\ge M)]\le f(|\PP(X^{h_0}_I|\ge M))\le \displaystyle\max_{0\le s \le 1/M}f(s)\to 0 ,\quad\text{as } M\to \infty\,.
    \end{equation*}
\end{proof}

The above improvement on the tails can also be converted into an integrability statement for the Radon-Nikodym derivative itself, which is the content of the next proposition.

\begin{corollary}\label{cor: interpolation orlicz}
    Fix $t>0$, a bounded set $I\subseteq \R$ and $t-$finitary initial data $h_0:\R\to \R\cup\{-\infty\}$. Let $X^{h_0}$ denote the Radon-Nikodym derivative of the law of the increments $\mathfrak{h}_t(\cdot + \inf I) - \mathfrak{h}_t(\inf I)$, where $\mathfrak{h}_t(\cdot), t\ge 0$ is the KPZ fixed point started from $h_0$, as defined in \eqref{eq: KPZ fixed point} against the rate two Wiener measure.
    
    Then we have that there exists an explicit (in terms of the rate function) and universal (in terms of $h_0$) function,
    \begin{equation*}
    \Phi\text{ convex}\,, \Phi(0) = 0\,, \lim_{x\to \infty}\frac{\Phi(x)}{x} = \infty\,, \ \ \mbox {such that}
    \end{equation*}
    \begin{equation*}
    \mathbb{E}[\Phi(X^{h_0})]<\infty\,.
    \end{equation*}

    In particular, there exist $d_{t, I, h_0} > 0$ and universal $r \in (0, 1)$ such that
    \begin{equation*}
    \mathbb{E}[X^{h_0}\cdot \exp(d_{t, I, h_0}\log^{r} (X^{h_0}\lor 1))]<\infty\,.
    \end{equation*}
    In particular, the Radon-Nikodym derivatives of spatial increments of the KPZ fixed point against Brownian motion on compacts have \emph{finite entropy}:
    \begin{equation*}
    \mathbb{E}[X^{h_0}\cdot |\log X^{h_0}|]<\infty\,.
    \end{equation*}
\end{corollary}

\begin{remark}
    We can give $\Phi$ explicitly as a piecewise linear function with increasing slope. For any fixed time $t>0$ and bounded interval $I$, the family of Radon-Nikodym derivatives $X^{h_0}$ for $h_0$ $t$-finitary belong to the same Orlicz space
   \begin{equation*}
   L^\Phi (\mu)\stackrel{\mathrm{def}}{=} \{\xi: \mathscr{C}_{*, *}([0, \sup I - \inf I])\to \R\,: \mathbb{E}[\Phi(k|\xi|)]<\infty\,,\text{ for some } k> 0\}\subsetneq L^1(\mu)\,,
   \end{equation*}
   where $\mu$ is the rate two Wiener measure. Note $L^\Phi (\mu)$ is a Banach space equipped with the norm
   \begin{equation*}
   \norm{f}_{L^\Phi} \equiv \inf\{c\in (0, \infty): \mathbb{E}[\Phi(|\xi|/c)]\le 1\}\,.
   \end{equation*}
    
\end{remark}

\begin{proof}
By $1:2:3$ scaling, we can without loss of generality set $t = 1$. Corollary \ref{cor: uniform integrability KPZ fixed point increments} implies that one can estimate for all $t\ge t_0$, for some fixed $t_0> 0$ with $X\equiv X^{h_0}$
\begin{equation*}
\mathbb{E}[X\cdot \mathbf{1}(X\ge t)]\le c_{I, h_0}\mathrm{e}^{-d_{I, h_0}\log^r t}\,,
\end{equation*}
for positive $c_{I, h_0}, d_{I, h_0}, r> 0$ (which by Theorem~\ref{thm: KPZ reg finitary} do not depend on $h_0$). We henceforth drop dependence of $h_0$ and $I$ for ease of notation.

Now, observe there exist constants $c', d', r'>0$ such that with the sequence $t_0=0$ and $t_j = c'\exp(d'\log^{r'}(j))$, $j\ge 1$ we have
\begin{equation*}
\mathbb{E}[X\cdot \mathbf{1}(X\ge t_j)]\le \frac{1}{j^3}\,,\qquad j\ge 1\,.
\end{equation*}
Consider the continuous function
\begin{align*}
\Phi(x) \stackrel{\mathrm{def}}{=} \begin{cases}
    &0\,,\hfill\qquad x=0\;\\
    &\Phi(t_j) + (j+1)(x-t_j)\,,\qquad x\in [t_j, t_{j+1}]\,,j\ge 0\,.
\end{cases}
\end{align*}
Note one has for all $j\ge 0$, $\Phi(x) \le (j+1)x$, $x\in [t_j, t_{j+1}]$. Moreover, $\Phi$ is clearly convex and almost everywhere differentiable with $\Phi'(x) = (j+1)$ on $(t_j, t_{j+1})$ which tends to $\infty$ as $j\to \infty$, thus, we have the super linear growth
\begin{equation*}
\lim_{x\to \infty}\frac{\Phi(x)}{x} = \infty\,.
\end{equation*}
We now estimate
\begin{align*}
\mathbb{E}[\Phi(X)]&= \sum_{j\ge 0}\mathbb{E}[\Phi(X)\cdot\mathbf{1}(t_j \le X< t_{j+1})]\le \sum_{j\ge 0}(j+1)\cdot\mathbb{E}[X\cdot\mathbf{1}(X\ge t_j)]\le \sum_{j\ge 0}\frac{(j+1)}{j^3}<\infty\,.
\end{align*}
Now, observe that for all $j\ge 3$, (constants may change from line to line)
\begin{align*}
    \Phi(j)  &= \sum_{k = 0}^{j-1} (k+1)(t_{k+1}-t_k) = jt_{j}- \sum_{k = 0}^{j-1}t_k\,.
\end{align*}
Observe that by monotonicity of the $t_k$,
\begin{align*}
    \sum_{k = 0}^{j-1}t_k &= \sum_{k = 0}^{\lfloor j/2\rfloor}t_k + \sum_{k = \lfloor j/2\rfloor+1}^{j-1}t_k \le j/2\cdot t_{\lfloor j/2\rfloor} + j/2 \cdot t_{j}\,.
\end{align*}
We also estimate 
\begin{align*}
    \exp(d'\log^{r'}(j/2))&= \exp(d'\log^{r'}(j))\cdot \exp(d'\log^{r'}(j/2)-d'\log^{r'}(j))\\
    &=\exp(d'\log^{r'}(j))\cdot \exp\bigg(-d'r'\int^{j}_{j/2}\frac{\log^{r'-1}x}{x}\diff x\bigg)\\
    &\le \exp(d'\log^{r'}(j))\cdot \exp\bigg(-d'r'/2\log^{r'-1}(j/2)\bigg)\,, 
\end{align*}
and
\begin{align*}
    \exp(d'\log^{r'}(j+1))&= \exp(d'\log^{r'}(j))\cdot \exp(d'\log^{r'}(j+1)-d'\log^{r'}(j))\\
    &=\exp(d'\log^{r'}(j))\cdot \exp\bigg(d'r'\int^{j+1}_j\frac{\log^{r'-1}x}{x}\diff x\bigg)\,.
\end{align*}
Since
\begin{align*}
    \sup_{j\ge 3}\exp\bigg(d'r'\int^{j+1}_j\frac{\log^{r'-1}x}{x}\diff x\bigg)&\le C
\end{align*}
for some constant $C> 0$, we have (constants may change from line to line)
\begin{align*}
    t_{j+1} = \exp(d'\log^{r'}(j+1))&\le C\cdot \exp(d'\log^{r'}(j)) = Ct_j\,.
\end{align*}
Now, for $x\in [t_j\lor \mathrm{e}^2, t_{j+1}\lor \mathrm{e}^2]$, we have
\begin{align*}
    \Phi(x) = \Phi(j)+(j+1)(x-t_j)&\ge  jt_j- \sum_{k = 0}^{j-1}t_k\ge jt_j - (j/2\cdot t_{\lfloor j/2\rfloor} + j/2 \cdot t_{j})\\
    &\ge jt_j/2\left(1 - \exp\bigg(-d'r'/2\log^{r'-1}(j/2)\bigg)\right)\\
    &\ge C'jt_j \ge C'jt_{j+1}\ge C' x \mathrm{e}^{c\log^r x}\,,
\end{align*}
for some $C', c, r > 0$, concluding the proof.
\end{proof}
\color{black}
\section{Future directions}\label{sec: outlook}

In this section, we discuss possible ways of strengthening the quantitative Brownian comparison of the KPZ fixed point on compacts.

A key to improving Brownian regularity is to strengthen the estimates satisfied by the truncated versions of the KPZ fixed point. This would include improving the inverse acceptance probability estimates as well as the Radon-Nikodym derivative bounds of inhomogeneous BLPP. Next, a refinement of the picture of geodesic geometry in the Airy line ensemble, in particular improving tail bounds on semi-infinite geodesic intercepts and finer control over geodesic coalescence events on Brownian melons would also help improve Brownian regularity.

In particular, for any given $x>0$, if one could strengthen the comparison in Theorem~\ref{thm: Airy LPP deviation} by showing that for every $\varepsilon > 0$
$$
\PP \left(k^{1/6}|\mathcal{A}[(0, k) \rightarrow (x,1)]-{2\sqrt{2kx}}| > \varepsilon\right) \stackrel{\varepsilon \to 0}{\longrightarrow} 0  \qquad k\ge 1\,,
$$
then one would obtain improved tail bounds for $L_0$, compared to those in Theorem~\ref{thm: intercept tail bound}, possibly even showing that for all $\varepsilon > 0$, $L_0$ satisfies the tail bounds
    \begin{equation*}
        \displaystyle\sup_{j\in \N} \mathrm{e}^{j^{(3-\varepsilon)}}\cdot \PP(L_0\geq j)<\infty\,.
    \end{equation*}

Ultimately, the fruits of such an endeavour would be to obtain something along the lines of the following. 
\begin{itemize}
    \item[1.] There exist some $p>1$, $d\ge 1,r>0$ such that we have the estimate for the finite depth truncations $H_m$, $m\ge 1$ as defined in (\ref{eq: finite depth trun}) and $A$ Borel (depending only on increments on some bounded set),
    \begin{equation*}
    \PP(H_m(\cdot)\in A) \le c_p \big( \mathrm{e}^{m^{d}}\cdot \mu(A)^{1-1/p} + 
     \mathrm{e}^{-m^{r}}\big )\,,\qquad m\ge 1\,,
    \end{equation*}
    with $c_p>0$ independent of $m\in \N$, where $\mu$ denotes the law of a rate two Brownian motion;
    \item[2.] $L_0$ satisfies some tail bound
    \begin{equation*}
        \displaystyle\sup_{j\in \N} \mathrm{e}^{j^r}\cdot \PP(L_0\geq j)<\infty\,,
    \end{equation*}
     for the same $r>0$.
\end{itemize}
     
With $r > d$, one can convert the above finite depth bounds to a bound on spatial increments of the KPZ fixed point of the form 
    \begin{equation*}
     \PP(\mathfrak{h}(\cdot + 1)-\mathfrak{h}(1)\in A) \le c'_t  
     \cdot \mu(A)^{1-1/t}
    \end{equation*}
    for any $t\in (1,p)$, for some positive $c'_t > 0$ independent of $m\in \N$. In other words, the Radon-Nikodym derivative of the increment process of the KPZ fixed point $\mathfrak{h}(\cdot + 1)-\mathfrak{h}(1)$ is in $L^{p-}(\mu)$ on compacts. We believe that one can achieve $r = 3$ from transversal fluctuation of geodesics in discrete environments. Moreover, we also expect to have $d<3$ from our already established inhomogeneous Brownian LPP estimates, in addition to an improvement in estimating inverse acceptance probabilities. This would give $r>d$, as required for $L^{p-}$-regularity.

\medskip\medskip
\noindent\textbf{Acknowledgement.} SS would like to thank B\'alint Vir\'ag for some initial helpful discussions.

\bibliographystyle{alpha}
\bibliography{bibliography}

\end{document}